\newcommand\OurRing{\mathcal R}
\newcommand\mfacZ{\gamma^1_0}
\newcommand\mfac{\gamma^1}
\newcommand\Hup{\mathcal{H}^{\wedge}}
\newcommand\state{\kappa}
\tikzset{cdlabel/.style={above,sloped,
    execute at begin node=$\scriptstyle,execute at end node=$}}
\tikzset{algarrow/.style={->, thick}}
\tikzset{blgarrow/.style={->, thick}}
\tikzset{clgarrow/.style={->, thick}}
\tikzset{tensoralgarrow/.style={double, double equal sign distance, -implies}}
\tikzset{tensorblgarrow/.style={double, double equal sign distance, -implies}}
\tikzset{tensorclgarrow/.style={double, double equal sign distance, -implies}}
\tikzset{modarrow/.style={->, dashed}}
\tikzset{othmodarrow/.style={->, thick}}
\tikzset{Amodar/.style={->, dashed}}
\tikzset{Dmodar/.style={->, dashed}}
\newcommand\HD{\mathcal H}
\newcommand\CFAa{\widehat{\mathrm{CFA}}}
\newcommand\CFDa{\widehat{\mathrm{CFD}}}
\newcommand\Min{\GenMin}
\newcommand\GenMin{\mho}
\newcommand\KHm{H^-}
\newcommand\KHwz{\KH}
\newcommand\KHa{\widehat H}
\newcommand\KC{{\mathcal C}}
\newcommand\KH{{H}}
\newcommand\Diag{\mathcal D}
\newcommand\States{\mathfrak{S}}
\newcommand\HFKm{\mathrm{HFK}^-}
\def\endproof{\relax\ifmmode\expandafter\endproofmath\else
  \unskip\nobreak\hfil\penalty50\hskip.75em\hbox{}\nobreak\hfil\bull
  {\parfillskip=0pt \finalhyphendemerits=0 \bigbreak}\fi}
\def\endproofmath$${\eqno\bull$$\bigbreak}
\def\bull{\vbox{\hrule\hbox{\vrule\kern3pt\vbox{\kern6pt}\kern3pt\vrule}\hrule}}
\newcommand\CanonDD{\mathcal K}
\newcommand\North{\mathbf N}
\newcommand\South{\mathbf S}
\newcommand\East{\mathbf E}
\newcommand\West{\mathbf W}
\newcommand\Pos{\mathcal P}
\newcommand\HFKa{\widehat{\mathrm{HFK}}}
\newtheorem{thm}{Theorem}[section]
\newtheorem{cor}[thm]{Corollary}
\newtheorem{prop}[thm]{Proposition}
\newtheorem{defn}[thm]{Definition}
\newtheorem{example}[thm]{Example}
\numberwithin{equation}{section}
\newcommand\OneHalf{\frac{1}{2}}
\newcommand\IdempRing{\mathbf{I}}
\newcommand\Idemp[1]{\mathbf{I}_{#1}}
\newcommand\DT{\boxtimes}
\newcommand\x{\mathbf x}
\newcommand\y{\mathbf y}
\newcommand\lsup[2]{^{#1}{#2}}
\newcommand{\Ideal}{\mathcal I}
\newcommand\z{\mathbf z}
 \newcommand{\Z}{\mathbb Z}  \newcommand{\Q}{\mathbb Q} \newcommand{\R}{\mathbb R}
\newcommand{\HFa}{{\widehat {\rm {HF}}}}
\newcommand{\HFm}{{\rm {HF}^-}}
\newcommand{\CFa}{{\widehat {\rm {CF}}}}
\newcommand{\CFm}{{\rm {CF}} ^-}
\newcommand\XX{\mathbf X}
\newcommand\YY{\mathbf Y}
\newcommand\ZZ{\mathbf Z}
\newcommand\Max{\Omega}
\newcommand\Field{\mathbb F}
\DeclareMathOperator{\Hom}{Hom}
\DeclareMathOperator{\Id}{Id}
\newcommand\Alg{\mathcal A}
\newcommand\Blg{\mathcal B}
\newcommand\Ainf{{\mathcal A}_{\infty}}
\newcommand\Ainfty\Ainf
\newcommand\Zmod[1]{{\mathbb Z}/{#1}{\mathbb Z}}
\newcommand\gr{\mathbf{gr}}
\newcommand\Partition{P}
\newcommand\DuAlg{{\mathcal A}'}
\newcommand\Matching{M}
\renewenvironment{proof}[1][\proofname]{\par
\pushQED{\qed}%
\normalfont \topsep6\p@\@plus6\p@\relax
\trivlist
\item\relax
{\bf#1\@addpunct{.}}\hspace\labelsep\ignorespaces
}{%
\popQED\endtrivlist\@endpefalse
}
\begin{document}
\title{An overview of knot Floer homology}

\author[Peter S. Ozsv\'ath]{Peter Ozsv\'ath}
\thanks {PSO was supported by NSF grant number DMS-1405114}
\address {Department of Mathematics, Princeton University\\ Princeton, New Jersey 08544} 
\email {petero@math.princeton.edu}

\author[Zolt{\'a}n Szab{\'o}]{Zolt{\'a}n Szab{\'o}}
\thanks{ZSz was supported by NSF grant number DMS-1606571}
\address{Department of Mathematics, Princeton University\\ Princeton, New Jersey 08544}
\email {szabo@math.princeton.edu}

\maketitle

\newcommand\Cobar{\mathrm{Cobar}}
\newcommand\Interior[1]{#1^\circ}
\newcommand\gen[1]{\gamma_{#1}}
\newcommand\NN{\mathbf N}
\newcommand\WW{\mathbf W}
\newcommand\EE{\mathbf E}
\renewcommand\SS{\mathbf S}

\newcommand\dbar{\overline\partial}
\newcommand\gammas{\boldsymbol{\gamma}}
\newcommand\alphas{\boldsymbol{\alpha}}
\newcommand\betas{\boldsymbol{\beta}}
\newcommand\Ta{{\mathbb T}_{\alpha}}
\newcommand\Tb{{\mathbb T}_{\beta}}
\newcommand\Sym{\mathrm{Sym}}
\newcommand\CDisk{\mathbb D}
\newcommand\dm{\partial^-}
\newcommand\da{\widehat\partial}
\newcommand\Mas{\mu}
\newcommand\ModFlow{\mathcal{M}}
\newcommand\CFKa{\widehat{\mathrm{CFK}}}
\newcommand\CFKwz{\mathrm{CFK}}
\newcommand\HFKwz{\mathrm{HFK}}
\newcommand\CFKm{{\mathrm{CFK}}^-}
\newcommand\uCFK{{\mathrm{CFK}}'}
\newcommand\tHFK{\mathrm{tHFK}}
\newcommand\tCFK{\mathrm{tCFK}}
\newcommand\uHFK{{\mathrm{HFK}}'}
\newcommand\Unknot{\mathcal{U}}
\newcommand\orL{\vec{L}}
\newcommand\uda{\partial_K'}
\newcommand\dt{\partial_K^t}
\newcommand\mirror{\mathfrak{m}}
\newcommand\gTopSlice{g^{\mathrm{top}}_4}
\newcommand\Conc{\mathcal{C}}
\newcommand\ConcTS{\mathcal{C}_{TS}}
\newcommand\ConcTop{\mathcal{C}^{\mathrm{top}}}
\newcommand\ws{\mathbf{w}}
\newcommand\zs{\mathbf{z}}

Knot Floer homology is an invariant for knots discovered by the
authors~\cite{Knots} and, independently, Jacob
Rasmussen~\cite{RasmussenThesis}.  The discovery of this invariant
grew naturally out of studying how a certain three-manifold invariant,
{\em Heegaard Floer homology}~\cite{HolDisk}, changes as the three-manifold undergoes
Dehn surgery along a knot. Since its original definition, thanks  to
the contributions of many researchers, knot Floer homology 
has emerged as a useful tool for studying knots in its own right.
We give here a few selected highlights of this theory,
and then move on to some new algebraic developments in the
computation of knot Floer homology.

\section{Motivation for the construction}
Since the work of Donaldson, gauge theory has emerged as the central
tool for understanding differential topology in dimension
four. Donaldson's pioneering work from the 1980's used the moduli
space of solutions to the anti-self-dual Yang-Mills equations -- or
``instantons'' -- to construct diffeomorphism
invariants of four-dimensional manifolds~\cite{Donaldson,DonaldsonPolynomials}.  Donaldson used these invariants to
discover completely unexpected phenomena in four-dimensional topology,
including a deep connection between the smooth topology of algebraic
surfaces and their algebraic geometry, leading to a number of
breakthroughs in the
field~\cite{Chambers,Kotschick,FriedmanMorgan,FintushelSternDefinite,GompfMrowka,Szabo,DonaldsonKronheimer,KMPolyStruct}.

A corresponding invariant for three-dimensional manifolds was
introduced by Andreas Floer, {\em instanton Floer homology}, which is
the homology group of a chain complex whose generators are $SU(2)$
representations of the fundamental group of the three-manifold $Y$
(modulo conjugation), and whose differential counts instantons on
$\R\times Y$; see~\cite{Floer,DonaldsonFloer}. 
Floer homology can be used as a tool for computing
Donaldson's invariants~\cite{FintSternK3}.

Floer formulated his instanton homology theory as a kind of
infinite-dimensional Morse theory, akin to his earlier {\em Lagrangian
  Floer homology}, which is an invariant for a symplectic manifold,
equipped with a pair of Lagrangian submanifolds~\cite{FloerLagrangian}; see also~\cite{FOOO}. 
In~\cite{AtiyahFloer}, Atiyah proposed a relationship between these
two invariants, which is now known as the ``Atiyah-Floer
conjecture''. The starting point of this conjecture is a
three-manifold equipped with a Heegaard splitting.  The ``character
variety'' of the Heegaard surface $\Sigma$, which is the space of
representations of $\pi_1(\Sigma)$ into $SU(2)$ modulo conjugation, is
equipped with a pair of Lagrangian subspaces, the spaces of
representations that extend over each handlebody. The Atiyah-Floer
conjecture states that the Lagrangian Floer homology of these
character varieties should agree with the instanton homology of the
underlying three-manifold $Y$; compare~\cite{TaubesCasson}. This statement is still a little vague:
$SU(2)$ instanton homology is defined for three-manifolds with
$H_1(Y;\Z)=0$, and the spaces involved on the symplectic side are
singular. Nonetheless, the conjectured relationship has spurred a
great deal of mathematical activity~\cite{DostSal,Wehrheim}.

In 1994, four-manifold topology was revolutionized by the introduction
of the Seiberg-Witten equations, a new partial differential equation
coming from physics~\cite{Witten}. The moduli spaces of solutions to
these equations could be used to construct invariants of smooth
four-manifolds, just as the anti-self-dual equations are used in
Donaldson's theory. Many theorems  proved earlier using
Donaldson's invariants had easier proofs and generalizations using the
newly introduced Seiberg-Witten invariants~\cite{DonaldsonSW}.  The Seiberg-Witten
invariants also elucidated the relationship between the differential
topology of symplectic manifolds and their symplectic properties,
resulting in Clifford Taubes' celebrated proof that identified the
Gromov-Witten invariants of a symplectic manifold with their
Seiberg-Witten invariants~\cite{TaubesSympI,TaubesSympII,TauSWGromov}.

Considerable work went into formulating a three-dimensional analogue
of the Seiberg-Witten invariants. A definitive construction was given
by Peter Kronheimer and Tomasz Mrowka in their monograph~\cite{KMbook};
see also~\cite{MarcolliWang,Froyshov,Manolescu}.

Heegaard Floer homology~\cite{HolDisk} grew out of our attempts to
concretely understand the geometric underpinnings of Seiberg-Witten
theory.  A motivating problem was to find the analogue of the
``Atiyah-Floer conjecture'': what Lagrangian Floer construction could
possibly recapture the Seiberg-Witten invariants for three-manifolds?
A clue was offered by the the following observation: the space of
stationary solutions to the (suitably perturbed) Seiberg-Witten
equations on $\R\times\Sigma$ is identified the moduli space of
``vortices'' on $\Sigma$ with some charge $d$, which in turn, by early
work of Taubes, is identified with the $d$-fold symmetric product of
$\Sigma$, the space of unordered $d$-tuples of points in
$\Sigma$, denoted $\Sym^d(\Sigma)$.

It was proved in~\cite{HolDisk} that Heegaard Floer homology is a
well-defined three-manifold invariant, enjoying many of the properties
of Seiberg-Witten theory. Although it was designed to be isomorphic to
construction in Seiberg-Witten theory, the conjectural equivalence of
these two theories was verified many years after their formulation, in
the work of Cagatay Kutluhan, Yi-Jen Lee, Taubes~\cite{KLTI} and
Vincent Colin, Paolo Ghiggini, and Ko Honda~\cite{CGH}.

{\bf Acknowledgements.}  We would like to thank Andr{\'a}s Stipsicz
for his suggestions on an early draft of this paper.  The work of
Simon Donaldson has had a great impact on our research. Both of our PhD
theses were based on computing Donaldson's invariant for
four-manifolds; and indeed his theory has served as an inspiration to
us ever since!

\section{Statement of the symplectic constructions}

We sketch now the construction of Heegaard Floer homology, and its
corresponding knot invariant. Before doing this, we recall some
topological preliminaries.

Let $\Sigma$ be a surface of genus $g$. A {\em complete set of
  attaching circles for $\Sigma$} is a $g$-tuple of pairwise disjoint,
  homologically linearly independent simple, closed curves.  A
  complete set of attaching circles specifies a handlebody
  $U_{\gammas}$ whose boundary is identified with $\Sigma$, so that  the
  attaching circles bound disjoint, embedded disks in 
  $U_{\gammas}$.

A {\em Heegaard splitting} of a connected, closed, oriented three-manifold $Y$ is
a decomposition of $Y$ as the union of two handlebodies, glued along
their boundary.  Combinatorially, a Heegaard splitting is specified
by a {\em Heegaard diagram}, which consists of a triple $(\Sigma,\alphas,\betas)$, where $\Sigma$ is an oriented surface,
$\alphas=\{\alpha_1,\dots,\alpha_g\}$ and
$\betas=\{\beta_1,\dots,\beta_g\}$ are two complete sets of attaching
circles for $\Sigma$.  Heegaard diagrams can be thought of from the
perspective of Morse theory~\cite{MorseTheory,Milnor}, as
follows. If $Y$ is equipped with a self-indexing Morse function $f$
and a gradient-like vector field $v$, we can let $\Sigma$ be
$f^{-1}(3/2)$, and $\alphas$ is the locus of points in $\Sigma$ that
flow out of the index one critical points under $v$, and $\betas$ is
the locus of points in $\Sigma$ that flow into the the index two
critical points.  We will typically work with {\em pointed Heegaard
diagrams}, which consist of data $\HD=(\Sigma,\alphas,\betas,w)$, where
$(\Sigma,\alphas,\betas)$ is a Heegaard diagram, and $w\in \Sigma$ is
an auxiliary basepoint in $\Sigma$ that is disjoint from all the
$\alpha_i$ and the $\beta_j$.  (See Figure~\ref{fig:StandardDiagram} for a 
somewhat complicated Heegaard diagram for $S^3$, ignoring the extra basepoint labelled $z$.)

Inside $\Sym^g(\Sigma)$, there is a pair of $g$-dimensional tori
\[ \Ta=\alpha_1\times\dots\times\alpha_g\qquad{\text{and}}\qquad
\Tb=\beta_1\times\dots\times \beta_g;\] e.g. $\Ta$ is the space of
$g$-tuples of points in $\Sigma$, so that each point lies on some
$\alpha_i$ and no two points lie on the same $\alpha_i$.  
The basepoint gives rise to a real codimension two submanifold
$V_w\subset\Sym^g(\Sigma)$, consisting of those $g$-tuples of points
$\x$ that include the point $w$.

The intersection points $\Ta\cap\Tb$ are called {\em Heegaard states}
for the diagram $\HD$, and they are denoted $\States(\HD)$.
Explicitly, if we think of the $\alpha$- and $\beta$-circles as numbered by $\{1,\dots,g\}$,
then Heegaard states are partitioned according to permutations $\sigma$ on $\{1,\dots,g\}$.
The Heegaard states of type $\sigma$ correspond to  points in the Cartesian product
\[ (\alpha_1\cap \beta_{\sigma(1)})\times\dots\times(\alpha_g\cap\beta_{\sigma(g)}).\]

A complex structure on $\Sigma$ naturally induces a complex structure
on the $g$-fold symmetric product $\Sym^g(\Sigma)$. In fact, the
$g$-fold symmetric product $\Sym^g(\Sigma)$ can be given a K{\"a}hler
structure so that the tori $\Ta$ and $\Tb$ are
Lagrangian~\cite{Perutz}.  Versions of the Heegaard Floer homology of $Y$
correspond to variants of Lagrangian Floer homology for $\Ta$ and
$\Tb$ in $\Sym^g(\Sigma)$, which depend on how one counts
pseudo-holomorphic disks which interact with the subspace $V_w$.

Choosing an almost-complex structure compatible with the symplectic
structure on $\Sym^g(\Sigma)$, one can consider pseudo-holomorphic
disks, as introduced by Gromov~\cite{Gromov}.  For fixed Heegaard states $\x$ and $\y$,
the pseudo-holomorphic
disks in $\Sym^g(\Sigma)$ connecting $\x$ to $\y$
can be organized into homotopy
classes of maps from the unit disk $\CDisk$ in the complex plane
to $\Sym^g(\Sigma)$, $u\colon \CDisk\to \Sym^g(\Sigma)$,
satisfying the following boundary conditions:
$u$ maps $-i$ to $\x$, $i$ to $\y$, and $x+iy=z\in
\partial\CDisk$ with $x\geq 0$ to $\Ta$ and $x\leq 0$ to $\Tb$. We
denote the space of homotopy classes of such maps by $\pi_2(\x,\y)$.
Since $w$ is disjoint from the $\alpha_i$ and $\beta_j$, there is a
well-defined map $n_w\colon \pi_2(\x,\y)\to \Z$ which is given as the
algebraic intersection number of a generic $u$ representing
$\phi\in\pi_2(\x,\y)$ with the oriented submanifold $V_w$.
The moduli space of pseudo-holomorphic disks representing the homotopy class $\phi\in\pi_2(\x,\y)$ is denoted
$\ModFlow(\phi)$. This admits a natural action by $\R$, thought of as the holomorphic automorphisms of $\CDisk$ preserving $\pm i$.

The simplest version of Heegaard Floer homology is the homology of a
chain complex $\CFa(\HD)$, thought of a vector space over the field
$\Field$ with two elements. Generators of this chain complex are the Heegaard states, and its
differential counts pseudo-holomorphic disks that are
disjoint from $V_w$; more formally, $\CFa(\HD)$ is the vector space
generated $\States(\HD)$, equipped with the differential
\[\da (\x)=\sum_{\y\in\States}\sum_{\{\phi\in\pi_2(\x,\y)|n_w(\phi)=0, \Mas(\phi)=1\}}
\#\left(\frac{\ModFlow(\phi)}{\R}\right)\cdot  \y.\]
Here, $\Mas(\phi)$ is the Maslov index of the homotopy class $\phi$~\cite{SpecFlow,FOOO};
see~\cite{LipshitzCyl} for a very useful formulation in terms of the Heegaard diagram.
As is standard in Floer theory~\cite{FloerHoferSalamon,FOOO},
to make sense of the definition, the $\dbar$-equations
need to be perturbed suitably to make the moduli spaces transverse.
This chain complex has a refinement
$\CFm(\HD)$, which is a module over the polynomial algebra $\Field[U]$, 
and whose differential is defined by
\[ \dm(\x)=\sum_{\y\in\States}
\sum_{\{\phi\in\pi_2(\x,\y)|\Mas(\phi)=1\}}
\#\left(\frac{\ModFlow(\phi)}{\R}\right)\cdot U^{n_w(\phi)} \y.\] The
$U=0$ specialization of this chain complex is $\CFa(\HD)$.  (Both
complexes can in fact be defined over $\Z$ coefficients;
see~\cite{HolDisk}.)

The main theorem of~\cite{HolDisk} states that the homology of
$\CFm(\HD)$ (and $\CFa(\HD)$) is an invariant of the underlying
closed, oriented three-manifold $Y$ represented by $\HD$.

Heegaard Floer homology has an extension to knots $K\subset Y$ in a
three-manifold, called {\em knot Floer homology}, which was discovered
independently by Jacob Rasmussen~\cite{RasmussenThesis} and by
us~\cite{HolKnot}. For this version, start with a doubly-pointed
Heegaard diagram $\HD=(\Sigma,\alphas,\betas,w,z)$, where here the two
basepoints $w$ and $z$ in $\Sigma$ are both chosen to be disjoint from
the $\alpha_i$ and the $\beta_i$.  This data specifies an oriented
knot inside the three-manifold $Y$ specified by the Heegaard diagram
$(\Sigma,\alphas,\betas)$. The knot is constructed by the following
procedure. Connect $w$ to $z$ in $\Sigma$ by an arc $a$ that is disjoint
from the $\alpha$-curves, and push the interior of resulting arc into
the $\alpha$-handlebody; similarly, connect $w$ and $z$ by another
arc in $\Sigma$ that is disjoint from the $\beta$-curves and push the interior of that
into the $\beta$-handlebody to get $b$. The knot $K$ is obtained as $a\cup b$.
It can be oriented by the convention that $\partial a = z-w= -\partial b$. 

The simplest version of knot Floer homology is the homology of a chain
complex $\CFKa(\HD)$, once again generated by Heegaard states (in 
a doubly-pointed Heegaard diagram $\HD$ representing $K$), with differential
given by
\[\da_K (\x)=\sum_{\y\in\States}\sum_{\{\phi\in\pi_2(\x,\y)|n_w(\phi)=0=n_z(\phi), \Mas(\phi)=1\}}
\#\left(\frac{\ModFlow(\phi)}{\R} \right)\y.\] For simplicity, we hereafter restrict
attention to the case where the ambient three-manifold is $S^3$.
Dropping the requirement that $n_z(\phi)=0$ gives the chain
complex representing $\HFa(S^3)$, which is a one-dimensional vector
space.  Knot Floer complex is equipped with two gradings, the {\em
Maslov grading} and the {\em Alexander grading}, induced by functions
\[ M\colon \States(\HD)\to \Z \qquad{\text{and}}\qquad
A\colon \States(\HD)\to \Z\]
that are characterized as follows.

The function $M$ satisfies the property that if $\x$ and $\y$ are any
two Heegaard states, and $\phi\in\pi_2(\x,\y)$ is a homotopy class of
Whitney disks, then
\[ M(\x)-M(\y)=\Mas(\phi)-2n_w(\phi).\]
This specifies $M$ uniquely up to an overall additive constant.
The function $M$ induces a $\Z$-valued grading on $\CFa(S^3)$ for which the differential $\da$
drops grading by one; thus there is an induced grading on
$\HFa(S^3)\cong \Field$. The additive indeterminacy on $M$ is pinned
down by requiring that $\HFa(S^3)$ is supported in Maslov grading
equal to zero.

The function $A$ satisfies the property that if $\x$ and $\y$ are any
two Heegaard states, and $\phi\in\pi_2(\x,\y)$ is a homotopy class of
Whitney disks, then
\[ A(\x)-A(\y)=n_z(\phi)-n_w(\phi).\]
Once again, this specifies $A$ up to an overall additive constant; and
the differential $\da_K$ preserves the corresponding splitting of
$\CFKa(\HD)$ specified by the Alexander grading. 

The Maslov and Alexander functions induce a bigrading on $\CFKa(\HD)$,
\[ \CFKa(\HD)=\bigoplus_{d,s\in \Z}\CFKa_d(\HD,s),\]
where $\CFKa_d(\HD,s)$ is generated by those states $\x$ with
$M(\x)=d$ and $A(\x)=s$.  The differential satisfies
\[ \da_K\colon \CFKa_d(\HD,s)\to \CFKa_{d-1}(\HD,s),\]
and therefore the bigrading descends to homology
\[ \HFKa(\HD)=\bigoplus_{d,s\in \Z}\HFKa_d(\HD,s).\]

The bigraded chain complex $\CFKa(\HD)$ has
a {\em graded Euler characteristic}, which is a Laurent polynomial
with integral coefficients, in a formal variable $t$, defined by
\[ \chi(\CFKa(K))=
\sum_{d,s} (-1)^d \dim \CFKa_d(K,s) t^s
=\sum_{d,s} (-1)^d \dim \HFKa_d(K,s) t^s.\]
This graded Euler characteristic coincides with the
Alexander polynomial $\Delta_K(t)$:
\begin{equation}
  \label{eq:Alex}
  \chi(\CFKa(K))\doteq\Delta_K(t),
\end{equation}
where here $\doteq$ means that the two polynomials agree up to overall
factors of $t$~\cite{Knots}.
The additive indeterminacy in $A$ is eliminated by requiring that the graded
Euler characteristic is symmetric in $t$; i.e. Equation~\eqref{eq:Alex}
holds with equality in place of $\doteq$.

The information in the bigraded vector space $\HFKa(K)$ is encoded in its {\em Poincar{\'e} polynomial},
a polynomial with non-negative integral coefficients in two formal variables $q$ and $t$, defined by
\[ P_K(q,t) = \sum_{d,s} \dim \HFKa_d(K,s) q^d t^s.\]
Specializing $P_K$ to $q=-1$ gives the graded Euler characteristic; i.e.   $P_K(-1,t)=\Delta_K(t)$.

The construction described above is analytic in nature: the generators
are combinatorial, but differentials count pseudo-holomorphic
disks. Knot Floer homology has a number of different, more
computationally approachable formulations. We will return to this
point, but first, we outline some properties and applications of the
invariant.

\section{First properties}

We describe now some basic properties of knot Floer homology, contrasting them with corresponding properties
for the Alexander polynomial.

Suppose that $K_+$ and $K_-$ are two knots with a projection that
differs in exactly one crossing, as shown in the first two pictures of
Figure~\ref{fig:SkeinRelation}.  Then, we can resolve the crossing to
obtain a new oriented link with two components, the third picture in
that figure. More generally, if $\orL_+$, $\orL_-$, and $\orL_0$ are
three oriented links that differ as in that figure, we say that they form a
{\em skein triple}. The Alexander polynomial for knots can be extended
to oriented links, and that extension satisfies the following {\em
  skein relation} for any skein triple $\orL_+$, $\orL_-$, $\orL_0$:
\begin{figure} \centering \input{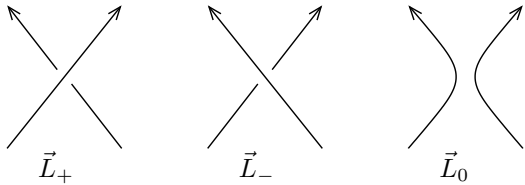}
\caption{{\bf Crossing conventions in the skein relation.}}
\label{fig:SkeinRelation}
\end{figure}
\[ \Delta_{\orL_+}(t)-\Delta_{\orL_-}(t)=(t^{1/2}-t^{-1/2})\Delta_{\orL_0}(t).\]
This relation gives a helpful inductive computational procedure for the Alexander polynomial.
In fact, it was observed by John Conway~\cite{Conway} that the Alexander polynomial (for oriented links) is uniquely characterized
by the above skein relation, and the normalization for the unknot $\Unknot$, 
which states that $\Delta_\Unknot(t)$ is the constant polynomial $1$.
Extending knot Floer homology to links, 
the skein relation has the following analogue:
\begin{thm}
  If $\orL_+$, $\orL_-$, and $\orL_0$ are three oriented links that
  fit into a skein triple, there is a corresponding exact triangle
  relating their bigraded knot Floer homologies.  When the two strands
  meeting at $\orL_+$ belong to the same component of $\orL_+$, the
  triangle has the form \[ \begin{tikzpicture}[x=2.3cm,y=48pt] \node
  at (0,0) (p) {$\HFKa(\orL_+)$} ; \node at (2,0) (n)
  {$\HFKa(\orL_-)$} ; \node at (1,-1) (z) {$\HFKa(\orL_0)$}
  ; \draw[->] (p) to node[above] {} (n) ; \draw[->] (n) to node[below]
  {} (z) ; \draw[->] (z) to node[below] {} (p) ; \end{tikzpicture} \]
  When the strands in $\orL_+$ belong to different components, there
  is a similar triangle, except that $\HFKa(\orL_0)$ is tensored with
  an appropriately graded four-dimensional bigraded vector space.
\end{thm}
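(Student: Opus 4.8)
The plan is to prove this by the holomorphic-triangles technique that underlies the surgery exact triangle in Heegaard Floer homology, adapted to the multiply-pointed setting of link Floer homology. \textbf{Step 1 (a common Heegaard diagram for the skein triple).} Using invariance of $\HFKa$ under change of diagram, it suffices to work with one convenient picture. Starting from a planar diagram of $\orL_+$, I would build a multiply-pointed Heegaard diagram $\HD_+=(\Sigma,\alphas,\betas,\ws,\zs)$ in which the distinguished crossing is supported in a small disk $D\subset\Sigma$, so that $D$ meets exactly one attaching circle $\beta_0\in\betas$, one arc of one $\alpha$-circle, and (depending on the case) one basepoint. Changing the crossing corresponds to isotoping $\beta_0$ across that basepoint, and the oriented resolution corresponds to a further, \emph{non-isotopic} modification of $\beta_0$ inside $D$. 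This produces three curves $\beta_0,\gamma_0,\delta_0$ that agree outside $D$ and sit inside $D$ in the standard ``exact triangle'' position --- pairwise meeting in the minimal number of points and cobounding a single embedded triangle --- so that replacing $\beta_0$ by $\gamma_0$ (resp.\ $\delta_0$) yields diagrams $\HD_-$, $\HD_0$ representing $\orL_-$, $\orL_0$. Consequently $\CFKa(\orL_+)\cong\CFa(\Ta,\Tb)$, $\CFKa(\orL_-)\cong\CFa(\Ta,\mathbb{T}_\gamma)$, $\CFKa(\orL_0)\cong\CFa(\Ta,\mathbb{T}_\delta)$, where all Floer-type objects are understood with the constraint $n_w(\phi)=n_z(\phi)=0$ at every basepoint. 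The local configuration in $D$ is genuinely different according to whether the two strands lie on one component or on two; this asymmetry is ultimately the source of the four-dimensional correction factor.

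\textbf{Step 2 (the maps of the triangle).} Let $\Theta_{\beta\gamma}\in\HFa(\Tb,\mathbb{T}_\gamma)$, $\Theta_{\gamma\delta}\in\HFa(\mathbb{T}_\gamma,\mathbb{T}_\delta)$, $\Theta_{\delta\beta}\in\HFa(\mathbb{T}_\delta,\Tb)$ be the canonical generators singled out by the small triangle in $D$. Define $f_+\colon\CFKa(\orL_+)\to\CFKa(\orL_-)$ to send $\x$ to the sum over $\y$ of the count of index-zero holomorphic triangles with corners $\x$, $\Theta_{\beta\gamma}$, $\y$ and with $n_w=n_z=0$, times $\y$; define $f_-$ and $f_0$ cyclically. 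Standard transversality and Gromov-compactness arguments, as in Floer theory, show these are chain maps; their Maslov--Alexander bidegree shifts follow from the usual index formulas together with the bidegrees of the $\Theta$'s, and in the same-component case these shifts are exactly those implicit in the stated triangle. (In the different-component case the shifts carry the grading on the four-dimensional factor; one can already see on graded Euler characteristics, via the skein relation for $\Delta$ and the normalization $\chi(\HFKa(\mathbb{L}))\doteq(t^{1/2}-t^{-1/2})^{|\mathbb{L}|-1}\Delta_{\mathbb{L}}(t)$, that an extra factor of $(t^{1/2}-t^{-1/2})^2$ is needed there.)

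\textbf{Step 3 (null-homotopic composites and exactness).} By associativity of the holomorphic polygon maps (counting index $-1$ rectangles), $f_-\circ f_+$ is chain homotopic to the triangle map associated to the class obtained by pairing $\Theta_{\beta\gamma}$ and $\Theta_{\gamma\delta}$ via the holomorphic triangle product in $\HFa(\Tb,\mathbb{T}_\delta)$; a local area-and-index count inside $D$ forces this class to vanish, so $f_-\circ f_+\simeq 0$, and likewise for the two other composites. Exactness then follows from the algebraic exact-triangle detection lemma: three chain maps with pairwise null-homotopic composites fit into a long exact sequence provided a certain iterated mapping cone is acyclic. By handleslide and stabilization invariance, together with the fact that the three diagrams agree outside $D$, this acyclicity reduces to a model computation in which $\orL_+,\orL_-,\orL_0$ are replaced by their local pieces (small unknots, or a Hopf-type pair), where every map can be written down by hand. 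This same model computation pins down the ``appropriately graded four-dimensional bigraded vector space'' in the different-component case: it is $V\otimes V$, with $V$ one-dimensional in bidegrees $(0,0)$ and $(-1,-1)$, matching exactly the discrepancy between the normalizations of $\HFKa$ for links with different numbers of components recorded above.

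\textbf{The main obstacle.} The technical heart is the interaction of the two basepoints with the polygon counts. Unlike in the closed-three-manifold surgery triangle, here every disk, triangle, and rectangle must be constrained by $n_w=n_z=0$ at all basepoints, which both cuts down the relevant homotopy classes and alters the model computations. Carrying out the local analysis carefully enough to (i) establish the null-homotopies of the composites, (ii) verify the acyclicity required by the detection lemma, and (iii) extract the precise bigraded four-dimensional factor in the two-strands-on-different-components case, is where the real work lies. One could instead try to deduce the statement from the Heegaard Floer integer surgery exact triangle applied to a surgery presentation adapted to the crossing, but the basepoint and grading bookkeeping is considerably more transparent in the direct polygon-counting argument, so that is the route I would follow.
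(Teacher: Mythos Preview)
The paper does not prove this theorem at all: it is an overview article, and immediately after the statement the authors simply write ``See~\cite{Knots} and~\cite[Chapter~9]{GridBook} for a precise statement (with specified bigradings),'' deferring the argument entirely to those references. So there is no proof in the paper to compare your proposal against.

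That said, your outline is broadly the correct one and is essentially the strategy carried out in the original holomorphic reference~\cite{Knots}: build a single multi-pointed Heegaard quadruple in which the three links are represented by three sets of $\beta$-type curves differing only near the crossing, define the maps by counting pseudo-holomorphic triangles with $n_{\ws}=n_{\zs}=0$, and feed the resulting data into the homological-algebra ``exact triangle detection lemma'' after a local model computation. A few points of caution: your description in Step~1 of how the three local curves arise (``isotoping $\beta_0$ across a basepoint'' for the crossing change) is not quite how the standard picture is set up---the three curves $\beta_0,\gamma_0,\delta_0$ are arranged so that the three underlying three-manifolds are related by a triad of surgeries on a small unknot encircling the crossing, and the basepoints sit in specific regions of the local picture rather than being crossed by an isotopy. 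Also, the reference~\cite[Chapter~9]{GridBook} proves the triangle by an entirely different, purely combinatorial route using grid diagrams (no holomorphic polygons at all), so if you want to match \emph{that} proof your sketch would need to be rewritten from scratch. Finally, the precise bigraded structure of the four-dimensional correction factor is more delicate than your parenthetical suggests and is worth checking against the cited sources rather than inferring from Euler characteristics alone.
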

        See~\cite{Knots} and~\cite[Chapter~9]{GridBook} for a
precise statement (with specified bigradings).  The above result should be
compared with Floer's exact triangle for instanton homology; compare~\cite{FloerKnots,FloerTriangles,BraamDonaldson}.

The Alexander polynomial polynomial is multiplicative under connected sum.
This has the following generalization to the case of knot Floer homology:

\begin{prop}
  If $K_1$ and $K_2$ are two knots, then
  $\HFKa(K_1\# K_2)$ is obtained as the graded tensor product of the bigraded
  vector spaces
  $\HFKa(K_1)$ and $\HFKa(K_2)$;
  i.e. $P_{K_1\# K_2}(q,t)=P_{K_1}(q,t)\cdot P_{K_2}(q,t)$
\end{prop}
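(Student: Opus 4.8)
The plan is to establish a chain-level statement and then pass to homology. First I would choose doubly-pointed Heegaard diagrams $\HD_1=(\Sigma_1,\alphas_1,\betas_1,w_1,z_1)$ and $\HD_2=(\Sigma_2,\alphas_2,\betas_2,w_2,z_2)$ representing $K_1$ and $K_2$ respectively, where I arrange that the basepoint arcs used to build each knot are visible near the respective $z_i$. The key geometric input is that a doubly-pointed Heegaard diagram for the connected sum $K_1\# K_2$ can be formed by taking a connected sum of the two surfaces in a small disk placed near $z_1$ and $z_2$, so that the two arcs defining the individual knots join up into a single arc defining the band sum. Concretely, $\HD_1\#\HD_2$ has surface $\Sigma_1\#\Sigma_2$ of genus $g_1+g_2$, attaching circle tuples $\alphas_1\cup\alphas_2$ and $\betas_1\cup\betas_2$, a single $w$-basepoint (identifying $w_1$ with $w_2$ in the connect-sum region), and a $z$-basepoint inherited on one side. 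One must check that the resulting knot in $S^3\#S^3=S^3$ is indeed $K_1\#K_2$; this is a standard but important verification that the surgery/arc description matches the geometric connected sum of knots.

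Next I would identify the generators and differential of $\CFKa(\HD_1\#\HD_2)$. Because the new $\alpha$- and $\beta$-circles split as a disjoint union over the two sides, a Heegaard state of $\HD_1\#\HD_2$ is exactly a pair $(\x_1,\x_2)$ with $\x_i\in\States(\HD_i)$, so as a bigraded vector space $\CFKa(\HD_1\#\HD_2)\cong\CFKa(\HD_1)\otimes_\Field\CFKa(\HD_2)$. For the differential, the crucial point is that for an appropriate choice of almost-complex structure (stretching the neck in the connect-sum region), every holomorphic disk in $\Sym^{g_1+g_2}(\Sigma_1\#\Sigma_2)$ counted by $\da_K$ with $\Mas=1$ and $n_w=n_z=0$ degenerates as a product of a disk on one side and a constant on the other: a disk that genuinely crosses the connected-sum neck must pass through the region between $w_1$ and $w_2$ and hence has positive $n_w$, which is forbidden. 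This gives $\da_K=\da_{K_1}\otimes\Id\pm\Id\otimes\da_{K_2}$, exactly the differential of the tensor product complex. I would also check that the Maslov and Alexander gradings add, $M(\x_1,\x_2)=M(\x_1)+M(\x_2)$ and $A(\x_1,\x_2)=A(\x_1)+A(\x_2)$, using the additivity of $\Mas$, $n_w$, $n_z$ under juxtaposition of homotopy classes, together with the fact that the gradings on each side are normalized so that $\HFa(S^3)$ sits in Maslov grading zero and the Euler characteristic is symmetric, so the normalization constants add correctly.

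Having the bigraded chain-level isomorphism $\CFKa(\HD_1\#\HD_2)\cong\CFKa(\HD_1)\otimes_\Field\CFKa(\HD_2)$ of complexes, I would invoke the K\"unneth formula for the homology of a tensor product of complexes of $\Field$-vector spaces (where $\Field$ is a field, so there are no Tor terms) to conclude $\HFKa(K_1\# K_2)\cong\HFKa(K_1)\otimes_\Field\HFKa(K_2)$ as bigraded vector spaces. Taking dimensions in each bidegree $(d,s)$ and summing gives the Poincar\'e polynomial identity $P_{K_1\#K_2}(q,t)=P_{K_1}(q,t)\cdot P_{K_2}(q,t)$.

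The main obstacle is the neck-stretching degeneration argument: one must argue that, after sufficiently stretching the neck of the connected sum, all relevant moduli spaces of index-one holomorphic disks with $n_w=n_z=0$ are cut out transversely and consist precisely of products of a rigid disk on one factor with a constant map on the other, with matching algebraic counts. This is where the holomorphic-disk analysis (Gromov compactness, gluing, the fact that the point class $V_w$ blocks any disk crossing the neck) does the real work; the rest—additivity of gradings and the algebraic K\"unneth formula—is essentially formal. Alternatively, one can bypass some analysis by citing the stabilization and connected-sum formulas for $\CFa$ already established in the foundational references and adapting them to the doubly-pointed setting, but the essential point remains the same.
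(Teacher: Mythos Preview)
The paper is an overview article and does not supply a proof of this proposition; it is stated as a known structural property of knot Floer homology, with the implicit reference being the original papers~\cite{Knots,RasmussenThesis}. Your overall strategy---build a doubly-pointed Heegaard diagram for $K_1\#K_2$ as a connected sum of diagrams, identify generators with pairs, use a neck-stretching argument to split the differential, check additivity of gradings, and apply K\"unneth over a field---is exactly the standard argument from that literature, so in spirit you are doing the right thing.

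That said, your description of the connected-sum diagram is genuinely garbled, and the specific reason you give for why disks cannot cross the neck is not correct as stated. The standard construction performs the connected sum at $z_1$ and $w_2$ (one basepoint of each type), producing the doubly-pointed diagram $(\Sigma_1\#\Sigma_2,\alphas_1\cup\alphas_2,\betas_1\cup\betas_2,w_1,z_2)$; connecting ``near $z_1$ and $z_2$'' and then ``identifying $w_1$ with $w_2$'' is not a coherent operation, and connecting at the two $w$-basepoints would yield the split link $K_1\sqcup K_2$ rather than $K_1\#K_2$. With the correct placement, the neck region does \emph{not} contain either surviving basepoint, so your sentence ``a disk that crosses the neck has positive $n_w$'' is false on its face. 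The actual mechanism is an index argument: after stretching, an index-one holomorphic disk degenerates into a pair $(u_1,u_2)$ of holomorphic disks on the two closed-up sides with $\Mas(\phi)=\Mas(u_1)+\Mas(u_2)$; transversality forces one $u_i$ to be constant, and a constant disk has zero multiplicity at the (filled-in) neck point, which then forces the neck multiplicity of $\phi$ to vanish. Equivalently, one can run the argument at the level of the filtered complex $\CFKm$ (or $\CFK^{\infty}$), where the three-manifold connected-sum theorem for $\CFa$ applies directly and the Alexander filtration is seen to be multiplicative. Once you correct the diagram and replace the ``$n_w>0$'' claim with the index/degeneration argument, the rest of your outline (additivity of $M$ and $A$, K\"unneth over $\Field$) goes through.
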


Recall that a knot is called {\em alternating} if it has a diagram
with the property that crossings alternate between over- and
under-crossings as one follows the projection.  By a classical theorem
of Cromwell and Murasugi~\cite{Cromwell,Murasugi}, the Alexander
polynomial of an alternating knot is special: its coefficients
alternate in sign. This has the following analogue for knot Floer
homology~\cite{AltKnots}; see also~\cite{Rasmussen2Bridge,ManolescuOzsvath}.

\begin{thm}
  \label{thm:AltKnots}
  If $K$ is an alternating knot, then the knot Floer homology for $K$
  is determined by its Alexander polynomial $\Delta_K(t)$ and its
  signature $\sigma(K)$, by the formula
  \[ P_{K}(q,t)=q^{\frac{\sigma}{2}}\cdot\Delta_K(q t).\]
\end{thm}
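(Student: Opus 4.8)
The plan is to deduce the formula from a general lower bound on the total rank of $\HFKa$, together with a matching upper bound --- special to alternating knots --- that at the same time collapses the homology onto a single diagonal. Write $\Delta_K(t)=\sum_s a_s t^s$. For \emph{any} knot, the Euler characteristic identity \eqref{eq:Alex} gives $\sum_d(-1)^d\dim\HFKa_d(K,s)=a_s$ for each Alexander grading $s$, so
\[\sum_d \dim\HFKa_d(K,s)\geq |a_s|,\]
with equality exactly when $\HFKa_\ast(K,s)$ is concentrated in Maslov gradings of one parity; summing over $s$ gives $\dim\HFKa(K)\geq\sum_s|a_s|$. The theorem thus reduces to showing that, for $K$ alternating, $\HFKa(K)$ is \emph{thin}: supported entirely on the single diagonal $M-A=\tfrac{\sigma(K)}{2}$. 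Granting this, in each Alexander grading $s$ the homology sits only in Maslov grading $d=s+\tfrac{\sigma}{2}$, and then $(-1)^{s+\sigma/2}\dim\HFKa_{s+\sigma/2}(K,s)=a_s$ forces $\dim\HFKa_{s+\sigma/2}(K,s)=|a_s|$ and the alternation in sign of the nonzero $a_s$ (the Cromwell--Murasugi theorem). This determines the bigraded group $\HFKa(K)$ from $\Delta_K$ and $\sigma(K)$, as in the statement.

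To establish thinness I would induct on the number of crossings of a reduced alternating diagram, using the skein exact triangle for oriented links stated above. The base cases are the unknot and the unlinks obtained by resolving crossings (for an $n$-component unlink $\Delta=0$, while $\HFKa$ has rank $2^{n-1}$, consistently on one diagonal). For the inductive step one selects a crossing so that the oriented resolution $\orL_0$, and a suitable crossing change $\orL_-$, are both reduced alternating with strictly fewer crossings (after removing any nugatory crossing so produced); by induction these are thin with ranks prescribed by their Alexander polynomials. The exact triangle bounds $\dim\HFKa(\orL_+)$ above by $\dim\HFKa(\orL_-)+\dim\HFKa(\orL_0)$; matching Euler characteristics via \eqref{eq:Alex} against the skein relation $\Delta_{\orL_+}-\Delta_{\orL_-}=(t^{1/2}-t^{-1/2})\Delta_{\orL_0}$ --- using the alternation already established for $\orL_-$ and $\orL_0$ to rule out cancellation --- shows this bound is attained, whence the connecting maps in the triangle vanish and $\HFKa(\orL_+)$ is thin too. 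Throughout, one tracks the Maslov and Alexander grading shifts across the triangle --- including the extra, suitably graded four-dimensional tensor factor present when the two strands at $\orL_+$ lie on distinct components --- and reconciles them with the change of $\sigma$ under the crossing change and the resolution; the Gordon--Litherland description of the signature via a checkerboard Goeritz form is convenient here, as that form transforms predictably under these moves. It is cleanest to run the induction inside the class of quasi-alternating links, which is tailored so the required skein move always exists; every alternating link is quasi-alternating.

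The main obstacle will be the confluence of three ingredients: (a) a combinatorial input guaranteeing that a reduced alternating diagram with a crossing always possesses a crossing whose resolution and appropriate change stay reduced alternating with fewer crossings --- this is where the absence of nugatory crossings and the checkerboard structure enter, and the reason the quasi-alternating setting is convenient; (b) exact control of all three bigradings across the skein exact triangle, including the four-dimensional factor in the two-component case and the (possibly half-integral) Alexander grading conventions for links; and (c) verifying that, for the crossing chosen in (a), the grading shifts of (b) are compatible with the change of signature, so that thinness is propagated on the \emph{correct} diagonal $M-A=\sigma/2$ rather than merely on \emph{some} diagonal. Once thinness on this diagonal is in hand, the bigraded group $\HFKa(K)$ --- hence its Poincar{\'e} polynomial --- follows from \eqref{eq:Alex} with no further computation, so the whole substance of the theorem lies in the inductive proof of thinness and the identification of the diagonal through the signature.
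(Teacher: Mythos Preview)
Your approach is genuinely different from the paper's, and as written it has a real gap in the inductive step.

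The paper's argument (sketched in Section~5.1) is chain-level and essentially elementary once the standard Heegaard diagram for a knot projection is in hand. In that diagram the generators of $\CFKa$ are Kauffman states, and their Maslov and Alexander gradings are computed as sums of local contributions at crossings (Figure~\ref{fig:LocalContribs}). Comparing the two rows of that figure, one sees that at a positive crossing the local $M$- and $A$-contributions agree in every quadrant, while at a negative crossing they differ by a constant. Hence for an \emph{alternating} diagram $M(\x)-A(\x)$ is the same for every Kauffman state; a short computation identifies this constant with $\sigma(K)/2$. Since $\da_K$ preserves $A$ and drops $M$ by one, there are simply no generators to map to, so the differential vanishes identically and $\HFKa(K)$ equals the chain complex as a bigraded vector space. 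The formula then drops out of $\chi(\CFKa(K))=\Delta_K(t)$. No exact triangles, no induction.

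Your strategy---prove thinness by induction through a skein exact triangle, then read off the ranks from the Euler characteristic---is the Manolescu--Ozsv\'ath approach for quasi-alternating links, and it does yield the theorem. But the triangle you invoke is the \emph{oriented} one $(\orL_+,\orL_-,\orL_0)$ stated in this paper, where $\orL_-$ is obtained from $\orL_+$ by a crossing \emph{change}. Changing a crossing in a reduced alternating diagram never produces an alternating diagram with fewer crossings (it produces a non-alternating diagram with the same number of crossings), and ``removing nugatory crossings so produced'' does not rescue this in general. So your inductive hypothesis is unavailable for $\orL_-$, and the step as you describe it fails. The quasi-alternating induction you allude to at the end is built instead on the \emph{unoriented} skein exact triangle, where the three links are $L$ together with its two smoothings $L_0$, $L_\infty$ at a crossing; for a reduced alternating diagram both smoothings are alternating with strictly fewer crossings, which is exactly why the class of quasi-alternating links is closed in the way you need. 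If you rewrite the inductive step with that triangle (and track the $\delta$-grading rather than $M$ and $A$ separately), the argument goes through; but it is not the oriented triangle recorded in this paper, and it is considerably more work than the paper's two-line Kauffman-state argument.
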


Knot Floer homology can be given more algebraic structure. For example, 
there is a version which is 
a free chain complex over $\Field[U]$, $\CFKm(\HD)$, with differential 
\[\dm_K (\x)=\sum_{\y\in\States}\sum_{\{\phi\in\pi_2(\x,\y)|n_z(\phi)=0, \Mas(\phi)=1\}}
\#\left(\frac{\ModFlow(\phi)}{\R} \right)U^{n_w(\phi)}\y.\] 
Extending the Maslov and Alexander grading so that multiplication by $U$ drops Maslov grading
by $2$ and Alexander grading by $1$, we have that
\[ \dm_K\colon \CFKm_d(\HD,s)\to \CFKm_{d-1}(\HD,s)
\qquad
U\colon \CFKm_d(\HD,s)\to \CFKm_{d-2}(\HD,s-1).\]
Thus, the homology $\HFKm(\HD)$ inherits the structure of a bigraded $\Field[U]$-module.

\begin{prop}
  \label{prop:Structure}
  The bigraded module $\HFKm(K)$ is finitely generated; in fact, it
  consists of direct summands of the form $\Field[U]/U^m$ for various
  choices of $m$, and a single free summand $\Field[U]$.
\end{prop}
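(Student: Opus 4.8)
The plan is to exploit the structural features of $\CFKm(K)$ over $\Field[U]$: it is a finitely generated free chain complex (one free generator for each Heegaard state), and on it $U$ acts freely. First I would invoke the fundamental computation that setting $U = 0$ recovers $\CFKa$, whose homology $\HFKa(K)$ is finite-dimensional over $\Field$, while inverting $U$ recovers the complex $\CFm(S^3)$ (up to the Alexander grading, which becomes irrelevant after localization), whose homology $U^{-1}\HFm(S^3)$ is $\Field[U,U^{-1}]$, i.e. rank one over $\Field[U,U^{-1}]$. Together these say that $\HFKm(K)$ is a finitely generated $\Field[U]$-module (since $\CFKm(K)$ is finitely generated over the Noetherian ring $\Field[U]$, its homology is too) of rank exactly one.

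Next I would appeal to the classification of finitely generated modules over the PID $\Field[U]$: any such module is a finite direct sum of a free part $\Field[U]^{r}$ and torsion summands $\Field[U]/U^{m_i}$ (note that $\Field[U]$ is a PID and the only relevant prime here — the one detected by the homological algebra above — is $U$; in fact one can argue there are no other torsion primes because $\CFKm(K)$ is bigraded and $U$ is the only grading-shifting map in play, but more simply, any $\Field[U]/p(U)$ summand with $p$ coprime to $U$ would survive both the $U=0$ and $U$-localization operations incompatibly, so such summands cannot occur). Since the rank is one, we get $r = 1$, yielding the asserted form: a single free summand $\Field[U]$ together with finitely many torsion summands $\Field[U]/U^{m_i}$.

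For the bigraded refinement I would remark that the classification theorem holds in the graded category as well: each summand carries a well-defined internal bigrading (the free summand is generated in some bidegree $(d_0, s_0)$, each torsion summand $\Field[U]/U^{m_i}$ in some $(d_i, s_i)$), and the decomposition can be taken to respect the bigrading because $\Field[U]$ with $U$ in bidegree $(-2,-1)$ is a graded PID and the structure theorem is natural in this setting. The finiteness of the number of summands is immediate from finite generation.

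The main obstacle — really the only non-formal input — is establishing that the rank of $\HFKm(K)$ over $\Field[U]$ is exactly one and that no torsion at primes other than $U$ appears. This rests on the identification of $U^{-1}\CFKm(K)$ with a complex computing $\HFm(S^3)\otimes_{\Field[U]}\Field[U,U^{-1]} \cong \Field[U,U^{-1}]$, which in turn comes from the fact that once $U$ is inverted the $z$-basepoint constraint $n_z(\phi)=0$ is irrelevant and the complex becomes (a localization of) $\CFm(S^3)$; this is precisely the content established in~\cite{HolKnot,Knots}. Everything else is the structure theorem for modules over a PID applied carefully in the bigraded setting.
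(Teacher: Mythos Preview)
Your overall strategy --- finite generation over the PID $\Field[U]$, the structure theorem, only $U$-torsion by homogeneity, and rank one via the relationship with $\HFm(S^3)$ --- is exactly how this is proved in the references the paper cites (the paper itself gives no argument, only pointers to~\cite{Knots,RasmussenThesis} and~\cite[Chapter~7]{GridBook}).

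There is, however, a genuine gap in your rank-one step. You assert that inverting $U$ in $\CFKm(K)$ ``recovers the complex $\CFm(S^3)$'' because the $n_z(\phi)=0$ constraint becomes irrelevant. This is not true: the differential $\dm_K$ counts only homotopy classes with $n_z(\phi)=0$, and localizing at $U$ does nothing to change which disks are counted --- you obtain the same differential, now over $\Field[U,U^{-1}]$. The complex $U^{-1}\CFKm(K)$ is simply not $U^{-1}\CFm(S^3)$; the latter counts \emph{all} Maslov-one disks weighted by $U^{n_w(\phi)}$. You may be conflating localization at $U$ with setting $V=1$ in a two-variable model, or with forgetting the Alexander \emph{filtration} on $\CFm$ --- but $\CFKm$ as defined here is the associated graded of that filtration, not a sublocalization.

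The conclusion you want, that $U^{-1}\HFKm(K)\cong\Field[U,U^{-1}]$, is correct, but the proof goes through the filtered picture. The Alexander grading is a filtration on $\CFm(S^3)$ whose associated graded is $\CFKm(K)$. Over a field, any filtered complex is filtered-homotopy-equivalent to a ``reduced'' model whose underlying graded module is the homology of the associated graded, here $\HFKm(K)$, and whose total homology is $\HFm(S^3)\cong\Field[U]$. Analyzing how a $U$-equivariant, filtration-decreasing differential can act on $\Field[U]^{r}\oplus(\text{$U$-torsion})$ and still leave total homology $\Field[U]$ forces $r=1$. This is what is carried out in~\cite[Chapter~7]{GridBook}; your Euler-characteristic and grading observations alone do not suffice, since they only give $r+2t=\dim\HFKa(K)$ without ruling out $r>1$.
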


The above proposition is clear from the original definitions of knot
Floer homology~\cite{Knots,RasmussenThesis};
see~\cite[Chapter~7]{GridBook} for a more precise reference.

Proposition~\ref{prop:Structure} shows that $\HFKm(K)$ always contains
$U$-non-torsion elements, i.e. elements $\xi\in \HFKm(K)$ with
$U^m\cdot \xi\neq 0$ for all $m$.  Thus, there is a natural numerical
invariant of knots $K$, denoted $\tau(K)$ which is defined as $-1$
times the maximal Alexander grading of any non-torsion class $\xi\in
\HFKm(K)$.

\section{Topological applications}

Knot Floer homology was originally envisioned as a tool for computing
the Heegaard Floer homology groups of three-manifolds obtained as
surgeries on a given knot.  If $K$ is a knot in $S^3$, a ``surgery
formula'' expresses the Heegaard Floer homology $\HFa$ for
three-manifolds obtained as surgeries on $K$ in terms of another
variant of knot Floer homology $\HFKwz(K)$ defined over the ring
$\OurRing=\Field[U,V]/U V = 0$.  This knot invariant is the homology
of a chain complex, $\CFKwz({\mathcal H})$, which is freely generated
(over $\OurRing$) by Heegaard states, and whose differential is given
by
\begin{equation}
        \label{eq:CFKwz}
         \partial(\x)=\sum_{\y\in\States}
\sum_{\{\phi\in\pi_2(\x,\y)|\Mas(\phi)=1\}}
\#\left(\frac{\ModFlow(\phi)}{\R}\right)\cdot U^{n_w(\phi)} V^{n_z(\phi)} \y.
\end{equation}
As in the case of $\HFKm$, the homology module inherits a bigrading: in the present case, $U$ drops
Alexander grading by one, and $V$ raises it by one.  

We do not state the surgery formula here (see~\cite{IntSurg,
RatSurg}), but we do give a consequence:

\begin{thm}
  \label{thm:LSpaceKnots}
  ~\cite{NoteLens}
  Suppose that $K$ is a knot with the property that for some rational number $r\in \Q$,
  the three-manifold $S^3_r(K)$ is a lens space. Then, all the coefficients of
  the Alexander polynomial are $\pm 1$ or $0$; in fact, the non-zero ones alternate in sign.
  Thus, we can write
  \[ \Delta_K(t)=\sum_{k=0}^{n} (-1)^k t^{\alpha_k} \] where $\{\alpha_k\}_{k=0}^n$ is a decreasing sequence of 
  integers. Moreover, when $r>0$,
  the knot Floer homology of $K$ is determined by this Alexander polynomial, 
  as follows. There is a sequence of integers $\{m_k\}_{k=0}^n$ determined by the formulae:
  \begin{align*}
    m_0&= 0 \\
    m_{2k}&=m_{2k-1}-1 \\
    m_{2k+1}&=m_{2k}-2(\alpha_{2k}-\alpha_{2k+1})+1,
  \end{align*}
  so that
  \[ P_{T_{p,q}}(q,t)=\sum_{k=0}^{n} q^{m_k} t^{\alpha_k}.\]
\end{thm}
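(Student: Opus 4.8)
The plan is to derive everything from the surgery formula for $\HFa$. Since a lens space $L(p,q)$ has $\dim_\Field \HFa(L(p,q)) = p = |H_1(L(p,q);\Z)|$, it is an \emph{L-space}, so the hypothesis says $S^3_r(K)$ is an L-space for some $r\in\Q_{>0}$. The (rational) surgery formula presents $\HFa(S^3_r(K))$ as the homology of a mapping cone assembled from subquotient complexes $\widehat{A}_s$ of the knot complex of $K$ (in the customary $(i,j)$-plane notation, $\widehat{A}_s\simeq C\{\max(i,j-s)=0\}$) together with maps into copies of $\CFa(S^3)$, which has homology $\Field$. For $|s|\gg 0$ one already has $H_*(\widehat{A}_s)=\HFa(S^3)=\Field$, and an L-space realizes the smallest total dimension consistent with such a mapping cone; tracing this through forces $H_*(\widehat{A}_s)\cong\Field$ for \emph{every} $s\in\Z$ (equivalently, $S^3_N(K)$ is an L-space for all integers $N\ge 2g(K)-1$). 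This is where positivity of $r$ enters: if $r<0$ then $-S^3_r(K)=S^3_{|r|}(\mirror(K))$ is a lens space, so the same analysis applies to the mirror; since $\Delta_{\mirror(K)}=\Delta_K$ the conclusion about the Alexander polynomial is unaffected, but the knot complex of $K$ is then the \emph{dual} of a staircase, whose Maslov gradings run oppositely, which is why the explicit formula for $P_K$ is claimed only for $r>0$.

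The core of the argument is the algebraic assertion: a finitely generated knot complex for $K$ — with its $\Field[U]$-module structure and the symmetry exchanging the basepoints $w$ and $z$ — for which $H_*(\widehat{A}_s)\cong\Field$ for all $s$ is filtered chain homotopy equivalent to a \emph{staircase complex}: a complex with distinguished generators $x_0,x_1,\dots,x_n$ ($n$ even), the even-indexed ones cycles, laid out as a staircase in the $(i,j)$-plane — symmetric under $180^\circ$ rotation, with steps alternating between horizontal and vertical — in which $\partial x_{2k+1}$ hits precisely its two neighbours $x_{2k}$ and $x_{2k+2}$ along the two adjacent edges. I would prove this by induction along the staircase: the vertical homology $H_*(C\{i=0\})\cong\HFa(S^3)\cong\Field$ singles out a cycle $x_0$ with $A(x_0)=g(K)$ (so $\tau(K)=g(K)$) representing the generator of $\HFa(S^3)$; the horizontal homology does the mirror-image thing at the bottom; and the one-dimensionality of each intermediate $H_*(\widehat{A}_s)$ leaves room for exactly one more staircase step and for no extra generator or surviving differential component after cancellation. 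This inductive rigidity — showing that the homological hypotheses genuinely pin the complex down, up to filtered homotopy equivalence, to a staircase — is the step I expect to be the main obstacle.

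Granting the staircase model, the two displayed formulas are immediate. There is no further cancellation in $\HFKa(K)$, so each $x_k$ contributes a single copy of $\Field$, and the Maslov gradings of consecutive $x_k$ have opposite parity. Writing $\alpha_k:=A(x_k)$ — a strictly decreasing integer sequence, symmetric under $\alpha_k\mapsto-\alpha_{n-k}$, with $\alpha_0=g(K)$ — the graded Euler characteristic identity $\chi(\CFKa(K))=\Delta_K(t)$ of Equation~\eqref{eq:Alex} becomes $\Delta_K(t)=\sum_{k=0}^n(-1)^k t^{\alpha_k}$; in particular the coefficients lie in $\{0,\pm1\}$ and the nonzero ones alternate in sign.

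Finally, set $m_k:=M(x_k)$, so $P_K(q,t)=\sum_{k=0}^n q^{m_k}t^{\alpha_k}$. As $x_0$ represents the generator of $\HFa(S^3)$, which is supported in Maslov grading $0$, we get $m_0=0$. The staircase must begin at the top with a horizontal step — a vertical first step would force $A(x_1)>A(x_0)$, contradicting maximality of $\alpha_0$ — and then the steps alternate. A horizontal step is an arrow $\partial x_{2k+1}\ni U^{h}x_{2k}$ with $h=\alpha_{2k}-\alpha_{2k+1}$, carried by a class $\phi\in\pi_2(x_{2k+1},x_{2k})$ with $n_w(\phi)=h$ and $n_z(\phi)=0$; the defining property $M(\x)-M(\y)=\Mas(\phi)-2n_w(\phi)$ of the Maslov grading then gives $m_{2k+1}-m_{2k}=1-2h$, i.e.\ $m_{2k+1}=m_{2k}-2(\alpha_{2k}-\alpha_{2k+1})+1$. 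A vertical step is an arrow $\partial x_{2k-1}\ni x_{2k}$ carried by a class with $n_w=0$, so the same property gives $m_{2k}-m_{2k-1}=-1$. These are precisely the stated relations, which completes the proof. (As a consistency check, $K=T_{3,4}$ has $\Delta_K(t)=t^3-t^2+1-t^{-2}+t^{-3}$, and the recursion returns $(m_0,\dots,m_4)=(0,-1,-2,-5,-6)$, matching the knot Floer homology of the $(3,4)$-torus knot.)
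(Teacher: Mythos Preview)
The paper does not actually prove this theorem; it is stated as a consequence of the surgery formula and attributed to~\cite{NoteLens}, with no argument given beyond that attribution. Your sketch follows precisely the route of the cited source: use the (rational) surgery formula and the L-space condition to force $H_*(\widehat{A}_s)\cong\Field$ for all $s$, deduce that the knot complex is a staircase, and then read off the Alexander polynomial and Maslov gradings. So there is no discrepancy in approach to report.

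Your outline is sound, and you have correctly flagged the one genuinely nontrivial step: the filtered rigidity argument showing that the hypotheses $H_*(\widehat{A}_s)\cong\Field$ for all $s$ force the complex to be a staircase. In~\cite{NoteLens} this is handled by an inductive cancellation argument of the type you describe, but it does require care (one works in the full $\Z\oplus\Z$-filtered complex and uses both the $w$- and $z$-directions simultaneously); your induction sketch is the right idea but would need to be fleshed out to count as a proof. The Maslov-grading recursion you derive from the staircase is correct, and your $T_{3,4}$ check is a good sanity test. One small point of presentation: your phrase ``horizontal step'' for the arrow $x_{2k+1}\to U^h x_{2k}$ presupposes the $(i,j)$-plane picture of $CFK^\infty$ rather than $\CFKa$ or $\CFKm$ alone; it would be cleaner to say explicitly that you are working in the full filtered complex, since in $\CFKa$ that arrow is invisible.
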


In particular, for each $s\in \Z$, $\HFKa_*(K,s)$ has dimension $0$ or $1$.

Recall that any knot $K\subset S^3$ can be realized as the boundary of
an orientable surface $F$ embedded in $S^3$.  Such a surface is called
a {\em Seifert surface} for $K$, and the minimal genus of any Seifert
surface for $K$ is called the {\em Seifert genus} of $K$.  It is a
classical result that the degree of the Alexander polynomial gives a
lower bound for the Seifert genus of a knot. This result has a
sharpening for knot Floer homology, which is inspired by work of
Kronheimer and Mrowka~\cite{KMThurston}:

\begin{thm}~\cite{GenusBounds}
        \label{thm:GenusBounds}
  Knot Floer homology detects the Seifert genus $g(K)$ of a knot $K$, in the sense that
  \[ g(K)=\min\{s\big| \HFKa_*(K,s)\neq 0\}.\]
\end{thm}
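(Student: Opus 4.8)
The plan is to prove the genus-detection statement
\[
g(K)=\min\{s \mid \HFKa_*(K,s)\neq 0\}
\]
in two inequalities, one of which is elementary and one of which carries all the geometric content.

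\textbf{The easy inequality.} First I would recall the symmetrized form of \eqref{eq:Alex}: the graded Euler characteristic $\chi(\CFKa(K))$ equals the symmetrized Alexander polynomial $\Delta_K(t)$. Since any non-trivial group $\HFKa_*(K,s)$ forces a non-trivial contribution to the Euler characteristic unless there is cancellation across Maslov gradings, this does \emph{not} immediately bound $\Delta_K$ by the ``top'' Alexander grading --- but it does bound the top Alexander grading of $\HFKa$ from below by $\deg\Delta_K$, hence by the classical Seifert inequality $\deg\Delta_K\le g(K)$ this already gives $\max\{s\mid\HFKa_*(K,s)\neq 0\}\ge$ (something). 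More usefully, for the direction I actually need, I would argue the reverse: given a Seifert surface $F$ of genus $g=g(K)$, one constructs a doubly-pointed Heegaard diagram $\HD$ adapted to $F$ (for instance, by taking a Morse function on the knot complement induced by $F$, or equivalently choosing the diagram so that $F$ appears as a ``nice'' surface intersecting the $\beta$-handlebody in disks) in which no Heegaard state has Alexander grading exceeding $g$. Concretely, one shows that in a suitably chosen diagram every generator $\x$ satisfies $A(\x)\le g$, because the Alexander grading can be computed (via the relation $A(\x)-A(\y)=n_z(\phi)-n_w(\phi)$ together with a formula of the type $A(\x)=\langle c_1(\mathfrak{s}_w(\x)),[\hat F]\rangle /2$ or the relative-periodic-class computation) from intersection data that is controlled by the surface. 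This yields $\max\{s\mid\HFKa_*(K,s)\neq 0\}\le g(K)$, and after applying the symmetry $\HFKa_d(K,s)\cong\HFKa_{d-2s}(K,-s)$ one gets the corresponding lower bound on the minimum, i.e. $\min\{s\mid\HFKa_*(K,s)\neq 0\}\ge -g(K)$. (I should be careful here about whether the theorem as stated uses the minimum or --- by symmetry --- equivalently the negative of the maximum; the statement quotes the minimum, so I would really prove $\min = -g(K)$... but wait, re-reading, the displayed formula says $g(K)=\min\{s\mid\HFKa_*(K,s)\neq 0\}$, which by the symmetry of $\HFKa$ is the same as $-\max$, so I would simply prove $\max\{s\mid\HFKa_*(K,s)\neq 0\}= g(K)$ and invoke symmetry. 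Actually the cleanest route is: prove $\HFKa_*(K,s)=0$ for $s>g(K)$ via the adapted diagram, and prove $\HFKa_*(K,g(K))\neq 0$ via the hard non-vanishing input below; the min/max bookkeeping is then routine.)

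\textbf{The hard inequality (non-vanishing in the top Alexander grading).} The crux is to show $\HFKa_*(K,g(K))\neq 0$, equivalently that knot Floer homology in the extremal Alexander grading is non-zero. My plan follows the strategy inspired by Kronheimer--Mrowka's work on the Thurston norm (the cited \cite{KMThurston}): one realizes the top-grading part of $\HFKwz$ or $\HFKa$ as the Heegaard Floer homology of a sutured manifold --- the complement of a minimal-genus Seifert surface, or more precisely $S^3$ cut along $F$, which is a sutured manifold $(M,\gamma)$ that is taut precisely because $F$ is genus-minimizing. The key structural fact is that the top Alexander grading summand $\HFKa_*(K,g(K))$ is isomorphic to the sutured Floer homology $SFH(S^3(F))$ of the complementary sutured manifold obtained by cutting along $F$. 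Then I would invoke the theorem that sutured Floer homology of a taut (balanced, irreducible) sutured manifold is non-zero --- this is the sutured analogue of the fact that $\HFa(Y)\neq 0$, proved by decomposing along a taut sequence of sutured-manifold decompositions down to a product sutured manifold (a handlebody-like piece with $SFH = \Field$) and showing each decomposition preserves non-vanishing. Since a minimal-genus Seifert surface yields a taut complementary sutured manifold (Gabai), the machine outputs $SFH\neq 0$, hence $\HFKa_*(K,g(K))\neq 0$.

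\textbf{Main obstacle.} The real work, and the step I expect to be hardest, is establishing the identification $\HFKa_*(K,g(K))\cong SFH(M_F)$ (top Alexander grading $\leftrightarrow$ sutured Floer homology of the cut-along-$F$ manifold) and then the non-vanishing of $SFH$ for taut sutured manifolds. The first requires a careful analysis of a Heegaard diagram subordinate to the Seifert surface and matching up generators in the extremal grading with the generators of the sutured complex; the second requires the full surface-decomposition formula for $SFH$ (a generator-counting argument showing that a well-groomed sutured decomposition corresponds to restricting to a subset of generators, so non-vanishing descends), plus Gabai's existence of a taut decomposition sequence terminating in a product. Everything else --- the Euler characteristic bookkeeping, the symmetry of $\HFKa$, and the ``every generator has $A(\x)\le g$'' estimate --- is comparatively routine once the adapted diagram is in hand.
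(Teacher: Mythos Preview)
Your proposal is correct, but it follows a genuinely different route from the paper's original proof. The paper sketches the original argument from~\cite{GenusBounds}: Gabai's sutured manifold theory gives a taut foliation on $S^3_0(K)$; Eliashberg--Thurston convert this into a symplectic structure on $[-1,1]\times S^3_0(K)$ with convex boundary; Eliashberg and Etnyre embed this cylinder in a closed symplectic manifold $X$; Donaldson's Lefschetz pencils then yield a handle decomposition of $X$ for which the Heegaard Floer four-manifold invariant is shown to be non-zero; finally, a surgery formula transfers this non-vanishing to $\HFKa_*(K,g(K))$. Your hard inequality instead goes through the identification of the extremal Alexander grading with the sutured Floer homology of the Seifert-surface complement and the non-vanishing of $SFH$ for taut balanced sutured manifolds via Gabai's hierarchy and the surface-decomposition formula. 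This is precisely the Juh{\'a}sz approach the paper cites as an alternative that ``bypasses most of the above machinery (still building on Gabai's sutured hierarchy).'' Your route is considerably more self-contained on the Floer side (no four-manifold invariants, no symplectic filling), at the cost of needing the full $SFH$ decomposition theorem; the original proof trades that for heavy symplectic and contact input but stays closer to the closed-manifold Heegaard Floer package.

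One minor point: your confusion about $\min$ versus $\max$ is justified --- by the conjugation symmetry $\HFKa_d(K,s)\cong\HFKa_{d-2s}(K,-s)$ the displayed formula should read $\max$ rather than $\min$ (or equivalently $-g(K)=\min$). Your instinct to prove $\HFKa_*(K,s)=0$ for $s>g(K)$ via an adapted diagram and $\HFKa_*(K,g(K))\neq 0$ via the non-vanishing input, then invoke symmetry, is the right way to organize the argument regardless.
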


Our original proof of the above theorem relied on many results in low-dimensional
topology. To start with, Gabai's theory of sutured manifolds equips
the zero-surgery with a taut foliation~\cite{DiskDecomposition}. A theorem of
Eliashberg and Thurston~\cite{EliashbergThurston} provides the product
$[-1,1]\times S^3_0(K)$ with a symplectic structure, which is suitably
convex at the boundary. A theorem of Eliashberg~\cite{Eliashberg} and
Etnyre~\cite{Etnyre} embeds this symplectic cylinder in a closed
symplectic manifold $X$.  Donaldson's Lefschetz
pencils~\cite{DonaldsonLefschetz} on symplectic manifolds then
provides a suitable two-handle decomposition on $X$ for which we can
prove that the Heegaard Floer four-manifold invariant is
non-zero~\cite{HolDiskSymp}. A surgery formula relating knot Floer homology
with the Heegaard Floer homology of the $0$-surgery then gives the
required non-vanishing theorem for knot Floer homology.

Juh{\'a}sz has an elegant proof of the above result~\cite{JuhaszDecomp} that bypasses most
of the above machinery (still building on Gabai's sutured hierarchy),
using his {\em sutured Floer homology}~\cite{Juhasz}.

Theorem~\ref{thm:GenusBounds} has the following  corollary:

\begin{cor}~\cite{GenusBounds}
        \label{cor:DetectUnknot}
  Knot Floer homology detects the unknot, in the sense that 
  $\HFKa(K)$ has dimension one if and only if $K$ is the unknot.
\end{cor}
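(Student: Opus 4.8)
The plan is to deduce Corollary~\ref{cor:DetectUnknot} directly from Theorem~\ref{thm:GenusBounds}, using the basic properties of $\HFKa$ recorded earlier. First I would observe that one direction is immediate: for the unknot $\Unknot$, a genus-zero Seifert surface together with Theorem~\ref{thm:GenusBounds} forces $\HFKa_*(\Unknot,s)=0$ for all $s\neq 0$, and a standard (genus-zero) Heegaard diagram shows $\HFKa(\Unknot)\cong\Field$ is one-dimensional, supported in bigrading $(0,0)$. So the content is the converse: if $\dim_\Field\HFKa(K)=1$, then $K$ is the unknot.

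For the converse, suppose $\dim_\Field\HFKa(K)=1$. By Theorem~\ref{thm:GenusBounds}, $g(K)=\min\{s\mid \HFKa_*(K,s)\neq 0\}$; since the total dimension is $1$, there is a single nonzero group $\HFKa_{d}(K,s)\cong\Field$, concentrated in one Alexander grading $s$, and hence $g(K)=s$. Next I would pin down that $s=0$, using the symmetry of knot Floer homology: $\HFKa_d(K,s)\cong\HFKa_{d-2s}(K,-s)$ (equivalently, the symmetry of the graded Euler characteristic $\chi(\CFKa(K))\doteq\Delta_K(t)$ normalized to be symmetric in $t$, as stated after Equation~\eqref{eq:Alex}). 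A one-dimensional group that is symmetric under $s\mapsto -s$ must sit in $s=0$, so $g(K)=0$. A knot of Seifert genus zero bounds a disk in $S^3$, hence is the unknot. This completes the argument.

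The only subtlety — and the step I would be most careful about — is invoking exactly the right auxiliary input. The symmetry $\HFKa_d(K,s)\cong\HFKa_{d-2s}(K,-s)$ is a standard property of knot Floer homology (coming from the symmetry of the Heegaard diagram under swapping the two basepoints, which reverses the orientation of $K$) and is implicit in the normalization of the Alexander grading discussed after Equation~\eqref{eq:Alex}; I would cite~\cite{Knots} (or \cite[Chapter~7]{GridBook}) for it rather than reprove it. Alternatively, one can avoid the symmetry entirely: if $\dim\HFKa(K)=1$, then the graded Euler characteristic $\Delta_K(t)=\pm q^{d} t^{s}\big|_{q=-1}$ is a monomial, hence (being symmetric and normalized with $\Delta_K(1)=1$) equals $1$, so $\Delta_K$ has degree zero; combined with $g(K)=\deg\Delta_K=0$ from Theorem~\ref{thm:GenusBounds} this again gives $g(K)=0$ and $K=\Unknot$. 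Either route is short; the real mathematical weight is entirely carried by Theorem~\ref{thm:GenusBounds}, and the corollary is a formal consequence.
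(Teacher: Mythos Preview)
Your argument is correct and is exactly the standard deduction the paper has in mind: the text offers no separate proof, simply presenting the statement as an immediate corollary of Theorem~\ref{thm:GenusBounds}, and your use of the conjugation symmetry $\HFKa_d(K,s)\cong\HFKa_{d-2s}(K,-s)$ (or equivalently the symmetry of $\Delta_K$) to force the lone generator into Alexander grading $0$ is the natural way to fill in that step. One small quibble: in your alternative route the phrase ``$g(K)=\deg\Delta_K$ from Theorem~\ref{thm:GenusBounds}'' is loosely worded, since that theorem identifies $g(K)$ with the top Alexander grading of $\HFKa$, not with $\deg\Delta_K$ in general; of course the two agree here because there is no cancellation when $\dim\HFKa(K)=1$, so the logic is sound even if the phrasing could be tightened.
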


The above corollary underscores how far knot Floer homology goes
beyond the Alexander polynomial: there are infinitely many knots with
trivial Alexander polynomial. 

Theorem~\ref{thm:LSpaceKnots} has a more precise statement, which
expresses the sequence $\{\alpha_k\}$ concretely in terms of the
surgery coefficient $r$ and the resulting lens space
$L(p,q)$~\cite{NoteLens}. In~\cite{GenusBounds}, we combine this result with
Corollary~\ref{cor:DetectUnknot}, to obtain the following result,
first proved using Seiberg-Witten theory in our joint work with Kronheimer
and Mrowka:
\begin{cor}~\cite{KMOS}
        If $K\subset S^3$ is a knot with the property that $S^3_r(K)\cong {\mathbb{RP}}^3$,
        then $K$ is the unknot.
\end{cor}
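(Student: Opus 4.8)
The plan is to reduce the statement to detecting triviality via knot Floer homology, using the lens space surgery characterization of Theorem~\ref{thm:LSpaceKnots}. First I would record the reductions. Since $\mathbb{RP}^3\cong L(2,1)$ is a lens space, Theorem~\ref{thm:LSpaceKnots} applies to this $K$ and $r$. Orientations can be normalized: if $r<0$, replace $K$ by its mirror $m(K)$; then $S^3_{-r}(m(K))\cong -S^3_r(K)\cong -\mathbb{RP}^3\cong\mathbb{RP}^3$ (as $\mathbb{RP}^3$ admits an orientation-reversing self-diffeomorphism, $L(2,-1)=L(2,1)$), and $m(K)$ is the unknot if and only if $K$ is; so I may assume $r>0$. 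Since $|H_1(S^3_r(K))|$ is the numerator of $r$ in lowest terms and $|H_1(\mathbb{RP}^3)|=2$, the slope is $r=2/q$ with $q$ odd, and the target is $L(2,1)$ with its standard orientation (up to the usual identifications); note also $r\ge 2g(K)-1$ automatically, since a lens space is an L-space.

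The heart of the argument is to show the Alexander polynomial is trivial. For this I would invoke the sharpened form of Theorem~\ref{thm:LSpaceKnots} from~\cite{NoteLens}, which pins down the sequence $\{\alpha_k\}$ --- equivalently $\Delta_K$, equivalently the torsion coefficients $t_s(K)=\sum_{j\ge 1} j\,a_{s+j}(K)$ --- in terms of $r$ and the resulting lens space, through the surgery formula of~\cite{IntSurg,RatSurg} (a comparison of the Heegaard Floer correction terms on the two sides). The feature special to $\mathbb{RP}^3$ is that $L(2,1)$ is itself $(2/q)$-surgery on the unknot $\Unknot$; matching the invariants of $S^3_{2/q}(K)\cong L(2,1)\cong S^3_{2/q}(\Unknot)$ then forces $t_s(K)=t_s(\Unknot)=0$ for every $s$, hence $a_s(K)=0$ for all $s\neq 0$, i.e. $\Delta_K(t)=1$. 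Equivalently, the explicit formula in Theorem~\ref{thm:LSpaceKnots} admits, for the lens space $L(2,1)$, only the solution $n=0$, $\alpha_0=0$.

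To conclude, I would apply the ``moreover, when $r>0$'' clause of Theorem~\ref{thm:LSpaceKnots}: the bigraded group $\HFKa(K)$ is determined by $\Delta_K$ via the stated recursion. With $\Delta_K(t)=1$ we have $n=0$ and $m_0=0$, so $P_K(q,t)=1$; hence $\HFKa(K)$ has total dimension one. By Corollary~\ref{cor:DetectUnknot} (knot Floer homology detects the unknot), $K$ is the unknot. This recovers the result of~\cite{KMOS}.

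The main obstacle is not in this bookkeeping but in the refined form of Theorem~\ref{thm:LSpaceKnots} itself, whose proof combines the surgery formula with the structure of $\HFKm$ for knots admitting lens space surgeries (Proposition~\ref{prop:Structure}) and the sharp genus bound (Theorem~\ref{thm:GenusBounds}); carrying out the correction-term comparison for rational --- not merely integral --- surgery coefficients, and correctly tracking $\Spinc$ structures and lens space orientations, is where the real work lies. Granting that machinery, the $\mathbb{RP}^3$ case is clean precisely because the target $L(2,1)$ is realized by surgery on the unknot, which collapses all the constraints to $\Delta_K=1$.
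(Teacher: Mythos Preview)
Your proposal is correct and follows essentially the same route as the paper: invoke the sharpened form of Theorem~\ref{thm:LSpaceKnots} from~\cite{NoteLens}, which determines the sequence $\{\alpha_k\}$ (hence $\Delta_K$) concretely from the surgery coefficient and the resulting lens space, and then combine this with Corollary~\ref{cor:DetectUnknot}. The paper sketches only these two ingredients; your additional bookkeeping (orientation normalization, the torsion-coefficient comparison with the unknot, and the explicit conclusion $\Delta_K=1$ via the surgery formula) fills in the details faithfully.
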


See~\cite{Greene} for a vast generalization.

A final property of knot Floer homology motivated by the Alexander
polynomial is based on the classical result that the Alexander
polynomial of a fibered knot is monic.  This has an analogue for knot
Floer homology: if $K$ is a fibered knot with Seifert genus $g=g(K)$,
then
\[ \HFKa_*(K,g)=\bigoplus_{d\in \Z}\HFKa_d(K,g) \]
is one-dimensional~\cite{HolDiskSymp}. This fact has the following remarkable converse, due to
Paolo Ghiggini when $g=1$ and Yi Ni when $g>1$:
\begin{thm}~\cite{Ghiggini, YiNiFibered}
  If 
  $\HFKa_*(K,g)$  is one-dimensional, then $K$ is fibered.
\end{thm}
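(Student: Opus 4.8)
The plan is to adopt Yi Ni's strategy for all genera; the genus-one case, due to Ghiggini, predates it and was obtained by more hands-on contact-geometric and surgery-formula arguments adapted to the once-punctured-torus complement. The idea is to reinterpret the hypothesis through \emph{sutured Floer homology} and feed it into Gabai's theory of sutured manifold hierarchies. Fix a minimal genus Seifert surface $R\subset S^3$ for $K$; by Theorem~\ref{thm:GenusBounds} it has genus exactly $g$, and minimality makes $R$ Thurston-norm-minimizing in the knot exterior. Form the balanced sutured manifold $(M_R,\gamma_R)$ by decomposing the sutured knot complement $S^3(K)$ (the exterior $S^3\setminus N(K)$ with two oppositely oriented meridional sutures) along $R$; norm-minimality of $R$ makes $(M_R,\gamma_R)$ \emph{taut}. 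We use the classical reduction (essentially Stallings' fibration criterion) that $K$ is fibered if and only if $(M_R,\gamma_R)$ is a \emph{product sutured manifold} $R\times[0,1]$: gluing the two copies of $R$ in $\partial(R\times[0,1])$ along a homeomorphism of $R$ returns $S^3\setminus N(K)$ as a mapping torus. So the theorem reduces to the assertion that \emph{a taut balanced sutured manifold whose sutured Floer homology has rank one is a product}.

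To connect the hypothesis to this assertion I would invoke Juhász twice. First, $\mathit{SFH}(S^3(K))\cong\HFKa(K)$, the splitting of $\mathit{SFH}$ along relative $\Spinc$ structures matching the Alexander grading~\cite{Juhasz}. Second, Juhász's surface decomposition formula~\cite{JuhaszDecomp} identifies $\mathit{SFH}$ of the result of decomposing along a sufficiently nice surface with the sum of the ``outer'' $\Spinc$-summands of the original $\mathit{SFH}$; for a genus-$g$ Seifert surface the outer summand is exactly the extremal Alexander grading, so $\mathit{SFH}(M_R,\gamma_R)\cong\HFKa_*(K,g)$. The hypothesis therefore says precisely that $\mathit{SFH}(M_R,\gamma_R)$ has rank one (over $\Field$; with some care over coefficients the same holds over $\Z$, which is the more convenient ring here), and tautness independently forces $\mathit{SFH}(M_R,\gamma_R)\neq 0$.

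It then remains to prove the structural statement: a taut balanced sutured manifold that is not a product has $\mathit{SFH}$ of rank at least two. Assume $(M_R,\gamma_R)$ is taut but not a product. Then there is a \emph{horizontal} surface $S$ --- properly embedded, representing $\pm[R]$ in $H_2(M_R,\partial M_R)$ --- that is not isotopic to either copy of $R$ in $\partial M_R$; producing such a surface uses incompressibility, norm-minimality, and ultimately Gabai's sutured manifold hierarchy~\cite{DiskDecomposition}. Since $S$ is homologous to the norm-minimizing $R$, decomposing $(M_R,\gamma_R)$ along $S$, and separately along $S$ with reversed orientation, yields two taut balanced sutured manifolds whose sutured Floer homologies realize, by Juhász's decomposition theorem, the ``outer'' and the ``inner'' $\Spinc$-summands of $\mathit{SFH}(M_R,\gamma_R)$ relative to $S$. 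Non-parallelism of $S$ makes both summands nonzero and supported on disjoint sets of $\Spinc$ structures, so $\mathit{SFH}(M_R,\gamma_R)$ has rank at least two --- a contradiction. (Packaged more carefully this is an induction: one runs a Gabai hierarchy, Juhász's theorem keeping $\mathit{SFH}$ a nonzero direct summand, hence of rank one, at each well-groomed taut decomposition, down to product pieces where the claim is transparent.) Hence $(M_R,\gamma_R)$ is a product and $K$ is fibered.

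I expect the structural step to be the main obstacle: passing from the algebraic input --- rank one, preserved along the hierarchy --- to the geometric conclusion that $(M_R,\gamma_R)$ is an honest product. Two points make this delicate. First, one must arrange Gabai's hierarchy so that Juhász's decomposition theorem genuinely applies at each stage --- well-groomed decomposing surfaces with no closed components, tautness on both sides --- which is not automatic and takes work. Second, and more fundamentally, rank-one $\mathit{SFH}$ on its own only detects that $(M_R,\gamma_R)$ is a \emph{homology} product, and homology products need not be products; ruling out the non-product homology products is exactly where the geometric content of sutured manifold theory --- the existence and behaviour of horizontal surfaces, norm estimates, Gabai's hierarchy --- is indispensable and cannot be substituted by Floer-theoretic rank counts. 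The genus-one case is easier because there the fiber candidate $R$ is a once-punctured torus, whose self-homeomorphisms and complementary pieces are constrained enough that this geometric step can be done by hand, which is why Ghiggini settled it before Ni's general argument.
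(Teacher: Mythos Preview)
The paper does not contain a proof of this theorem: it is a survey, and the statement is simply quoted from the literature with citations to Ghiggini and Ni, followed by ``See also~\cite{JuhaszDecomp}.'' So there is no in-paper argument to compare your proposal against. Your sketch follows the sutured Floer homology route that the paper's ``see also'' reference points to --- translate the extremal Alexander-grading hypothesis into $\mathit{SFH}(M_R,\gamma_R)\cong\Field$ via Juh\'asz's decomposition formula, then argue that rank-one $\mathit{SFH}$ forces a product sutured manifold --- and you have correctly flagged that the second step (ruling out non-product homology products) is where the real work lies and where Gabai's hierarchy is indispensable. That is an accurate high-level outline of the Juh\'asz/Ni approach; just be aware that the paper itself offers nothing beyond the citation.
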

See also~\cite{JuhaszDecomp}.

Ni's theorem, combined with  Theorem~\ref{thm:LSpaceKnots}, immediately gives the following:
\begin{cor}~\cite{YiNiFibered}
  If $K\subset S^3$ is a knot with the property that $S^3_r(K)$ is a lens space,
  then $K$ is fibered.
\end{cor}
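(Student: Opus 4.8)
**

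The plan is to combine the two ingredients cited in the statement: Theorem~\ref{thm:LSpaceKnots} (the ``L-space knot'' result) and Ni's fiberedness theorem. Suppose $K\subset S^3$ admits a lens space surgery $S^3_r(K)=L(p,q)$. The surgery coefficient $r$ is either positive or negative; after passing to the mirror $\mirror(K)$ if necessary (which replaces $r$ by $-r$ and $K$ by its mirror, preserving both the lens space property and the fiberedness property, since a knot is fibered if and only if its mirror is), we may assume $r>0$. Now Theorem~\ref{thm:LSpaceKnots} applies: it tells us that the knot Floer homology of $K$ is determined by its Alexander polynomial, and in particular that for every $s\in\Z$, the group $\HFKa_*(K,s)$ has dimension $0$ or $1$.

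The key step is to examine the top Alexander grading. Let $g=g(K)$ be the Seifert genus. By Theorem~\ref{thm:GenusBounds}, $g=\max\{s\mid \HFKa_*(K,s)\neq 0\}$ (using the symmetry of knot Floer homology), so $\HFKa_*(K,g)\neq 0$. But by the preceding paragraph this group has dimension at most $1$, hence exactly $1$. In the notation of Theorem~\ref{thm:LSpaceKnots}, $g=\alpha_0$, the top exponent appearing in the Alexander polynomial, and the corresponding summand of $\HFKa$ is one-dimensional. Thus $\HFKa_*(K,g)$ is one-dimensional.

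Now invoke Ni's theorem (the converse to fiberedness, stated just above): if $\HFKa_*(K,g)$ is one-dimensional, then $K$ is fibered. This completes the argument. The only points requiring care are the reduction to $r>0$ via mirroring — one should check that a lens space surgery on $\mirror(K)$ with coefficient $-r$ follows from the orientation-reversing diffeomorphism $S^3_{-r}(\mirror(K))\cong -S^3_r(K)$, and that $-L(p,q)$ is again a lens space — and the identification of the top Alexander grading with $\alpha_0$, which is immediate from the explicit form $\Delta_K(t)=\sum_{k=0}^n(-1)^k t^{\alpha_k}$ with $\{\alpha_k\}$ decreasing, so that $\alpha_0$ is the degree of $\Delta_K$, which equals $g(K)$ by Theorem~\ref{thm:GenusBounds} once we know $\HFKa$ is ``thin'' enough for the genus detection to pin down the degree. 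In fact there is no real obstacle here: all the hard work is in the two cited theorems, and the corollary is a one-line deduction once they are in hand.
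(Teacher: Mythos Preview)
Your argument is correct and is exactly the deduction the paper has in mind: Theorem~\ref{thm:LSpaceKnots} forces $\HFKa_*(K,s)$ to have dimension at most $1$ for every $s$, and then Ni's theorem gives fiberedness. Your extra care with mirroring to arrange $r>0$ and your appeal to Theorem~\ref{thm:GenusBounds} to guarantee $\HFKa_*(K,g)\neq 0$ are the natural details behind the word ``immediately''; the only superfluous remark is the comment about thinness at the end --- Theorem~\ref{thm:GenusBounds} already identifies $g(K)$ with the top nonvanishing Alexander grading without any further hypothesis.
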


So far, we have focused on applications on the simplest variant of
knot Floer homology, $\HFKa(K)$.  The version $\HFKm(K)$, with its
module structure over $\Field[U]$, has further applications to the
unknotting number and the {\em{slice genus}} of a knot, which we
recall here. Thinking of $S^3$ as a boundary of the four-ball $B^4$,
one can consider {\em slice surfaces}: smoothly embedded surfaces in $B^4$, so
that $F\setminus \partial F$ is mapped to $B^4\setminus \partial B^4=S^3$, and
$\partial F$ is mapped to $K\subset S^3$. The {\em slice genus} of a
knot $K$, denoted $g_4(K)$, is the minimal genus of any slice surface
for $K$. A knot is called a {\em slice knot} if its slice genus is
$0$. Clearly, the Seifert genus of $K$ bounds the slice genus of $K$: $g_4(K)\leq g(K)$.

An {\em unknotting} of $K$ is a sequence of knots $K=K_0,
K_1,\dots,K_n$, where $K_i$ is obtained from $K_{i-1}$ by changing one
crossing, so that $K_n$ is the unknot. The {\em unknotting number} of
$K$, denoted $u(K)$, is the minimal length of any unknotting for
$K$. An $n$-step unknotting for $K$ naturally gives rise to an
immersed surface in $B^4$ with $n$ double points. Resolving these
double-points, we can find a slice surface for $K$ with genus
$n$. This proves the bound $g_4(K)\leq u(K)$.

The module structure $\HFKm(K)$, and specifically the associated
integral invariant $\tau$, gives a lower bound on the slice genus according to
the following:
\begin{thm}
  \label{thm:SliceGenus}
  For any knot $K\subset S^3$, 
  $|\tau(K)|\leq g_4(K)$.
\end{thm}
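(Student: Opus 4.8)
The plan is to reduce to the one-sided bound $\tau(K)\le g_4(K)$ and prove that by combining a surgery formula with the adjunction inequality for Heegaard Floer cobordism maps, following the original argument of Ozsv\'ath--Szab\'o. Since $\tau(\mirror(K))=-\tau(K)$ while $g_4(\mirror(K))=g_4(K)$, applying the one-sided bound to $K$ and to its mirror yields $|\tau(K)|\le g_4(K)$. Throughout I would work with the description of $\tau$ through the knot filtration: $K$ induces a filtration $\{\mathcal F(K,s)\}_{s\in\Z}$ of $\CFa(S^3)$ by subcomplexes, and $\tau(K)$ is the smallest $s$ for which $H_*(\mathcal F(K,s))\to\HFa(S^3)\cong\Field$ is onto; this is equivalent to the definition via non-torsion classes in $\HFKm(K)$ by the structure theory of the knot complex. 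I take this equivalence, the isotopy invariance of $\tau$, and the existence and naturality of Heegaard Floer cobordism maps as known.

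First I would turn the slice surface into a closed surface inside a four-manifold. Fix $g=g_4(K)$ and a smooth properly embedded genus $g$ surface $F\subset B^4$ with $\partial F=K$. For a large positive integer $N$, let $W=W_N(K)\colon S^3\to S^3_N(K)$ be the cobordism obtained from $[0,1]\times S^3$ by attaching a single two-handle along $K\subset\{1\}\times S^3$ with framing $N$, and set $X=\overline{B^4}\cup_{S^3}W$, so $\partial X=S^3_N(K)$ and $H_2(X;\Z)\cong\Z$. Gluing $F$ to the core of the two-handle along $K$ produces a closed surface $\widehat F\subset X$ of genus $g$, generating $H_2(X;\Z)$, with $\widehat F\cdot\widehat F=N>0$.

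Next I would invoke the surgery formula referred to above to locate $\tau(K)$ inside the Heegaard Floer homology of the large surgery $S^3_N(K)$: that formula identifies $\HFa(S^3_N(K))$, split over $\Spinc(S^3_N(K))\cong\Z/N$, with the homologies of the ``large surgery'' complexes assembled from $\{\mathcal F(K,s)\}$, and matches the maps induced by the pieces of the surgery cobordism with the corresponding maps of filtered complexes. The upshot I need is that there is a $\Spinc$ structure $\mathfrak s$ on $X$ restricting to the summand of $\Spinc(S^3_N(K))$ labeled (up to the normalization constants) by $\tau(K)$, with $\langle c_1(\mathfrak s),[\widehat F]\rangle$ equal (up to the same constants) to $2\tau(K)-N$, for which the relevant cobordism map is nontrivial. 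On the other hand, since $\widehat F\subset X$ has genus $g\ge1$ and positive square, the Heegaard Floer adjunction inequality forces every such $\mathfrak s$ to satisfy $\langle c_1(\mathfrak s),[\widehat F]\rangle+\widehat F\cdot\widehat F\le 2g-2$. Substituting, the $N$'s cancel and, once the various grading and $\Spinc$-structure shifts built into the surgery formula are reconciled, one is left with $\tau(K)\le g$. The case $g=0$ has to be handled separately, since then $\widehat F$ is a sphere and adjunction says nothing: for a slice knot one uses instead that a slice disk makes the zero-surgery $S^3_0(K)$ bound a homology $S^1\times D^3$, which pins down enough of its Heegaard Floer homology to force $\tau(K)=0$.

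The step I expect to be the main obstacle is precisely this bridge between $\tau$ and four-manifold topology: establishing the large surgery formula --- the identification of $\HFa(S^3_N(K),[s])$ and of the induced maps with the filtered-complex data --- and, even granting it, carrying out the grading and $\Spinc$-structure bookkeeping carefully enough that the ``nontriviality at $s=\tau(K)$'' statement and the adjunction constraint ``$\langle c_1,[\widehat F]\rangle+\widehat F\cdot\widehat F\le 2g-2$'' assemble with the correct constants into the sharp inequality $\tau(K)\le g$ (and not something off by one). One could alternatively phrase the proof via the stronger cobordism bound $|\tau(K_1)-\tau(K_2)|\le g$ for knots cobounding a genus $g$ surface in $S^3\times[0,1]$ and then specialize to $K_2=$ unknot, but establishing that bound requires essentially the same surgery machinery, so the obstacle is unchanged. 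The analytic inputs --- invariance of $\tau$, the cobordism maps and their composition law, and the adjunction inequality itself --- I would take as established.
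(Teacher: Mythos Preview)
The paper is a survey and does not actually prove this theorem: immediately after the statement it writes ``The above is proved in~\cite{FourBall}; see also~\cite{RasmussenThesis} \dots\ Sucharit Sarkar gave a combinatorial proof \dots\ see~\cite{Sarkar}.'' So there is no proof in the paper to compare against; your outline is in fact a sketch of the argument from the cited source \cite{FourBall} (reduce to the one-sided bound via $\tau(\mirror(K))=-\tau(K)$, cap a slice surface with the core of a two-handle, read off $\tau$ from the large-surgery description of $\HFa(S^3_N(K))$, and constrain the relevant $\Spinc$ structure by adjunction).

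One substantive caution on the very hazard you flag. With the numbers exactly as you wrote them, plugging $\langle c_1(\mathfrak s),[\widehat F]\rangle=2\tau(K)-N$ and $[\widehat F]^2=N$ into $\langle c_1(\mathfrak s),[\widehat F]\rangle+[\widehat F]^2\le 2g-2$ yields $\tau(K)\le g-1$, which is false already for the trefoil ($\tau=g_4=1$). In the actual argument the sign of the framing, the direction of the cobordism map, and the precise $\Spinc$/grading labeling in the large-surgery formula combine to produce exactly $\tau(K)\le g$; getting these conventions aligned is not optional bookkeeping but the content of the proof, so when you carry this out you should track them against \cite{FourBall} rather than the schematic you wrote here.
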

The above is proved in~\cite{FourBall}; see
also~\cite{RasmussenThesis} for other similar bounds. Sucharit Sarkar gave a
combinatorial proof of Theorem~\ref{thm:SliceGenus} from the perspective of ``grid
diagrams''; see~\cite{Sarkar}; see also~\cite[Chapter~8]{GridBook}.

Direct computation shows that for the $(p,q)$ torus knot $T_{p,q}$, $\tau(T_{p,q})=\frac{(p-1)(q-1)}{2}$.
Thus, Theorem~\ref{thm:SliceGenus} gives another verification of following theorem of Kronheimer and Mrowka, first conjectured by Milnor~\cite{MilnorConjecture}:
\begin{thm}~\cite{KMMilnor}
\label{thm:MilnorConjecture}
For relatively prime integers $p$ and $q$, the torus knot $T_{p,q}$ has
\[ u(T_{p,q})=g_4(T_{p,q})=\frac{(p-1)(q-1)}{2}.\]
\end{thm}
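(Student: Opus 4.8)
The plan is to prove the chain of (in)equalities
\[
\frac{(p-1)(q-1)}{2} \;=\; \tau(T_{p,q}) \;\le\; g_4(T_{p,q}) \;\le\; u(T_{p,q}) \;\le\; \frac{(p-1)(q-1)}{2},
\]
whose two ends coincide, so that every term equals $\frac{(p-1)(q-1)}{2}$; this yields both asserted equalities simultaneously. The middle inequality $g_4(K)\le u(K)$ is the elementary observation recalled above: an $n$-step unknotting of $K$ traces out an immersed disk in $B^4$ with $n$ double points, which resolve to a genus-$n$ slice surface. The inequality $\tau(T_{p,q})\le g_4(T_{p,q})$ is a direct application of Theorem~\ref{thm:SliceGenus}, using that $\tau(T_{p,q})\ge 0$ so no absolute value intervenes.

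For the last inequality I would invoke the classical upper bound $u(T_{p,q})\le\frac{(p-1)(q-1)}{2}$: presenting $T_{p,q}$ as the closure of the braid $(\sigma_1\cdots\sigma_{q-1})^p$ on $q$ strands and changing crossings so as to pass from $T_{p,q}$ to $T_{p-2,q}$, a short induction on $p$ produces an unknotting of the stated length. This step is purely combinatorial, and I would cite rather than reproduce it.

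The substantive step is the computation $\tau(T_{p,q})=\frac{(p-1)(q-1)}{2}$. Here I would use Theorem~\ref{thm:LSpaceKnots}: by Moser's classification of Dehn surgeries on torus knots, $S^3_{pq+1}(T_{p,q})$ is a lens space, so with $r=pq+1>0$ that theorem (and the analogous statement for $\HFKm$ coming from the same surgery formula) determines the bigraded module $\HFKm(T_{p,q})$ explicitly from $\Delta_{T_{p,q}}(t)$. In the resulting ``staircase'' description, the unique free $\Field[U]$-summand furnished by Proposition~\ref{prop:Structure} is generated in Alexander grading $-g(T_{p,q})$, and $g(T_{p,q})=\frac{(p-1)(q-1)}{2}$ is the degree of $\Delta_{T_{p,q}}$ (equivalently the Seifert genus, by Theorem~\ref{thm:GenusBounds}, and the top nonzero Alexander grading of $\HFKa(T_{p,q})$ by~\eqref{eq:Alex} together with the symmetry of the Alexander polynomial). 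Since $\tau(K)$ is defined as $-1$ times the maximal Alexander grading of a $U$-nontorsion class in $\HFKm(K)$, this gives $\tau(T_{p,q})=g(T_{p,q})=\frac{(p-1)(q-1)}{2}$.

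Combining the four ingredients closes the displayed chain and proves $u(T_{p,q})=g_4(T_{p,q})=\frac{(p-1)(q-1)}{2}$. I expect the main obstacle to be the $\tau$-computation, i.e. pinning down $\HFKm(T_{p,q})$; the remaining inequalities are soft once $\tau(T_{p,q})$ is in hand. One could bypass the surgery input for the \emph{slice-genus upper bound} by pushing the standard Seifert surface of $T_{p,q}$ into $B^4$ to obtain $g_4(T_{p,q})\le\frac{(p-1)(q-1)}{2}$ directly, but the value of $\tau(T_{p,q})$ still requires knowing the knot Floer homology of torus knots, which remains the crux.
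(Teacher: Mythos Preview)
Your proposal is correct and matches the paper's approach: the paper likewise argues by sandwiching, citing the classical upper bound on $u(T_{p,q})$, the inequality $g_4\le u$, Theorem~\ref{thm:SliceGenus}, and the fact that $\tau(T_{p,q})=\frac{(p-1)(q-1)}{2}$. The only difference is that the paper attributes the $\tau$-value to ``direct computation'' without further comment, whereas you route it through Theorem~\ref{thm:LSpaceKnots} and Moser's lens-space surgeries; since torus knots are $(1,1)$ knots (as noted in the paper), an alternative and perhaps more literal reading of ``direct computation'' is to read off $\HFKm$ combinatorially from a genus-one doubly-pointed Heegaard diagram, but your route is equally valid.
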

It is easy to see that the quantity appearing in the above theorem
also coincides with the Seifert genus of $T_{p,q}$.  Kronheimer and
Mrowka's proof of the above theorem used Donaldson invariants. A
number of alternative proofs have emerged
since. Rasmussen~\cite{RasmussenSlice} gave the first combinatorial
proof, using the algebraic structure on Khovanov's knot invariants;
see also
see~\cite{Sarkar}.

There are non-orientable analogues of the slice genus, defined as
follows.   Consider possibly non-orientable surfaces $F$
embedded in $B^4$, meeting $S^3$ along $K$, and let $\gamma_4(K)$,
the {\em non-orientable $4$-genus of $K$}
denote the minimal complexity, as measured by the dimension of $H_1(F;\Field)$, for all such choices
of $F$. 
For example, the torus knot $T_{2,2n+1}$ bounds a
$n+\frac{1}{2}$-twisted M{\"o}bius strip, so $\gamma_4(T_{2,2n+1})=1$.

Prior to 2012, the best lower bound on $\gamma_4$ for any knot was $3$.
The situation was vastly improved by the following theorem of Joshua Batson:

\begin{thm}~\cite{Batson}
  \label{thm:Batson}
  The 
  non-orientable $4$-genus can be arbitrarily large; for example,
$\gamma_4(T_{2k,2k-1})=k-1$.
\end{thm}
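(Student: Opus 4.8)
The plan is to prove the two halves of Theorem~\ref{thm:Batson} separately: first establish a lower bound $\gamma_4(T_{2k,2k-1})\geq k-1$ using the $\tau$-invariant together with a signature-type correction term, and then exhibit an explicit non-orientable spanning surface realizing $H_1(F;\Field)$ of dimension $k-1$.

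For the lower bound, the key input is an analogue of Theorem~\ref{thm:SliceGenus} adapted to \emph{non-orientable} surfaces. If $F\subset B^4$ is a non-orientable surface with $\partial F=K$ and first Betti number $b_1(F)=\dim H_1(F;\Field)$, one wants an inequality of the shape
\[
  \left|\tau(K)-\tfrac{\sigma(K)}{4}\right|\leq \tfrac{b_1(F)}{2}+\text{(small correction)},
\]
so that knowing $\tau(T_{2k,2k-1})=\tfrac{(2k-1)(2k-2)}{2}=(k-1)(2k-1)$ and computing $\sigma(T_{2k,2k-1})$ forces $b_1(F)\geq k-1$. The mechanism is the standard one: push $F$ into $B^4$, take the double branched cover of $B^4$ along $F$, and analyze the induced Heegaard Floer data on the resulting four-manifold with boundary $\Sigma_2(S^3,K)$; the non-orientability of $F$ contributes via the correction term / $d$-invariant of the branched double cover, and $\tau$ controls how far a slice-type class can sit. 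First I would set up this branched-cover inequality carefully, tracking the normal Euler number of $F$ (which for a non-orientable surface need not vanish and must be handled), and then simply plug in the known values of $\tau$ and $\sigma$ for the torus knots $T_{2k,2k-1}$.

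For the upper bound $\gamma_4(T_{2k,2k-1})\leq k-1$, I would build an explicit non-orientable surface. The torus knot $T_{2k,2k-1}$ lies on a standardly embedded torus; using the obvious genus-minimizing Seifert surface together with crossing changes, or directly the ``plumbing of Möbius bands'' picture, one gets a surface in $B^4$ with $b_1=k-1$. Concretely, $T_{2k,2k-1}$ can be unknotted by a controlled number of crossing changes between strands of opposite sign, and each such move can be traded for attaching a band that introduces one generator of $H_1$ while keeping the surface non-orientable; tallying the moves gives $k-1$. This half is a direct topological construction and should be routine once the right picture is chosen.

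The main obstacle is the lower bound, and specifically getting the \emph{correct constant}: a naive branched-cover argument typically yields a bound that is off by a factor or loses track of the normal Euler number, and the sharp statement $\gamma_4(T_{2k,2k-1})=k-1$ requires the inequality to be tight. The delicate point is relating $\tau(K)$ to the $d$-invariant of $\Sigma_2(S^3,K)$ in a way that is sensitive to non-orientability — essentially reproving Batson's slice-obstruction inequality — and then verifying that the torus knots saturate it. I would expect to spend most of the effort pinning down this inequality (including the parity/Euler-number bookkeeping) and confirming via the explicit surface from the upper bound that equality indeed holds.
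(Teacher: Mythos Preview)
Your two-step outline (explicit surface for the upper bound, Floer-theoretic inequality for the lower bound) matches the shape of both proofs the paper describes, but the lower-bound mechanism you propose does not work, and it is not what either argument actually does.

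The paper records two proofs. Batson's original argument obtains the lower bound from a Heegaard Floer invariant of \emph{surgeries} on $K$ --- concretely, $d$-invariants of certain $S^3_n(K)$ --- not from the branched double cover. The alternative proof, due to Ozsv\'ath--Stipsicz--Szab\'o and sketched immediately after the theorem in the paper, introduces a new invariant $\upsilon(K)$ coming from the ``unoriented'' knot Floer complex $\uCFK$ (differential weighted by $U^{n_w+n_z}$, graded by $\delta=M-A$), proves the inequality
\[
\bigl|\upsilon(K)-\tfrac{\sigma(K)}{2}\bigr|\;\le\;\gamma_4(K),
\]
and then computes $\upsilon(T_{2k,2k-1})$ directly. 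Neither proof uses $\tau$.

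This is the genuine gap in your proposal: the invariant $\tau$ is adapted to \emph{oriented} cobordisms, and there is no inequality of the form $|\tau(K)-\sigma(K)/4|\le b_1(F)/2+\cdots$ bounding the non-orientable genus. The quantity you would extract from $d$-invariants of the branched double cover $\Sigma_2(S^3,K)$ is the Manolescu--Owens $\delta$, not $\tau$; and in any case that route does not produce a sharp bound here. The whole point of introducing $\upsilon$ (and the modified complex $\uCFK$) is precisely that $\tau$ is blind to non-orientable surfaces. So your plan would stall at the step ``set up this branched-cover inequality carefully'': the inequality you are hoping for does not exist with $\tau$ in it, and replacing $\tau$ by the correct invariant $\upsilon$ requires building the unoriented complex and proving its slice bound, which is the actual content of the OSS argument.

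Your upper-bound sketch is fine in spirit and matches what Batson does: an explicit non-orientable surface with $b_1=k-1$.
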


Batson's proof goes by constructing an explicit surface with stated
complexity, to give an upper bound on $\gamma_4(T_{2k,2k-1})$.
Next,
he gives a lower bound on $\gamma_4(T_{2k,2k-1})$ via a Heegaard Floer
invariant associated to surgeries on the knot. 

An alternative proof of the above theorem is given in joint work of
Andr{\'a}s Stipsicz and the authors~\cite{Unorient}, using another variant of
knot Floer homology. This version is the homology of a chain complex
$\uCFK(\HD)$  which, like $\CFKm(\HD)$, is freely
generated over $\Field[U]$ by the Heegaard states; but it is  equipped
with a differential
\[ \uda\x =\sum_{\y\in \States}\sum_{\{\phi\in\pi_2(\x,\y)\big|\mu(\phi)=1\}}\#\left(\frac{\ModFlow(\phi)}{\R}\right) U^{n_w(\phi)+n_z(\phi)} \y.\]
This complex is equipped with the single grading
$\delta(\x)=M(\x)-A(\x)$. It is straightforward to check that $\uda$
drops $\delta$-grading by $1$, as does multiplication by $U$.
Proposition~\ref{prop:Structure} has the following analogue:
\begin{prop}~\cite[Proposition~3.5]{Unorient}
  The bigraded module $\uHFK(K)$ is finitely generated; in fact, it
  consists of direct summands of the form $\Field[U]/U^m$ for various
  choices of $m$, and a single free summand $\Field[U]$.
\end{prop}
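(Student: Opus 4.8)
The plan is to argue by the same strategy as for Proposition~\ref{prop:Structure}: from the structure theorem for finitely generated modules over the principal ideal domain $\Field[U]$, together with a single localization computation which this time one bootstraps off Proposition~\ref{prop:Structure} itself. To begin, $\uCFK(\HD)$ is a free $\Field[U]$-module of finite rank (one basis element for each of the finitely many Heegaard states) and $\uda$ is $\Field[U]$-linear, so $\uHFK(K)$ is a finitely generated $\Field[U]$-module; by the structure theorem it is $\uHFK(K)\cong\Field[U]^{r}\oplus\bigoplus_i\Field[U]/p_i^{a_i}$ for some $r\ge 0$ and irreducible $p_i\in\Field[U]$, and it remains only to show $r=1$ and that each $p_i=U$.

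Next I would put the Alexander filtration on $\uCFK(\HD)$, extending the Alexander function to the free module by $A(U)=-1$. Since $A(\x)-A(\y)=n_z(\phi)-n_w(\phi)$, a term $U^{n_w(\phi)+n_z(\phi)}\y$ of $\uda\x$ has Alexander value $A(\x)-2n_z(\phi)$; thus $\uda$ is filtered, it lowers the Alexander filtration by the \emph{even} number $2n_z(\phi)$, and the associated graded complex keeps precisely the terms with $n_z(\phi)=0$ and coefficient $U^{n_w(\phi)}$ — that is, the associated graded of $(\uCFK(\HD),\uda)$, with its $\Field[U]$-module structure, is exactly $\CFKm(\HD)$. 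The filtration is finite in each $\delta$-degree (fixing $\delta(\x)$ and the power of $U$ pins down both), so the associated spectral sequence converges, with $E_1$-page $\HFKm(K)$ abutting to $\uHFK(K)$.

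The heart of the matter is to invert $U$. Localization is exact, hence commutes with homology and with associated gradeds, yielding a spectral sequence with $E_1$-page $U^{-1}\HFKm(K)$ abutting to $U^{-1}\uHFK(K)$. By Proposition~\ref{prop:Structure} every torsion summand of $\HFKm(K)$ is $U$-power torsion, so $U^{-1}\HFKm(K)\cong\Field[U,U^{-1}]$ is free of rank one, its generator coming from the free summand of $\HFKm(K)$; since on that summand $U$ lowers the Maslov grading by $2$ and the Alexander grading by $1$, the $E_1$-page is supported on a single line in the spectral sequence bigrading (homological degree against Alexander filtration level). Each differential $d_r$ lowers the homological degree by $1$ and the Alexander filtration by $r$, but by the parity observation above only the even $d_r$ can be nonzero, and any such $d_r$ sends classes to bidegrees off that line; hence all differentials vanish, $E_1=E_\infty$, and $U^{-1}\uHFK(K)\cong\Field[U,U^{-1}]$.

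Localizing the structure-theorem decomposition then gives $U^{-1}\uHFK(K)\cong\Field[U,U^{-1}]^{r}\oplus\bigoplus\Field[U,U^{-1}]/p_i^{a_i}$, the sum running over those $i$ with $p_i\neq U$ (the others become zero); comparing with $\Field[U,U^{-1}]$ forces $r=1$ and the absence of any such $p_i$, so $\uHFK(K)\cong\Field[U]\oplus\bigoplus_i\Field[U]/U^{m_i}$. I expect the main obstacle to be the bookkeeping in the two middle steps: checking that the associated graded of the Alexander filtration really is $\CFKm(\HD)$ as an $\Field[U]$-complex, verifying convergence of the localized spectral sequence, and confirming that the grading positions leave no room for the higher differentials. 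Everything else is the standard module theory over a PID already underlying Proposition~\ref{prop:Structure}.
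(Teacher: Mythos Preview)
The paper does not give its own proof of this proposition; it simply records the statement with a citation to \cite[Proposition~3.5]{Unorient} and moves on, so there is nothing in the present paper to compare your argument against.

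That said, your argument is sound. The only point worth sharpening is the sentence identifying the associated graded with $\CFKm$ ``with its $\Field[U]$-module structure'': since $U$ lowers the Alexander grading by one, it does not act on a single piece $F_p/F_{p-1}$ but rather maps $\mathrm{gr}_p\to\mathrm{gr}_{p-1}$, so what you really mean is that the \emph{total} associated graded $\bigoplus_p\mathrm{gr}_p$, with this shift-by-one $U$-action, is $\CFKm$. Once phrased that way, the spectral sequence is one of $\Field$-complexes carrying a compatible $U$-action, localization at $U$ commutes with every page, and your degree count goes through: the localized $E_1\cong\Field[U,U^{-1}]$ sits on a single line $q=M(\x_0)-2A(\x_0)$ in the $(p,q)$-plane, while $d_r$ shifts $q$ by $r-1$, so $d_r=0$ for $r\ge 2$; the parity observation (the original differential drops $A$ only by even amounts) disposes of $d_1$. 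Convergence is not an issue because in each fixed $\delta$-degree only finitely many monomials $U^k\x$ appear, so the filtration is bounded there even after inverting $U$. The closing PID argument is exactly right.
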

We can now define $\upsilon(K)$ to be the maximal $\delta$-grading of
any $U$-non-torsion element in $\uHFK(K)$.
Theorem~\ref{thm:Batson} can be proved via a computation of $\upsilon(T_{2k,2k-1})$, combined
with the following bound on the non-orientable $4$-genus in terms of $\upsilon$, analogous to 
Theorem~\ref{thm:SliceGenus}:
\begin{thm}~\cite{Unorient}
        For any knot $K\subset S^3$, $|\upsilon(K)-\frac{\sigma(K)}{2}|\leq \gamma_4(K)$.
\end{thm}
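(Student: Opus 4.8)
The argument will run parallel to the proof of $|\tau(K)|\le g_4(K)$ (Theorem~\ref{thm:SliceGenus}), with crossing changes replaced by twisted band moves (``crosscap additions'') and the Seifert genus replaced by the non-orientable genus $\gamma_4$. Two preliminary inputs about $\upsilon$ are needed that are not recorded above: first, that $\upsilon$ is a concordance invariant, which follows --- exactly as for $\tau$ --- from the stable equivalence of the ($t=1$-deformed) knot Floer chain complexes of concordant knots, so that the distinguished free $\Field[U]$ summand in the structure theorem for $\uHFK$ stated above, and hence $\upsilon$, is unchanged; and second, the mirroring formula $\upsilon(\overline K)=-\upsilon(K)$, together with $\sigma(\overline K)=-\sigma(K)$ and $\gamma_4(\overline K)=\gamma_4(K)$, which reduces the theorem to the one-sided bound $\upsilon(K)-\tfrac{\sigma(K)}{2}\le \gamma_4(K)$.

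\emph{Reduction to a local estimate.} Fix a non-orientable surface $F\subset B^4$ with $\partial F=K$ and $\dim_\Field H_1(F;\Field)=\gamma_4(K)=:k$. Putting $F$ in Morse position with respect to the radial function on $B^4$, and using the surface normal-form relation that trades an orientable handle together with a crosscap for three crosscaps, one may present $F$ as a disk bounding the unknot with $k$ twisted bands attached. Reading this movie from the unknot outward produces a sequence $\Unknot=K_0,K_1,\dots,K_{k}$ of knots in which $K_k$ is concordant to $K$ and each $K_{i+1}$ is obtained from $K_i$ by a single twisted band move (where, between consecutive stages, one may pass through links; the estimate below is carried out at the level of the relevant link Floer complexes). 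The theorem then follows by telescoping once we have the local estimate
\[ \Bigl|\bigl(\upsilon(K_{i+1})-\tfrac{\sigma(K_{i+1})}{2}\bigr)-\bigl(\upsilon(K_i)-\tfrac{\sigma(K_i)}{2}\bigr)\Bigr|\le 1 \qquad (0\le i\le k-1), \]
since then $\bigl|\upsilon(K)-\tfrac{\sigma(K)}{2}\bigr|\le\sum_{i=0}^{k-1}1=k$, using $\upsilon(\Unknot)=\sigma(\Unknot)=0$ and the concordance invariance of $\upsilon$ and $\sigma$ for the final stage.

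\emph{The local estimate.} A twisted band move is realized by a saddle cobordism, which induces a map on the ($t=1$-deformed) knot/link Floer homology; one shows this map, and a companion map going the other way, are on the distinguished $\Field[U]$-towers $U$-equivariant and shift the $\delta$-grading by an amount governed by the Euler characteristic of the band and by the number of half-twists in it. The number of half-twists also controls the change in signature under the band move, via the Gordon--Litherland formula applied to the spanning surfaces before and after. The content of the estimate is that these two dependences on the twisting cancel, so that $\upsilon(K_i)-\tfrac{\sigma(K_i)}{2}$ moves by at most $1$ under each band move, independently of how the band is twisted.

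\textbf{Main obstacle.} The difficulty is concentrated in the local estimate, and specifically in the grading bookkeeping: one must identify precisely the $\delta$-grading shift of the saddle cobordism map on the tower of the $\uda$-complex (equivalently, of the $t=1$ specialization), rather than on the usual $\dm_K$-complex, and match it against the signature change coming from the Gordon--Litherland formula so that the net change in $\upsilon-\sigma/2$ is exactly $\pm1$ or $0$ --- not off by a constant growing with $k$ or with the twisting. This requires an adjunction-type control on cobordism maps adapted to the unoriented differential, together with care about orientations and normalizations (including the indeterminacy in $\upsilon$ when an intermediate stage is a link) so that the two contributions combine cleanly. The passage through link stages, and the attendant need for a link version of $\upsilon$ with its behaviour under band moves, is a further point that must be handled carefully.
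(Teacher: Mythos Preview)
The paper you are working from is a survey; it states this theorem with a citation to \cite{Unorient} and gives no proof or proof sketch of its own. There is therefore nothing in this paper to compare your proposal against.

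That said, a few remarks on your outline relative to the argument in \cite{Unorient}. Your overall strategy---reduce to a local estimate for a single non-orientable band move and telescope---is the right shape, and the idea that the signature shift from Gordon--Litherland cancels the twisting-dependent part of the $\delta$-grading shift is exactly the mechanism. However, your ``reduction to a local estimate'' step is not quite right as written: a generic non-orientable slice surface in Morse position is a sequence of births, oriented and unoriented saddles, and deaths, and passing through two-component links is unavoidable. You gesture at this, but the place where real work happens in \cite{Unorient} is precisely in setting up an unoriented skein exact triangle for $\uHFK$ (relating a knot, its unoriented resolution, and the same knot with a shifted grading) and extracting from it the one-step bound on $\upsilon-\sigma/2$; this replaces the cobordism-map grading analysis you propose. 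Your proposal to track a cobordism map directly on the $\uda$-complex and match its grading shift against Gordon--Litherland is plausible in spirit but would require building that map and proving its tower-preservation and grading properties from scratch, which is more than is needed once the skein triangle is available. Your ``Main obstacle'' paragraph correctly identifies where the content lies, but the resolution in \cite{Unorient} is via the triangle rather than via an adjunction-type inequality for non-orientable cobordism maps.
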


Analyzing the slice genus is a place where smooth four-dimensional
topology has a clear interaction with knot theory. The slice surfaces
whose genus is minimized in the definition are thought of as smoothly
embedded in $B^4$. Relaxing this requirement, we could ask for locally
flat, topologically embedded surfaces, to obtain an analogous
numerical knot invariant, called the {\em topological slice genus},
$\gTopSlice(K)$.

In a related direction, one can say that two knots $K_1$ and $K_2$ are
concordant if there is an embedded annulus $F$ in $[1,2]\times S^3$ so
that $F\cap (\{i\}\times S^3)$ is the knot $K_i$; or, equivalently, if
$K_1\# \mirror(K_2)$ is a slice knot, where here $\mirror(K)$ denotes
the mirror of $K$. The connected sum operation endows this set with
the structure of an Abelian group, called the {\em smooth concordance
  group} $\Conc$. If we require the annulus to be only topologically
embedded, or equivalently, if we require $K_1\#\mirror(K_2)$ to be
only topologically slice, we obtain another group, the {\em
  topological concordance group}, denoted $\ConcTop$.  There is a
canonical homomorphism $\Conc\to \ConcTop$, whose kernel is the subgroup of topologically slice knots, 
$\ConcTS$.

Litherland~\cite{Litherland} showed that $\ConcTop$ contains a direct
summand of $\Z^{\infty}$.  According to a theorem of Freedman, any knot
with $\Delta_K(t)=1$ is topologically slice. Using Donaldson's
diagonalizability theorem, Andrew Casson showed that $\ConcTS$ is
non-trivial; see~\cite{CochranGompf}.  In fact, the $0$-twisted
Whitehead double of the trefoil, a knot for which $\tau(K)=1$, gives a
$\Z$-direct summand in $\ConcTS$~\cite{LivingstonComputations}.

Using gauge theory, in 1995 Endo exhibited a $\Z^{\infty}$ subgroup of
$\ConcTS$.  In 2012, Jen Hom~\cite{JenHomInfinitelyGenerated} went
further, exhibiting a $\Z^{\infty}$ direct summand in $\ConcTS$ by
constructing infinitely many linearly independent concordance
homomorphisms from $\ConcTS$ to $\Z$.  Her construction uses an
invariant~$\epsilon$, which can be viewed as derived from knot Floer
homology $\HFKwz(K)$ over $\Field[U,V]/UV$. Using $\epsilon$, she
introduces an equivalence relation on the knot Floer complexes, to form 
a totally ordered Abelian group. The homomorphism are then provided by
the axiom of choice.

In joint work with Stipsicz~\cite{Upsilon}, we constructed another
collection of homomorphisms to $\Z$, using a one-parameter deformation
of knot Floer homology $\tHFK(K)$.  Specifically, for each rational $t=\frac{p}{q}\in
[0,2]$, there is a chain complex $\tCFK(K)$, freely generated over
$\Field[v^{1/q}]$ by Heegaard states,  whose differential now has the form
\[ \dt\x =\sum_{\y\in \States}\sum_{\{\phi\in\pi_2(\x,\y)\big|\mu(\phi)=1\}}\#\left(\frac{\ModFlow(\phi)}{\R}\right) v^{t n_w(\phi)+(2-t)n_z(\phi)} \y.\]
This complex is graded by $\gr_t(\x)=M(\x)-tA(\x)$, so that
multiplication by $v$ drops grading by $1$. When $t=0$, the complex is
independent of the knot, and its homology is simply $\Field[v]$. When
$t=1$, the complex is $\uCFK$ considered above.  Define
$\Upsilon_K(t)$ to be the maximal grading of any $v$-non-torsion
element; in particular, $\upsilon(K)=\Upsilon_K(1)$. Like Hom's homomorphisms, $\Upsilon$
detects $\Z^{\infty}$ direct summands in $\ConcTS$~\cite{Upsilon}; see
~\cite{LivingstonNotes} for an alternative formulation of the invariant
$\Upsilon$ and see~\cite{Chen,FellerParkRay} for further developments.

\section{Heegaard diagrams}

To understand knot Floer homology, it is useful to have 
several possible Heegaard diagrams in hand. The first Heegaard diagram, which we will call the
{\em standard diagram for a knot projection}, is determined as follows.

\subsection{The standard diagram for a knot projection}
Fix a knot projection $\Diag$ for $K$ in ${\mathbb R}^2$, together
with a distinguished edge adjoining the infinite region in the
projection complement. The edge is distinguished by placing a star
somewhere on the edge, as shown on the left in
Figure~\ref{fig:StandardDiagram}.  We call this data a {\em decorated
knot projection} of $K$.  To a decorated knot projection, we can associate a
Heegaard diagram representing $K$, as follows. First, singularize the
projection, so that the crossings are actually double-points. Next,
take a regular neighborhood of the resulting planar graph, to obtain a
handlebody $H$ embedded in ${\mathbb R}^3\subset S^3$. The regions in
the complement of the graph in the plane have two distinguished
regions that adjoin the marked edge, one of which is the infinite
region in $\R^2$. For each bounded region in the graph complement,
there is a corresponding $\alpha$-circle. In a neighborhood of each
crossing, we associate a $\beta$-circle as pictured in
Figure~\ref{fig:StandardDiagram}. Transverse to the marking on the
distinguished edge, we choose also a final $\beta$-circle, again as
shown in Figure~\ref{fig:StandardDiagram}, and place the basepoint $w$
and $z$ on either side of it. Note that $\Sigma$ is oriented as
$-\partial H$.

\begin{figure} \centering \input{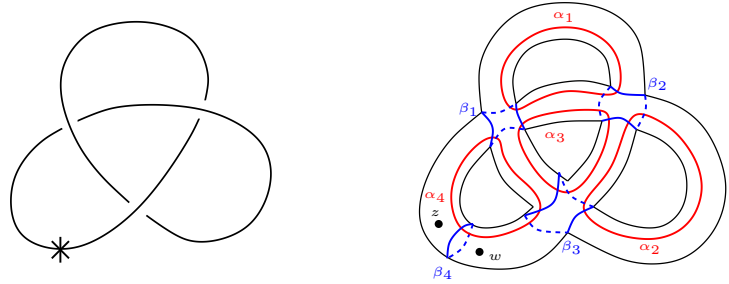}
\caption{{\bf Doubly-pointed Heegaard diagram for the left-handed
trefoil.}}
\label{fig:StandardDiagram}
\end{figure}

We recall here Kauffman's construction of the Alexander polynomial~\cite{Kauffman}.
\begin{defn}
A {\em Kauffman state} for a decorated knot
projection of $K$ is a map $\state$ that associates to each vertex of $G$ one
of the four in-coming quadrants, subject to the following constraints:
\begin{itemize}
\item The quadrants assigned by $\state$ to distinct vertices are subsets of
  distinct bounded regions in ${\mathbb R}^2\setminus G$.
\item The quadrants of the
  bounded region that meets the distinguished edge 
  are not assigned by $\state$ to any of the vertices in $G$.
\end{itemize}
\end{defn}
See Figure~\ref{fig:KauffmanStates} for examples.
\begin{figure}[ht]
\input{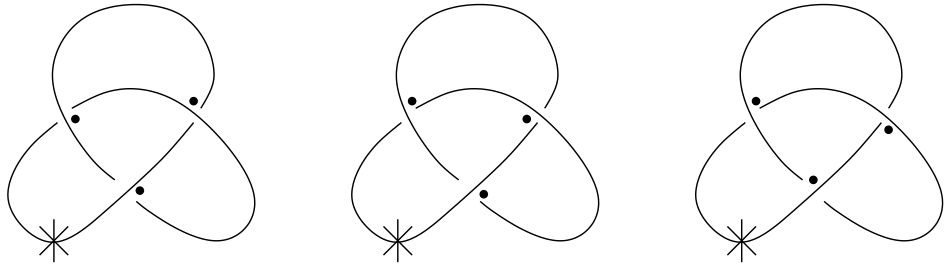}
\caption{\label{fig:KauffmanStates} 
  {\bf{Kauffman states for the left-handed trefoil.}}
   Here all three of the states for this projection.}
\end{figure}

\begin{defn}
  Label the four quadrants about each crossing with $0$, and
  $\pm \OneHalf$, according to the orientations as specified in the
  first line of Figure~\ref{fig:LocalContribs}. The {\em Alexander
  function} of a Kauffman state $\state$, $A(\state)$, is a sum, over
  each crossing, of the contribution of the quadrant occupied by the
  state.  The {\em Maslov function} of a Kauffman state $\state$ is
  obtained similarly, only now the local contributions are as
  specified in the second line of Figure~\ref{fig:LocalContribs}.
\end{defn}

 \begin{figure}[h]
 \centering
 \input{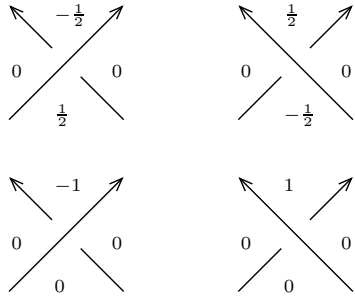}
 \caption{{\bf Local Alexander and
     Maslov contributions.}  The first row illustrates the local Alexander contributions,
and the second the local Maslov contributions of each crossing.}
 \label{fig:LocalContribs}
 \end{figure}

Let $\States=\States(\Diag)$ denote the set of Kauffman states.
Kauffman shows that the  Alexander polynomial is computed by
\[ \Delta_K(t) = \sum_{\x\in\States} (-1)^{M(\x)} t^{A(\x)}.\]
(Note that Kauffman does not define $M(\x)$, which is not needed
for the Alexander polynomial, only its parity.)

To put Kauffman states in even more familiar territory, recall that a
knot projection can be given a checkerboard coloring, coloring each
region on the graph complement black and white so that the two regions
meeting along each edge are colored differently. There is a planar
graph, the ``black graph'', whose vertices correspond to the black
regions in the checkerboard coloring, and whose edges correspond to
crossings in the decorated knot diagram. There is a straightforward one-to-one correspondence
between maximal subtrees in the black graph and Kauffman states; see~\cite{Kauffman}.

The relevance of Kauffman states to knot Floer homology is the following
observation from~\cite{AltKnots}: the Heegaard states in
the standard diagram for a knot projection correspond to the Kauffman
states of the marked projection, via a correspondence which
identifies the corresponding Maslov and Alexander functions.

Although this information is insufficient to compute knot Floer
homology, since the differentials count pseudo-holomorphic disks, it does give
computations in many cases. For example, an elementary argument shows
that for an alternating diagram, $A(\x)-M(\x)$ is independent of the
Kauffman state $\x$.  A little more work shows that for an alternating
knot, $M(\x)-A(\x)=\frac{\sigma(K)}{2}$. Theorem~\ref{thm:AltKnots} is
an immediate consequence of these considerations.
Eun-Soo Lee~\cite{EunSooLee} has shown that a corresponding result also holds for
Khovanov homology~\cite{Khovanov,BarNatan}.

\subsection{$(1,1)$ diagrams} In knot theory, a knot is said to have
a type $(g,b)$ representation if there is a genus $g$ Heegaard
splitting for $S^3$ in which the knot meets each of the two
handlebodies as a union of $b$ standard, unknotted
arcs~\cite{Doll}. Thus, the doubly-pointed Heegaard diagrams described
above give type $(g,1)$ representations of knots.

There is a class of knots for which the Heegaard Floer homology
particularly easy to compute, which can be represented on the torus,
equipped with two basepoints; i.e. which have representations of type
$(1,1)$.  Explicitly, suppose that $\Sigma$ is a surface of genus
$1$, equipped with two basepoints $w$ and $z$, and two curves $\alpha$
and $\beta$ which are isotopic (via an isotopy that crosses $w$ and
$z$) to two curves $\alpha'$ and $\beta'$ that meet transversely in a
single intersection point. Knots with such representations include all
torus knots and all $2$-bridge knots (knots on which there is a height
function with $4$ critical points: $2$ maxima and $2$ minima);
see~\cite{ChoiKo} for a classification.

For a $(1,1)$ knot, the Heegaard Floer homology takes
place in the first symmetric product of the torus $\Sigma$, i.e. within $\Sigma$
itself. Thus, the holomorphic disk counts are combinatorial;
see~\cite{Knots,GodaMatsudaMorifuji}. 

\subsection{Grid diagrams}
A Heegaard diagram representing a $(g,b)$ decomposition can be
represented by a Heegaard surface $\Sigma$ equipped now with $g+b-1$
$\alpha$-curves and $g+b-1$ $\beta$-curves and $2b$ basepoints
$w_1,\dots,w_b$ and $z_1,\dots,z_b$.  The $\alpha$-curves are required
to be pairwise disjoint, and to span a half-dimensional subspace of
$H_1(\Sigma)$; the $\beta$-curves are required to satisfy the same
property. By our homological conditions, the surface obtained by
cutting $\Sigma$ along the $\alpha$-curves has $b$ connected
components.  Our diagrams will satisfy the following
additional property: each of these connected components is required to
have exactly one $w$-basepoint and one $z$-basepoint. Cutting $\Sigma$
along the $\beta$-curves is gives $b$ components, and each component
is required to have exactly one $w$-basepoint and one
$z$-basepoint. This data specifies a three-manifold $Y$ by the natural generalization of the earlier construction:
attach three-dimensional two handles to $ [-1,1]\times \Sigma$ along $\{-1\}\times \alpha_i$
and $\{1\}\times \beta_j$. We are left with a three-manifold whose boundary consists of a collection of two-spheres.
Fill each two-sphere with a three-ball.

We can construct an oriented link in $Y$ that meets $\Sigma$ in
$\ws\cup\zs$, by the following construction.  In each component of
$\Sigma\setminus(\alpha_1\cup\dots\cup\alpha_{g+b-1})$, find an arc
that connects the corresponding $w_i$ and $z_j$, and push that arc
into $[-1,0]\times \Sigma$, so that it meets $\{0\}\times \Sigma$
exactly at $w_i$ and $z_j$.  Find corresponding arcs in
$\Sigma\setminus\betas$, and push those into $[0,1]\times \Sigma$. 
The two types of basepoints give one-to-one correspondences
\[ f_\ws\colon \pi_0(\Sigma\setminus\alphas)\to \pi_0(\Sigma\setminus\betas)
\qquad{\text{and}}\qquad f_\zs\colon \pi_0(\Sigma\setminus\alphas)\to \pi_0(\Sigma\setminus\betas);\]
so $f_{\zs}^{-1}\circ f_{\ws}$ is a permutation of $\pi_0(\Sigma\setminus\alphas)$.
That permutation can be written as a product of cycles; and the number of cycles in the
description gives the number of components of the resulting link.

Heegaard Floer homology has a generalization to this construction, as
well. The ambient symplectic manifold now is $\Sym^{g+b-1}(\Sigma)$, equipped
with two $g+b-1$-dimensional tori $\Ta$ and $\Tb$.  The chain complex
$\CFKm(\HD)$ now is defined over the polynomial algebra
$\Field[U_1,\dots,U_{b}]$, with differential given by
\begin{equation}
  \label{eq:MultipleBasepoints}
 \dm(\x)=\sum_{\y\in\States} \sum_{\{\phi\in\pi_2(\x,\y)|\Mas(\phi)=1, n_{z_1}(\phi)=\dots=n_{z_{b}}(\phi)=0\}}
\#\left(\frac{\ModFlow(\phi)}{\R}\right) U_1^{n_{w_1}(\phi)}\cdots U_{b}^{n_{w_b}(\phi)} \y.
\end{equation}

When the multiply-pointed Heegaard diagram represents a knot $K$, then
all of the $U_i$ variables act the same in homology, and the resulting
$\Field[U]$-module is isomorphic to the bigraded knot Floer homology
$\HFKm(K)$ described earlier; cf.~\cite{MOS, OSlinks}.

This observation is especially powerful for a particular class of
Heegaard diagrams called {\em grid diagrams}, where $\Sigma$ has genus
$1$, all of the $\alpha$-curves are parallel (i.e. isotopic to one
another), and all the $\beta$-curves are parallel. 

It is a classical result that every knot has such a diagram: indeed, a
projection for a knot with $c$ crossings can be turned into a grid
diagram for $K$ with $b=c+2$ $\alpha$-curves and $\beta$-curves.
Moreover, these diagrams are also ``nice'' in the sense introduced by
Sucharit Sarkar. Sarkar showed that for certain Heegaard diagrams, the
holomorphic disk counts appearing in the Heegaard Floer differential have an explicit,
topological formulation~\cite{SarkarWang}. The key
result of Ciprian Manolescu, Sucharit Sarkar, and the second author
in~\cite{MOS} states that the holomorphic disk counts appearing in
Equation~\eqref{eq:MultipleBasepoints} for grid diagrams is a
combinatorial count of certain embedded rectangles in the Heegaard
torus. In~\cite{BaldwinGillam}, these techniques are used to compute
the knot Floer homology groups of knots with $\leq 12$ crossings.

The resulting chain complex, whose generators correspond to
permutations and whose differential counts embedded rectangles, can
be taken as a definition for the theory rather than a computation.
Invariance can be formulated and proved within the realm of grid
diagrams: there is a well understood set of moves that connect any two
grid diagrams representing the same knot~\cite{Cromwell,Dynnikov}. One
can construct isomorphisms between the corresponding ``grid homology
groups'', to show that the result is knot invariant.  This is the
approach taken in~\cite{MOST}; see also~\cite{GridBook}.

The basic setup of grid homology requires little machinery: gone are
the pseudo-holomorphic curves, replaced instead by embedded
rectangles.  This makes the material perhaps more accessible to
students trying to enter the subject.  The perspective offered by
grids naturally points to further applications, especially to
Legendrian knot theory~\cite{OST,NOT}; see
also~\cite[Chapter~12]{GridBook}. Moreover, some of the topological
applications have proofs purely within the framework of grid diagrams.
As pointed out earlier, the slice genus bounds have a combinatorial
formulation (see Theorems~\ref{thm:SliceGenus}
and~\ref{thm:MilnorConjecture} above). Some non-orientable $4$-genus
bounds (see Theorem~\ref{thm:Batson}) have combinatorial
proofs~\cite{Unorient}.  

Working entirely in the world of grid diagrams does have some
disadvantages, though.  At present, many of the topological
applications cannot be understood from the grid perspective. More
frustratingly, the chain complexes associated to grid diagrams tend to
be large and unwieldy. For a knot represented by an $n\times n$
grid diagram, the grid chain complex has $n!$ generators.

Much has been written on the topic of grid diagrams, so we refer the
interested reader to the literature. We will focus instead on a
different more algebraic computational
approach~\cite{BorderedKnots,HFKa2,HolKnot}, motivated by ``bordered
Floer homology''~\cite{InvPair}.

\section{Bordered Preliminaries}
Bordered Floer homology is an invariant for three-manifolds with
boundary introduced in 2008, Robert Lipshitz, Dylan Thurston, and
the second author~\cite{InvPair,PNAS}. 
This theory associates a differential graded algebra to a
surface $F$ equipped with a parameterization, $\Alg(F)$.  To an
oriented three-manifold $Y_1$, equipped with an identification $F\cong \partial Y_1$,
the bordered theory associates an $\Ainfty$ module over this algebra,
denoted $\CFAa(Y_1)$. For an oriented three-manifold $Y_2$ whose
boundary is identified with $-{\widehat F}$, the theory associates an
algebraic object, called a ``type $D$ structure'' $\CFDa(Y_2)$, over
$\Alg(F)$, which can be thought of as a kind of
free differential module over $\Alg(F)$.  The module operations are
defined by certain pseudo-holomorphic disks occurring in naturally
adapted Heegaard diagrams that represent bordered three-manifolds.  
We recall here some of the formal aspects of this
theory, as they serve as a motivation for some algebraic constructions
for knot Floer homology which we will describe later.

As a preliminary point, recall that differential graded algebra $\Alg$ is a graded vector space
$\Alg$ equipped with an associative multiplication and a differential, which are compatible by the Leibniz rule
\[ d(a\cdot b)=(da)\cdot b + a\cdot (db).\] We suppress signs here, as
we are working with coefficients in $\Zmod{2}$.  
Sometimes the differential and the multiplication are
denoted by the more uniform notation
\[ \mu_1\colon \Alg\to \Alg\text\qquad{\text{and}}\qquad \mu_2\colon \Alg\otimes\Alg\to \Alg.\]
Then, the structure relations are
$\mu_1\circ \mu_1 = 0$, $\mu_2(\mu_2(a,b),c)+\mu_2(a,\mu_2(b,c))=0$ (associativity), and
$\mu_1(\mu_2(a,b))=\mu_2(\mu_1(a),b)+\mu_2(a,\mu_1(b))$.

Differential graded algebras have a natural generalization to $\Ainfty$ algebras~\cite{Keller}, which are graded vector spaces
$\Alg$ equipped with a sequence of maps
\[ \{\mu_n\colon \Alg^{\otimes n}\to \Alg\}_{n=1}^{\infty},\]
satisfying an infinite sequence of structure relations (generalizing
the three structure relations for differential graded algebras stated
above), called the {\em $\Ainfty$ relations}. To state these, it is
useful to think of planar trees $T$, with $k$ inputs and one output.
Each such tree gives rise to a map $\mu(T)\colon \Alg^{\otimes k}\to
\Alg$, where each vertex with valence $d$ is labelled by the operation
$\mu_{d-1}$. The $\Ainfty$ relation with $k$ inputs, states the sum of
$\mu(T)$, taken over all trees $T$ with $k$ inputs and exactly two
internal vertices, vanishes.  For example, there is a single tree with
two internal vertices: it is the linear tree with two valence two
vertices. So the $\Ainfty$ relation in this case states that
$\mu_1\circ\mu_1=0$. A more interesting example is shown in
Figure~\ref{fig:AinftyRelation}.  From this perspective, a
differential graded algebra is an $\Ainfty$ algebra with $\mu_n=0$ for
all $n\geq 3$.
\begin{figure} \centering \input{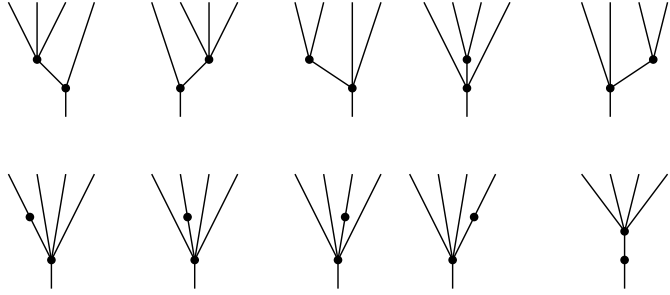}
\caption{{\bf The $\Ainfty$ relation with four inputs.}
The sum of the operations associated to these trees vanishes; for example, 
the tree on the top left contributes $\mu_2(\mu_3(a_1,a_2,a_3),a_4)$.}
\label{fig:AinftyRelation}
\end{figure}

The $\Ainfty$  relations can alternatively be formalized as follows. Consider the ``bar complex'', the vector space
\[ {\mathrm{Bar}}(\Alg) = \bigoplus_{i=1}^{\infty} \Alg^{\otimes i}, \]
equipped with the endomorphism
\begin{equation} 
\label{eq:dbar}
\dbar(a_1\otimes \dots\otimes a_n) =\sum_{r\geq 0,s>0, r+s\leq n} (a_1\otimes\dots\otimes a_r)
   \otimes \mu_{s}(a_{r+1}\otimes\dots \otimes a_{r+s})\otimes (a_{r+s+1}\otimes\dots\otimes a_n).
\end{equation}
The $\Ainfty$ relation is equivalent to the condition that $\dbar\circ\dbar=0$.

Over a differential graded algebra $\Alg$, it is natural to consider differential graded modules $M$,
which are equipped with a differential $m_1\colon M\to M$, and an associative action $m_2\colon M\otimes \Alg\to M$.
These objects have a natural $\Ainfty$ generalizations:
an $\Ainfty$ module $M$ is a graded vector space equipped with a sequence of maps
\begin{equation}
        \label{eq:DefAct}
        \{m_n\colon M\otimes \Alg^{\otimes(n-1)}\to M\}_{n=1}^{\infty}.
\end{equation}
Again, these are required to satisfy an $\Ainfty$ relation, which is exactly as in the case for algebras, with the
understanding that now all trees $T$ have a distinguished leftmost strand (corresponding to $M$), along which
all vertices are labelled with $m_i$, rather than $\mu_i$ which labels all other vertices.

Stasheff~\cite{Stasheff} introduced $\Ainfty$ algebras in his study of
algebraic topology for $H$-spaces.  They have since resurfaced in a
number of settings: for example, they have taken a central role in
symplectic geometry~\cite{KontsevichICM,SeidelBook}; they have found
applications in gauge theory~\cite{KMOS,Bloom,KMKhovanov}; and of
course they are also at the heart of bordered Floer homology. Although
we will not need $\Ainfty$ algebras in our subsequent discussions, we
will be considering $\Ainfty$ modules over differential graded
algebras.

Next, we recall the notation of a  $D$ structure over a differential graded
algebra $\Alg$, which is another key player in the bordered theory.
A type $D$ structure  is a graded vector space $X$, equipped with a map
\begin{equation}
        \label{eq:DefTypeD}
         \delta^1\colon X \to \Alg\otimes X 
\end{equation}
satisfying a structure equation
\[ (\mu_2\otimes \Id_X)\circ (\delta^1\otimes \Id_X) \circ \delta^1  + 
(\mu_1\otimes \Id_X)\circ \delta^1 = 0.\]
More concretely, if $X$ has a basis $\{\x_i\}_{i=1}^n$, we can write
\[ \delta^1(\x_i)=\sum_{j=1}^n a_{i,j}\otimes \x_j,\]
for $a_{i,j}\in \Alg$. The structure relation takes the form
\[ d a_{i,k} + \sum_{j} a_{i,j}\cdot a_{j,k}=0.\]

There is a natural pairing between $\Ainfty$ modules $M$ and type $D$
structures $X$ over $\Alg$~\cite{InvPair}, denoted $M\DT X$, defined as follows.
Iterate $\delta^1$ to define a map
\[ \delta^j\colon X \to \Alg^{\otimes j}\otimes X.\]
More precisely, define $\delta^j$ inductively 
by $\delta^0=\Id_{X}$, and \[ \delta^j=
(\Id_{\Alg^{\otimes (j-1)}}\otimes \delta^1) \circ \delta^{j-1}\] for $j>0$; e.g.
\begin{align*}
\delta^2(x)& =(\Id_{\Alg}\otimes \delta^1)\circ\delta^1 \\
\delta^3(x)&=(\Id_{\Alg\otimes\Alg}\otimes \delta^1)\circ 
(\Id_{\Alg}\otimes \delta^1)\circ \delta^1.
\end{align*}
Equip the vector space $M\otimes X$ with the endomorphism
\[ D(p\otimes x)= \sum_{j=0}^{\infty} (m_{j+1}\otimes \Id_X)\circ
\delta^j(x).\] In general, the sum defining $D$ may not be finite;
but there are some instances where it is. For example, the module $M$ is said
to be {\em algebraically bounded} $m_j=0$ for all $j$ sufficiently
large; and a type $D$ structure $X$ is said to be {\em algebraically
  bounded} if $\delta^j=0$ for all $j$ sufficiently large. Boundedness
of either structure is sufficient to ensure finite sums in the
definition of $D$.

In cases where $D$ is well defined, $D^2=0$; i.e. $(M\otimes X,D)$ is a chain complex.
This chain complex is
denoted $M\DT X$ and it agrees with the derived tensor product of the two
$\Ainfty$ modules underlying $M$ and $\Alg\otimes X$; see~\cite{InvPair}.

A key property of bordered Floer homology is a {\em pairing theorem},
which, for a three-manifold $Y$ decomposed as $Y=Y_1\cup_{F} Y_2$,
expresses $\HFa(Y)$ in terms of the above pairing between the type $D$ and the type $A$ structures of the pieces,
$\HFa(Y)\simeq H(\CFAa(Y_1)\DT \CFDa(Y_2))$.

Bimodules have a natural generalization to the $\Ainfty$
setting. Informally, if $\Alg_1$ and $\Alg_2$ are differential graded
algebras, a type $DA$ bimodule $\lsup{\Alg_1}X_{\Alg_2}$ is an object which can be viewed as 
a type $D$ structure over $\Alg_1$, but it also has higher operations
\[ \delta^1_{i+1}\colon X \otimes \Alg_2^{\otimes i}\to \Alg_1\otimes X,\]
satisfying an appropriate $\Ainfty$ relation~\cite{Bimodules}.

Bimodules play the following role in the bordered theory. Recall that
modules associated to a three manifold depend on the boundary
parameterization.  To each mapping class $\phi\colon F\to F$ there is
a corresponding bimodule with the property that if $Y_1'$
is obtained by composing the boundary parameterization of $Y_1$ with
$\phi$, then $\CFDa(Y_1')$ is the tensor product of $\CFDa(Y_1)$ with
the associated bimodule; see~\cite{Bimodules}.

Bordered Floer homology can be used to effectively compute $\HFa(Y)$
(with $\Field$ coefficients). The key point is that the bimodules
associated to mapping class group generators can be computed
explicitly~\cite{HFa}.  Thus, if we start from a Heegaard decompositon
of $Y$, thought of as a union of two standardly framed handlebodies,
glued via an identification $\phi$, which is expressed as a product of
the mapping class group generators, then $\HFa(Y)$ can be obtained as
an iterated tensor product, where the two outermost factors are the
modules associated to the standard handlebodies, and the inner factors
are the bimodules associated to the mapping class group generators appearing in
the factorization of $\phi$. Conversely, this description can be taken
as the definition of $\HFa(Y)$, and its topological invariance
properties can be verified by some model computations. This is the
perspective pursued in work of Bohua Zhan~\cite{Zhan}.

We will describe next an analogous bordered formulation for computing
knot Floer homology; compare also~\cite{Zarev, PetkovaVertesi}.

\section{Bordered algebras and knot invariants}

Bordered knot Floer homology,
defined in~\cite{BorderedKnots} and~\cite{HFKa2},
is a technique for computing knot Floer
homology, which can be thought of as obtained from slicing a decorated
knot projection along horizontal slices.
Specifically, cut the decorated knot projection into slices
$y=t_1<\dots<t_m$, with the following properties:
\begin{itemize}
\item 
the portion of the diagram with $y\leq t_1$ consists of a single strand with 
the global minimum on it
\item the portion with $y \geq t_m$ consists of a single strand with
  the global maximum on it.
\item 
each portion of the diagram
with $t_i\leq y \leq t_{i+1}$ is one of the following three
standard pieces: a local maximum, a local minimum, or a crossing.
\end{itemize}
To each $y=t_i$ slice of the diagram, we will associate an algebra. To
each standard piece we associate a bimodule over the two algebras
associated to its boundary.  A chain complex computing the invariant is then obtained by
tensoring together all of these bimodules.  These generators
correspond to Kauffman states; and indeed generators of the
intermediate bimodules correspond to certain ``partial'' Kauffman
states.  The homology of the resulting chain complex is a knot
invariant.  We describe these ingredients in a little more detail
presently.

\subsection{Partial knot diagrams}

For generic $t$, a decorated knot projection $\Diag$ in the $(x,y)$
plane meets the line $y=t$ in $2n$ transverse points.  We will draw
our diagram so that the distinguished star is the global minimum $y_0$ of
the function $y$ restricted to the projection.

The portion of the diagram contained in the half-space 
in $y\geq t$, for generic $t>y_0$,  is called a {\em upper knot diagram}.

Fix an upper diagram, and suppose that it meets the $y=t$ slice at the
$2n$ points $\{(i,t)\}_{i=1}^{2n}$. These intersection points divide
the $y=t$ line into $2n+1$ connected components
$J_0=(-\infty,1),J_1=(1,2), \dots,J_{2n-1}=(2n-1,2n),J_{2n}=(2n,\infty)$. An
{\em idempotent state} $\x$ is an $n$-element subset of $\{0,\dots,2n\}$. 

An {\em upper Kauffman state} for an upper diagram $y\geq t$ is a pair
$(\state,\x)$ where $\state$ is a  function
that associates to each crossing in the upper diagram
one of the four adjacent quadrants, and $\x$ is an idempotent state for
the $y=t$ slice of the diagram, subject to the following constraints:
\begin{itemize}
\item The quadrants assigned by $\state$ to different crossings are
  subsets of distinct bounded regions in the upper diagram. (The
  regions which contain quadrants assigned by $\state$ are called
  {\em occupied}.)
    \item The unbounded region meets none of the intervals in $\x$
 \item Each bounded, unoccupied bounded region contains exactly one of the intervals
  appearing in $\x$ on its boundary.
\end{itemize}

\begin{figure}[h] 
                  \centering \input{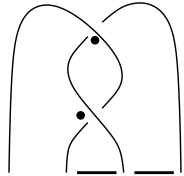} \caption{{\bf An
upper Kauffman state.} The black dots represent the quadrants assigned
by $\kappa$; the dark intervals on the bottom represent $\x$. This
upper diagram has five states.}  \label{fig:UpperDiagTref} \end{figure}

Note that any Kauffman state can be restricted to an upper diagram to
give an upper Kauffman state. 

Upper Kauffman states have the following generalization.  A {\em
  partial knot diagram} is a portion of a knot diagram contained in
the $(x,y)$ plane with $t_2\leq y \leq t_1$, so that $t_1$ and $t_2$
are generic.

A {\em partial Kauffman state} is a triple of data $(\state,\x,\y)$, where
$\x$ is a collection of components in the $y=t_2$ slice, 
$\y$ is a collection of components in the $y=t_1$ slice, and $\state$
is a map that associates to each crossing one of its four adjacent
regions, subject to certain constraints. 
\begin{itemize}
        \item Distinct crossings are assigned by $\state$ to quadrants contained in distinct regions
        in the partial diagram.
      \item If $R$ is occupied, then $\y$ contains all the intervals
        in $R\cap (y=t_1)$ and $\x$ contains none of the intervals in
        $R\cap (y=t_2)$.
      \item If $R$ is unoccupied, then either
            $R$ meets the $y=t_1$ slice,  $\y$ contains all but
        one of the edges of $R\cap(y=t_1)$,  and $\x$ contains none
        of the intervals in $R\cap (y=t_2)$; 
        or $\y$ contains all of the intervals in $R\cap(y=t_1)$ (which now can be empty)
         and $\x$ contains exactly one of the intervals
        in the slice $R\cap(y=t_2)$.
\end{itemize}

\begin{example}
\label{ex:TrivialDiag}
  Consider the partial knot diagram consisting of $2n$ vertical lines.
  In this partial diagram, the partial Kauffman states
  $(\state,\x,\y)$ have $\x=\y$, an arbitrary $n$-element subsets of
  $\{0,\dots,2n\}$; and $\state$ has no information (as there are no
  crossings).
\end{example}

\begin{example}
  \label{ex:LocalMax} Consider the partial knot diagram consisting of
  $2n$ vertical lines, and a single additional strand which contains a
  local maximum, so that two of the strands meet the bottom; see
  Figure~\ref{fig:LocalMaximum}.  Assume that the maximum does not
  appear in the unbounded region. Then, there is a region $R$ in the
  diagram that meets the top boundary in its $(c-1)^{st}$ interval,
  and it meets the bottom boundary its $(c-1)^{st}$ and $(c+1)^{st}$
  intervals.  The partial Kauffman state is then uniquely determined
  by $\x$, which necessarily contains $c$. There are three cases; a
  state is said to be of type $\XX$ if
  $\x\cap \{c-1,c,c+1\}=\{c-1,c\}$, it is of type $\YY$ if
  $\x\cap \{c-1,c,c+1\}=\{c,c+1\}$, and $\ZZ$ if
  $\x\cap \{c-1,c,c+1\}=\{c\}$.  
  See Figure~\ref{fig:LocalMaximum}.
\end{example}

\begin{figure}[h] 
                  \centering \input{MaximumDiag.pstex_t} \caption{{\bf
Partial Kauffman states for the local maximum.} We have drawn here three partial Kauffman states, one of each type.}  
\label{fig:LocalMaximum} \end{figure}

\begin{example}
        \label{ex:Crossing}
  Consider the partial knot diagram consisting of $2n$ strands drawn
  so that the $i^{th}$ and $(i+1)^{st}$ cross exactly once.  There are
  four kinds of partial Kauffman states, according to which of the
  four regions is assigned to the crossing: $\NN$, $\SS$, $\EE$, or
  $\WW$.  For the crossing of type $\NN$, $i\in \x\cap \y$ and
  $\x=\y$; for a crossing of type $\SS$, $i\not\in \x\cup \y$ and
  $\x=\y$; for a crossing of type $\WW$, $i-1\in \y$, $i\not\in \y$,
  $i-1\not\in\x$ and $i\in \x$, and $\y\setminus\{i-1\}=\x\setminus\{i\}$;
  for a crossing of type $\EE$, $i+1\in\y$, $i\not\in \y$,
  $i+1\not\in\x$, $i\in \x$, and $\y\setminus\{i+1\}=\x\setminus\{i\}$.
\end{example}

\begin{figure}[h] 
                  \centering \input{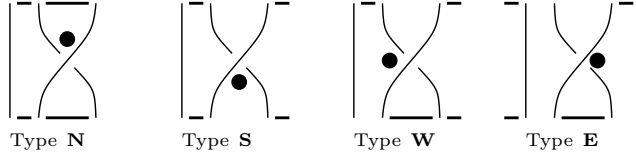} \caption{{\bf
Partial Kauffman states for crossings.} We have drawn here three four Kauffman states, one of each type.}  
\label{fig:CrossingProj} \end{figure}

\subsection{Algebras}
We explain how to associate an algebra to each horizontal slice of a knot diagram. The
horizontal slice can be thought of as a collection of $2n$ points on
the real line. The portion of the knot projection above this
horizontal slice gives a pairing between the $2n$
points. Specifically, if we slice the projection at the line $y=t$, so
that the knot meets the line in points $\{(i,t)\}_{i=1}^{2n}$, then
$i$ and $j$ are matched if $(i,t)$ and $(j,t)$ are joined by an arc in
the diagram contained in the portion of the diagram where $y\geq
t$. We denote this data by $\Matching$.  We will now define the
corresponding algebra $\Alg(n,\Matching)$.

As a preliminary step, we define an algebra $\Blg_0(m,k)$ associated
to $m$ points $\{1,\dots,m\}$ and an integer $0\leq k\leq m+1$;
see~\cite{BorderedKnots}.  The algebra is defined over the polynomial
algebra $\Field[U_1,\dots,U_m]$; and it is is equipped with a set of
preferred idempotents, which correspond to $k$ element subsets of
$\{0,\dots,m\}$ or, equivalently, increasing functions
$\x\colon \{1,\dots,k\}\to \{0,\dots,m\}$ called {\em idempotent
states}.  Let $I(\x)$ denote the idempotent corresponding to the
idempotent state.

Given any idempotent states $\x,\y$, the $\Field[U_1,\dots,U_m]$-module
$I(\x)\cdot \Blg_0(m,k)\cdot I(\y)$ is isomorphic to 
$\Field[U_1,\dots,U_m]$;i.e. it is given with a preferred
generator, which we denote $\gamma_{\x,\y}$.

Thus, for any $\x,\y,\z$, $\gamma_{\x,\y}\cdot \gamma_{\y,\z}=
P_{\x,\y,\z} \cdot \gamma_{\x,\z}$ for some
$P_{\x,\y,\z}\in \Field[U_1,\dots,U_m]$.
To specify the multiplication
on $\Blg_0(m,k)$, it suffices to specify the $P_{\x,\y,\z}$
for all triples of idempotent states, which we do as follows.
Each idempotent state $\x$ has a {\em weight vector} $v^{\x}\in \Z^m$, with components given by
\[ v^{\x}_i=\# \{x\in \x\big| x\geq i\}.\]
Let $P_{\x,\y,\z}$ to be the monomial in $\Field[U_1,\dots,U_m]$,
$U_1^{n_1}\cdots U_m^{n_m}$, where $n_i$ is given by 
\[         n_i =  \OneHalf(|v^{\x}_i - v^{\y}_i| +|v^{\y}_i-v^{\z}_i|-|v^{\x}_i-v^{\z}_i|)
\]

Let $L_i \in \Blg_0$ with $i\in \{1,\dots,m\}$
be the sum of the generators $\gamma_{\x,\y}\in I_\x\cdot \Blg_0\cdot I_\y$ taken over all pairs of idempotent states 
$\x$ and $\y$ with the property that
\[ v^\x_j-v^\y_j=\left\{\begin{array}{ll}
1 &{\text{if $i=j$}} \\
0 &{\text{otherwise.}}
\end{array}\right.\]
Similarly, define $R_i$ to be the sum of all the elements
$\gamma_{\y,\x}\in I_\y\cdot \Blg_0  \cdot I_\x$,
where $\x$ and $\y$ run over all idempotent states as above.

Let $\Blg(m,k)$ be the quotient algebra of $\Blg_0(m,k)$ by the relations
\[ L_{i+1}\cdot L_i = 0,\qquad R_i \cdot R_{i+1}=0 \]
and
\[ I_{\x}\cdot U_j = 0 \] if $\x^{-1}(\{j-1,j\})=\emptyset$; i.e. if
$\Ideal$ denotes the two-sided ideal generated by $L_{i+1}\cdot L_i$,
$R_i\cdot R_{i+1}$, and $\Idemp{\x}\cdot U_j$ as above, then $\Blg=\Blg_0/\Ideal$.

We define a differential graded algebra
$\Alg(n,\Matching)$ by adjoining $n$ central elements $C_{i,j}$ to
$\Blg(2n,n)$, one for each $\{i,j\}\in\Matching$, which satisfy the
relation $C_{i,j}^2=0$. We introduce a differential $d$ on
$\Alg(n,\Matching)$ by $d C_{i,j}=U_i U_j$. 

The algebras can be given gradings, after choosing an orientation on $K$; see~\cite{HFKa2}.

\begin{example}The algebra $\Blg(2,1)$ has the following geometric
  description.  Consider the graph with three vertices, labelled
  $0$, $1$, and $2$, and two edges, one connecting $0$ to $1$ and another
  connecting $1$ and $2$. Think of the path as drawn
  horizontally, so that $1$ is to the right of $0$.  The algebra
  $\Blg(2,1)$ can be thought of as the quotient of the algebra of all
  paths in this graph, and obtained by dividing out by all paths that
  connect $0$ and $2$.  The constant paths at $0$, $1$, and $2$
  correspond to the three idempotents $\Idemp{\{0\}}$,
  $\Idemp{\{1\}}$, and $\Idemp{\{2\}}$; the first edge corresponds to
  $L_1$ and $R_1$ (depending on the direction it is crossed), and the
  second corresponds to $L_2$ and $R_2$.  Here, $U_1=L_1 \cdot R_1 +
  R_1\cdot L_1$ and $U_2=L_2\cdot R_2 + R_2\cdot L_2$. Clearly, the
  relation $U_1 U_2=0$ holds in $\Blg(2,1)$.

To construct $\Alg(1,\{1,2\})$, we adjoin one variable $C_{\{1,2\}}$ whose square is zero.
We think of this as differential graded algebra, but the differential is identically zero.
In particular, 
\[ \Idemp{\{1\}}\cdot  \Alg(1,\{1,2\})
\cdot\Idemp{\{1\}}
\cong \frac{\Field[U_1,U_2,C_{\{1,2\}}]}{(U_1 U_2, C_{\{1,2\}}^2)}.\]
\end{example}

We will view $\Alg=\Alg(n,\Matching)$ as an algebra over the base ring of idempotents 
\[\IdempRing(\Alg)\cong \Field^{{2n+1}\choose{n}}.\]
As such, an $\Ainfty$ module over $\Alg$ will be a right module over
$\IdempRing(\Alg)$, and the actions $m_n$ will be multilinear over
$\IdempRing(\Alg)$: i.e. the tensor products in
Equation~\eqref{eq:DefAct} are taken over that ring. Similarly, a type
$D$ structure is a left module over $\IdempRing(\Alg)$, the map
$\delta^1$ is a $\IdempRing(\Alg)$-module map, and the tensor product
appearing in Equation~\eqref{eq:DefTypeD} is also over that
ring. Moreover, in the definition of $M\DT X$, the underlying vector
space is $M\otimes_{\IdempRing(\Alg)} X$.

\subsection{Bimodules}
Having defined the algebra, we must associate bimodules to the various pieces.

First, consider the module associated to the global maximum. Here,
there is a single upper Kauffman state, and the corresponding
generator $\z$ has
\begin{align*}
        \z &= \Idemp{\{1\}}\cdot \z \\
         \delta^1 \z  &= C_{1,2}\otimes \z.
\end{align*} 

We can think of the tensor products in the construction of
$\KC(\Diag)$ as an iterative procedure, starting with this type $D$
structure as a first step, and then successively increasing the size
of the diagram covered by tensoring the type $D$ structure in hand
with the DA structure associated to the partial knot diagram
immediately below it.  As we will indicate below, the generators of
this type $D$ structure correspond to upper Kauffman states
$(\state,\x)$ for the diagram, and whose left idempotent is
$\Idemp{\x}$.  

Thus, a key ingredient going into this definition is the type $DA$ bimodule associated to each standard
partial diagram.

We do not describe the bimodules explicitly here; we refer the
interested reader to~\cite{HFKa2}. Instead, we explain how to specify them uniquely up to homotopy equivalence.

To this end, it is useful to make the following observations. 
Consider the dual complex for the bar complex, i.e. 
\[ \Cobar(\Alg) = \bigoplus_{i=1}^{\infty} \Hom(\Alg^{\otimes i},\Field), \]
equipped with a differential which is hom dual to $\dbar$ as given in Equation~\eqref{eq:dbar}.
This is an algebra, with multiplication induced by the natural map 
\[ \Hom(\Alg^{\otimes i},\Field)\otimes \Hom(\Alg^{\otimes j},\Field)\to \Hom(\Alg^{\otimes (i+j)},\Field).\]
A bounded $\Ainfty$ module over a differential graded algebra $\Alg$
is the same thing as a type $D$ structure over $\Cobar(\Alg)$.
More generally, a (bounded) $DA$ bimodule, $\lsup{\Alg_1}X_{\Alg_2}$
is the same thing as a type $D$ structure over the tensor product algebra
$\Alg_1\otimes\Cobar(\Alg_2)$. 

The algebra ${\mathrm{Cobar}}$ tends to be rather large; so instead,
it is often convenient to find a smaller quasi-isomorphic version
$\Alg'$. Then, up to quasi-isomorphism, a bounded $\Ainfty$ module over
$\Alg$ is equivalent to a type $D$ structure over $\Alg'$. (This
equivalence is called {\em Koszul duality}~\cite{HomPair}; see also~\cite{KDual}.)
Similarly, a suitably bounded type $DA$ bimodule $\lsup{\Alg_1}X_{\Alg_2}$ is uniquely determined
(up to quasi-isomorphism) by a corresponding module $\lsup{\Alg_1\otimes\Alg_2'}Y$. Correspondingly, 
type $D$ structures over $\Alg_1\otimes \Alg_2'$ are called {\em type $DD$ bimodules} over $\Alg_1$ and $\Alg_2'$.

There is a handy Koszul dual algebra to $\Alg(n,\Matching)$, denoted
$\Alg'(n,\Matching)$, defined as follows.  This is defined
over the base algebra $\Blg(2n,n+1)$, only now we adjoin $2n$
variables $E_1,\dots,E_{2n}$ which satisfy the following relations:
$E_i \cdot E_j = E_j \cdot E_i$ if $i$ and $j$ are not matched,
and $d E_i = U_i$. 

The identity map from $\Alg(n,\Matching)$ to itself can be thought of
as a type $DA$ bimodule over $\Alg(n,\Matching)$; which we can think of as the 
bimodule associated to the trivial diagram from Example~\eqref{ex:TrivialDiag}.
This is Koszul dual
to the type $D$ structure $\CanonDD$ over
$\Alg(n,\Matching)\otimes\DuAlg(n,\Matching)$ whose generators are
$\Idemp{\x}\otimes\Idemp{\y}$, where $\x$ and $\y$ are complementary
idempotent states; i.e. $\x\cup\y=\{0,\dots,2n\}$.
The differential is specified by the element
\[
A = \sum_{i=1}^{2n} \left(L_i\otimes R_i + R_i\otimes L_i\right) + \sum_{i=1}^{2n}
U_i\otimes E_i + 
\sum_{\{i,j\}\in\Partition} C_{\{i,j\}}\otimes \llbracket E_i,E_j\rrbracket\in
  \Alg\otimes\DuAlg,\]
where $\llbracket E_i,E_j\rrbracket = E_i\cdot E_j + E_j\cdot E_i$.
Specifically,
\[ \delta^1 \colon \CanonDD \to (\Alg\otimes\DuAlg) \otimes_{\IdempRing(\Alg)\otimes\IdempRing(\DuAlg)} \CanonDD\]
is given by
$\delta^1(v)=A\otimes v$

\subsubsection{Crossings.}
We characterize  the type $DA$ bimodule of a positive crossing  $\lsup{\Alg_2}\Pos_{\Alg_1}$,
where $\Alg_2=\Alg(n,\Matching_2)$, $\Alg_1=\Alg(n,\Matching_1)$,
and $\Matching_1$ is obtained from $\Matching_2$ by composing with
the transposition switching $i$ and $i+1$.  Its corresponding type
$DD$ bimodule $\lsup{\Alg_2,\DuAlg_1}\Pos$ is generated by partial
Kauffman states with the understanding that left multiplication by
$\Idemp{\x}\otimes \Idemp{\{0,\dots,2n\}\setminus \y}$ preserves the
generator corresponding to $(\state,\x,\y)$.

Decomposing the partial Kauffman states by type as explained in Example~\ref{ex:Crossing},
the differential has the following types of terms:
\begin{enumerate}[label=(P-\arabic*),ref=(P-\arabic*)]
\item 
  \label{type:OutsideLRP}
  $R_j\otimes L_j$
  and $L_j\otimes R_j$ for all $j\in \{1,\dots,2n\}\setminus \{i,i+1\}$; these connect
  generators of the same type. 
\item
  \label{type:UCP}
  $U_{j}\otimes E_{\tau(j)}$
  for all $j=1,\dots,2n$
\item
  \label{type:UCCP}
  $C_{\{\alpha,\beta\}}\otimes [E_{\tau(\alpha)},E_{\tau(\beta)}]$,
  for all  $\{\alpha,\beta\}\in\Matching_2$; these connect generators of the same type
\item 
  \label{type:InsideP}
  Terms in the diagram below that connect  generators
  of different types:
  \begin{equation}
    \label{eq:PositiveCrossing}
    \begin{tikzpicture}[scale=1.8]
    \node at (0,3) (N) {$\North$} ;
    \node at (-2,2) (W) {$\West$} ;
    \node at (2,2) (E) {$\East$} ;
    \node at (0,1) (S) {$\South$} ;
    \draw[->] (S) [bend left=7] to node[below,sloped] {\tiny{$R_i\otimes U_{i+1}+L_{i+1}\otimes R_{i+1}R_i$}}  (W)  ;
    \draw[->] (W) [bend left=7] to node[above,sloped] {\tiny{$L_{i}\otimes 1$}}  (S)  ;
    \draw[->] (E)[bend right=7] to node[above,sloped] {\tiny{$R_{i+1}\otimes 1$}}  (S)  ;
    \draw[->] (S)[bend right=7] to node[below,sloped] {\tiny{$L_{i+1}\otimes U_i + R_i \otimes L_{i} L_{i+1}$}} (E) ;
    \draw[->] (W)[bend right=7] to node[below,sloped] {\tiny{$1\otimes L_i$}} (N) ;
    \draw[->] (N)[bend right=7] to node[above,sloped] {\tiny{$U_{i+1}\otimes R_i + R_{i+1} R_i \otimes L_{i+1}$}} (W) ;
    \draw[->] (E)[bend left=7] to node[below,sloped]{\tiny{$1\otimes R_{i+1}$}} (N) ;
    \draw[->] (N)[bend left=7] to node[above,sloped]{\tiny{$U_{i}\otimes L_{i+1} + L_{i} L_{i+1}\otimes R_i$}} (E) ;
  \end{tikzpicture}
\end{equation}
        For example, these give rise to terms $(1\otimes R_i)\otimes \West+(1\otimes L_{i+1})\otimes \East$ in $\delta^1(\North)$.
\end{enumerate}
The negative crossing works similarly, except that Equation~\eqref{eq:PositiveCrossing} is replaced by:
\begin{equation}
  \label{eq:NegativeCrossing}
\begin{tikzpicture}[scale=1.8]
    \node at (0,3) (N) {$\North$} ;
    \node at (-2,2) (W) {$\West$} ;
    \node at (2,2) (E) {$\East$} ;
    \node at (0,1) (S) {$\South$} ;
    \draw[->] (W) [bend left=7] to node[above,sloped] {\tiny{${U_{i+1}}\otimes{L_i}+{L_{i} L_{i+1}}\otimes{R_{i+1}}$}}  (N)  ;
    \draw[->] (N) [bend left=7] to node[below,sloped] {\tiny{${1}\otimes{R_{i}}$}}  (W)  ;
    \draw[->] (N)[bend right=7] to node[below,sloped] {\tiny{${1}\otimes{L_{i+1}}$}}  (E)  ;
    \draw[->] (E)[bend right=7] to node[above,sloped] {\tiny{${U_i}\otimes{R_{i+1}} + {R_{i+1} R_{i}}\otimes{L_i}$}} (N) ;
    \draw[->] (S)[bend right=7] to node[above,sloped] {\tiny{${R_i}\otimes{1}$}} (W) ;
    \draw[->] (W)[bend right=7] to node[below,sloped] {\tiny{${L_i}\otimes{U_{i+1}} + {R_{i+1}}\otimes{L_{i} L_{i+1}}$}} (S) ;
    \draw[->] (S)[bend left=7] to node[above,sloped]{\tiny{${L_{i+1}}\otimes{1}$}} (E) ;
    \draw[->] (E)[bend left=7] to node[below,sloped]{\tiny{${R_{i+1}}\otimes{U_{i}} + {L_i}\otimes{R_{i+1} R_{i}}$}} (S) ;
  \end{tikzpicture}
\end{equation}

\subsubsection{Local maximum}
Consider the partial knot diagram of a local maximum from Example~\ref{ex:LocalMax}.
The type $DA$ bimodule of this partial knot diagram
$\lsup{\Alg_2}\Max_{\Alg_1}$ is defined over algebras $\Alg_1$ and $\Alg_2$, and it is specified as follows.
Let $\phi_c\colon \{1,\dots,2n\}\to \{1,\dots,2n+2\}$ be the map
\begin{equation}
\label{eq:DefInsert}
\phi_c(j)=\left\{\begin{array}{ll}
j &{\text{if $j< c$}} \\
j+2 &{\text{if $j\geq c$.}}
\end{array}\right.
\end{equation}
Then,
\begin{equation}
  \label{eq:MaxAlgebras}
  \Alg_1=\Alg(n,\Matching_1)
\qquad{\text{and}}\qquad
\Alg_2=\Alg(n+1,\phi_c(\Matching_1)\cup\{c,c+1\})
\end{equation}

We specify this bimodule up to quasi-isomorphism by defining its dual
type $DD$ bimodule $\lsup{\Alg_2,\Alg_1'}\Max$.  The generators
correspond to partial Kauffman states for the partial knot diagram,
again with the convention that
$\Idemp{\x}\otimes \Idemp{\{0,\dots,2n\}\setminus \y}$ preserves the
generator corresponding to $(\state,\x,\y)$.

The differential is specified by the algebra element
$A\in \Alg_2\otimes\DuAlg_1$ 
\begin{align*}
A&=(L_{c} L_{c+1}\otimes 1) + 
(R_{c+1} R_{c}\otimes 1) 
 + \sum_{i=1}^{2n} L_{\phi(i)}\otimes R_i + R_{\phi(i)}\otimes L_i \\
& +  C_{\{c,c+1\}}\otimes 1
+ \sum_{i=1}^{2n} U_{\phi(i)}\otimes E_i 
+ \sum_{\{i,j\}\in\Matching} C_{\{\phi(i),\phi(j)\}} \otimes \llbracket E_i,E_j\rrbracket
  \end{align*}
where we have dropped the subscript $c$ from $\phi_c=\phi$.

In more detail, decomposing partial Kauffman states according to the type $\XX$, $\YY$, and $\ZZ$
specified in Example~\ref{ex:LocalMax}, the differential on the bimodule has terms  are of the following types:
\begin{enumerate}[label=($\Omega$-\arabic*),ref=($\Omega$-\arabic*)]
\item 
  \label{type:MOutsideLR}
  $R_{\phi(j)}\otimes L_j$
  and $L_{\phi(j)}\otimes R_{j}$ for all 
  $j\in \{1,\dots,2n\}\setminus \{c-1,c\}$; these connect
  generators of the same type. 
\item
  \label{type:MUC}
  $U_{\phi(i)}\otimes E_i$ for $i=1,\dots,2n$
\item 
  \label{type:MUC2}
  $C_{\{\phi(i),\phi(j)\}}\otimes \llbracket E_i,E_j\rrbracket$
  for all $\{i,j\}\in\Partition$;
\item
  $C_{\{c,c+1\}}\otimes 1$
\item 
  \label{type:MInsideCup}
  Terms in the diagram below connect  generators
  of different types.
\begin{equation}
  \label{eq:CritDiag}
  \begin{tikzpicture}[scale=1.5]
    \node at (-1.5,0) (X) {$\XX$} ;
    \node at (1.5,0) (Y) {$\YY$} ;
    \node at (0,-2) (Z) {$\ZZ$} ;
    \draw[->] (X) [bend right=7] to node[below,sloped] {\tiny{$R_{c+1} R_{c}\otimes 1$}}  (Y)  ;
    \draw[->] (Y) [bend right=7] to node[above,sloped] {\tiny{$L_{c} L_{c+1}\otimes 1$}}  (X)  ;
    \draw[->] (X) [bend right=7] to node[below,sloped] {\tiny{$L_{c-1}\otimes
R_{c-1}$}}  (Z)  ;
    \draw[->] (Z) [bend right=7] to node[above,sloped] {\tiny{$R_{c-1}\otimes L_{c-1}$}}  (X)  ;
    \draw[->] (Z) [bend right=7] to node[below,sloped] {\tiny{$L_{c+2}\otimes R_{c}$}}  (Y)  ;
    \draw[->] (Y) [bend right=7] to node[above,sloped] {\tiny{$R_{c+2}\otimes L_{c}$}}  (Z)  ;
  \end{tikzpicture}
\end{equation}
\end{enumerate}
The above description can be readily specialized to the case where the
maximum appears in an unbounded region. In these cases, there is only
one generator type, $\ZZ$.

\subsubsection{Local minimum}
Turning the above example on its top, we have $\lsup{\Alg_1}\Min^c_{\Alg_2}$, where
$\Alg_1$ and $\Alg_2$ are as in Equation~\eqref{eq:MaxAlgebras}. We specify this module
by describing its dual type $DD$ bimodule $\lsup{\Alg_1,\Alg_2'}\Min_c$. Its generators are partial Kauffman states,
with the understanding now that
$\Idemp{\x}\otimes \Idemp{\{0,\dots,2n+2\}\setminus \y}$ preserves the generator corresponding to $(\state,\x,\y)$.
The $DD$ bimodule is specified by the algebra element
$A\in \Alg_1\otimes \Alg_2'$ 
\begin{align}
A&=(1\otimes L_{c} L_{c+1}) + 
(1\otimes R_{c+1} R_{c}) 
 + \sum_{j=1}^{2n} R_{j} \otimes L_{\phi(j)} + L_{j} \otimes R_{\phi(j)} + U_{j}\otimes E_{\phi(j)} \label{eq:defDDmin}\\
&
  + 1\otimes E_{c} U_{c+1}
  + U_{\alpha}\otimes \llbracket E_{\phi(\alpha)},E_{c}\rrbracket E_{c+1} 
   + C_{\{\alpha,\beta\}}\otimes \llbracket E_{\phi(\alpha)},E_{c}\rrbracket \llbracket E_{c+1},E_{\phi(\beta)}\rrbracket. \nonumber
  \end{align}

\subsubsection{Global minimum}
When we have covered the entire diagram, save for the
last piece (the global minimum), we have a type $D$ structure $C$ over the algebra
\[ \Idemp{\{1\}}\cdot \Alg(2,1,\{1,1\})\cdot \Idemp{\{1\}}.\] After
dividing out by $C_{\{1,2\}}$, what remains can be thought of as a
chain complex over $\Field[U_1,U_2]/U_1 U_2$. Its homology  is the invariant $\KH(K)$.
Dividing out the complex by $U_2$ and taking homology gives $\KHm(K)$; and dividing out by
both $U_1$ and $U_2$ and taking homology gives $\KHa(K)$.

\subsection{Topological invariance}

It is proved in~\cite{HFKa2} that the bigraded homology modules
$\KHa(\Diag)$, $\KHm(\Diag)$ and $\KH(\Diag)$ are invariants of the
underlying oriented knot $K$ represented by the diagram $\Diag$. This
involves checking that the homology of the chain complex is invariant
under Reidemeister moves. These relations are proved locally on the level of bimodules.

For example, the bimodules of a crossing satisfy the ``braid relations''
    for any $1\leq i,j\leq 2n-1$:
  for $|i-j|>1$,
  these relations give quasi-isomorphisms of bimodules
  \begin{equation}\Pos^j\DT\Pos^i
    \simeq~
    \Pos^i\DT\Pos^j;
    \label{eq:FarBraids}
    \end{equation}
    (where we have suppressed the algebras which come naturally from the pictures)
    while if $j=i+1$,
    then,    
    \begin{equation}
    \Pos^i\DT\Pos^{i+1}\DT\Pos^i
      \simeq~\Pos^{i+1}\DT\Pos^{i}
      \DT\Pos^{i+1}.
    \label{eq:NearBraids}
    \end{equation}
    (again for suitably chosen algebras).
Thus, we can think of these bimodules as giving a braid group action on the derived
category of modules over $\Alg(n,\Matching)$; compare~\cite{KhovanovSeidel, Bimodules, Manion}.

The knot invariants $\KHa(K)$, $\KHm(K)$, and $\KHwz(K)$ are designed
to agree with their knot Floer homological analogues. One can
nonetheless, study them independently of holomorphic methods.  For
example, one can verify certain fundamental properties
within the algebraic realm: 
relating their graded Euler characteristics with the 
Alexander polynomial of $K$, establishing  a K{\"u}nneth formula for connected
sums, and verifying an algebraic structure result for $\KHm(K)$ analogous to
Proposition~\ref{prop:Structure}; see~\cite{HFKa2}.

\section{Bordered knot algebras and pseudo-holomorphic curves}

In fact,  we prove that this bordered invariant is
equivalent to knot Floer homology~\cite{HolKnot}.
To establish the link between the algebraic constructions and knot
Floer homology, it is useful to give a pseudo-holomorphic
interpretation of these structures.

Upper knot diagrams can be represented by suitably decorated (partial) Heegaard diagrams.
An {\em upper Heegaard diagram} is a surface $\Sigma$ of genus $g$ and
$2n$ boundary components, labelled $Z_1,\dots,Z_{2n}$, together with the following additional data:
\begin{itemize}
\item  A collection
of disjoint, embedded arcs $\{\alpha_i\}_{i=1}^{2n-1}$, so that
$\alpha_i$ connects $Z_i$ to $Z_{i+1}$.
\item A collection of 
disjoint embedded closed curves $\{\alpha^c_i\}_{i=1}^{g}$
(which are also disjoint from $\alpha_1,\dots,\alpha_{2n-1}$).
\item A collection of embedded, mutually disjoint closed curves
$\{\beta_i\}_{i=1}^{g+n-1}$.
\end{itemize}  Both sets of $\alpha$-and the
$\beta$-circles are required to consist of homologically linearly
independent curves, and the $\beta$-circles are further
required to have the following combinatorial property: the surface
obtained by cutting $\Sigma$ along $\beta_1,\dots,\beta_{g+n-1}$,
which has $n$ connected components, is required to contain exactly
two boundary circles in each component. This requirement gives a
matching $\Matching$ on $\{1,\dots,2n\}$ (a partition into two-element subsets), where $\{i,j\}\in \Matching$ if $Z_i$ and $Z_j$
can be connected by a path that does not cross any $\beta_k$.

We sometimes abbreviate the data
\[\Hup=(\Sigma,Z_1,\dots,Z_{2n},\{\alpha_1,\dots,\alpha_{2n-1}\},\{\alpha^c_1,\dots,\alpha^c_{g}\},                                                                               
\{\beta_1,\dots,\beta_{g+n-1}\}),\]
and let $\Matching(\Hup)$ be the induced matching.

\begin{figure}[h] 
                  \centering \input{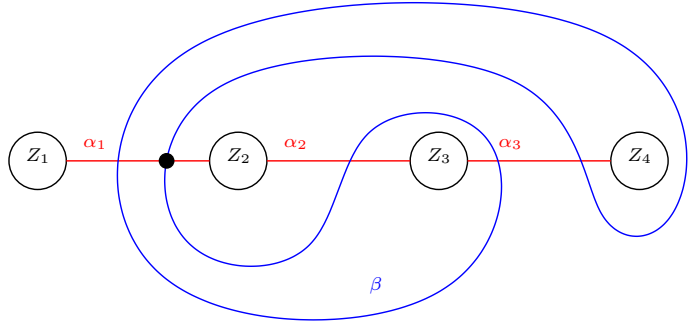} \caption{{\bf
  Upper Heegaard diagram.} The diagram here is the upper Heegaard
  diagram for the upper planar diagram from
  Figure~\ref{fig:UpperDiagTref}; the black dot represents a Heegaard
  state corresponding to the Kauffman state from
  Figure~\ref{fig:UpperDiagTref}.} \label{fig:UpperHeeg} \end{figure}

An {\em upper Heegaard state} is an $g+n-1$-tuple of points $\x$,
each of which is in $\alpha_i\cap \beta_j$ or $\alpha^c_i\cap \beta_j$ for various $i$ and $j$, 
so that each circle $\alpha_i$ contains an element in $\x$, each $\beta_j$ contains 
an element in $\x$, and no more than one element in $\x$ is contained on each 
$\alpha$-arc $\alpha_i^c$.  Each Heegaard state $\x$
determines a subset $s(\x)$ of $\{1,\dots,2n\}$ with cardinality $n$, or, equivalently,
an idempotent $\Idemp{s(\x)}$ in $\Alg(n,\Matching)$: 
\[ s(\x)=\{1,\dots,2n-1\}\setminus\{1\leq i\leq 2n-1 \big| \x\cap \alpha_i~\text{is non-empty}\}.\]

 \begin{figure}[h] \centering \input{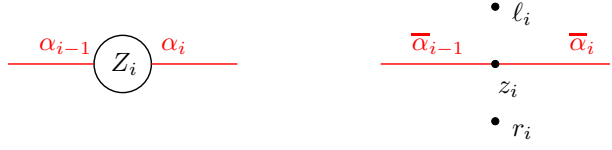}
 \caption{{\bf Boundary markings.}  On the left, we have shown a neighborhood of a boundary component
$Z_i$ of $\Sigma$. To the right, we have filled in $Z_i$, replacing it with the point $z_i$.}
 \label{fig:BoundaryMarkings}
 \end{figure}

Fill in each boundary component  $Z_i$, to obtain a closed Riemann surface ${\overline\Sigma}$,
with $2n$ marked points $z_i$.
Extend $\alpha_i$ into ${\overline\Sigma}$ to obtain a curve ${\overline\alpha}_i$
with $\partial {\overline \alpha}_i=z_i-z_{i-1}$, as shown in Figure~\ref{fig:BoundaryMarkings}.
We will place a pair of points $r_i$ and $\ell_i$ in a neighborhood of $z_i$, separated
by ${\overline\alpha}_{i-1}\cup{\overline\alpha}_i$. (In the special case where $i=1$ and $2n$, the two points $r_i$ and $\ell_i$
are not separated by this neighborhood, as one of $\alpha_{i-1}$ or
$\alpha_i$ does not exist.)  We will be working with holomorphic disks
in $\Sym^{g+n-1}(\Sigma)$, relative to $\Tb$ and
\[ L^0_{\alpha}=\alpha_1^c\times\dots\times\alpha^c_g\times \Sym^{n-1}({\overline\alpha}_1\cup\dots\cup{\overline\alpha}_{2n-1}).\]
Note that $L^0_\alpha$ is a singular space, with singularities
contained in the locus where two points are contained on the same
$\alpha_i$-curve. We will work away from this locus, in the subspace
$L_\alpha\subset L^0_{\alpha}$ consisting of those $n-1$-tuples where
no two points lie on the same $\alpha_i$ (this corresponds to the
``boundary monotonicity'' condition of~\cite{InvPair}), and each point
lies in the interior of some $\alpha_i$, denoted
$\Interior{\alpha_i}$. Clearly, $L_\alpha$ is disconnected; in fact
\[ L_{\alpha}=
\alpha^c_1\times \dots\times \alpha^c_{g}
\times \left(\bigcup_{\{t_1,\dots,t_{n-1}\}\subset \{1,\dots,2n-1\}} \Interior{\alpha_{t_1}}\times\dots\times\Interior{\alpha_{t_{n-1}}}\right).\]
The upper Heegaard states correspond to intersection points of $L_\alpha$ with $\Tb$.
If $\x$ is in the component of $L_\alpha$ specified by $\{t_1,\dots,t_{n-1}\}$, then 
$\Idemp{s(\x)}\cdot \x = \x$ where $s(\x)=\{1,\dots,2n-1\}\setminus\{t_1,\dots,t_{n-1}\}$.

Let $\pi_2(\x,\y)$ denote the space of homotopy classes of Whitney
disk as before, only now half of the boundary of the disk is mapped
into $L^0_\alpha$, and the other half into $\Tb$.  (In fact, the disks
of interest to us will have half their boundary mapped into the
closure of $L_\alpha$, ${\overline L}_{\alpha}\subset L^0_\alpha$.)
Each $\phi\in\pi_2(\x,\y)$ with non-negative local multiplicities
determines an algebra element
$b_0(\phi)\in\Idemp{\x}\cdot \Blg_0\cdot\Idemp{\y}$, given by
\[ b_0(\phi)=U_1^{c_1(\phi)}\cdots U_{2n}^{c_{2n}(\phi)}\cdot
\gamma_{\x\y},\] where
$c_i(\phi)=\min(n_{\ell_i}(\phi),n_{r_i}(\phi))$.  

Let $X$ denote the vector space spanned by upper Heegaard states.
Consider the map
\[ \mfacZ\colon X \to \Blg_0\otimes X \]
(again, where the tensor product is taken over the idempotent ring)
defined by 
\[ \mfacZ(\x)=\sum_{\y\in \States(\HD)} \sum_{\{\phi\in\pi_2(\x,\y)\big|\Mas(\phi)=1\}}
 \#\left(\frac{\ModFlow(\phi)}{\R}\right) \cdot b_0(\phi)\cdot \y.\]

\begin{prop}
  The endomorphism $\mfacZ$ satisfies the structure relation
  \[ (\mu_2^{\Blg_0}\otimes \Id_{X})\circ (\Id_{\Blg_0}\otimes \mfacZ)\circ \mfacZ(\x) -
  \left(\sum_{\{i,j\}\in\Matching} U_i U_j\right)\otimes \x + \Ideal \otimes X,\]
  where $\Ideal$ is the ideal used in the definition of $\Blg(2n,n)$.
\end{prop}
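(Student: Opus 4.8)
The plan is to prove the relation by the standard method of counting the ends of one-dimensional moduli spaces, here the index-two moduli spaces of holomorphic disks in $\Sym^{g+n-1}(\Sigma)$ with half of the boundary on $\Tb$ and half on the closure of $L_\alpha$. Fix upper Heegaard states $\x$ and $\z$. Assuming (as usual) a suitable admissibility hypothesis on $\Hup$ so that only finitely many homotopy classes carry holomorphic representatives, and after a generic perturbation achieving transversality as in~\cite{InvPair}, each $\ModFlow(\psi)/\R$ with $\psi\in\pi_2(\x,\z)$ and $\Mas(\psi)=2$ is a smooth one-manifold whose compactification $\overline{\ModFlow(\psi)/\R}$ has an even number of boundary points. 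The asserted identity (read as ``$\cdots\in\Ideal\otimes X$'') will follow by identifying those boundary points and summing over $\psi$.

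First I would enumerate the ends of $\overline{\ModFlow(\psi)/\R}$. By Gromov compactness and the usual codimension count these are of three kinds: (a) two-story limits $\phi_1*\phi_2$ with $\phi_1\in\pi_2(\x,\y)$, $\phi_2\in\pi_2(\y,\z)$ and $\Mas(\phi_1)=\Mas(\phi_2)=1$; (b) $\beta$-boundary degenerations, where a disk with boundary entirely on $\Tb$ bubbles off; and (c) degenerations on the $\alpha$-side of the boundary, where the disk boundary limits to the singular locus of $L^0_\alpha$ (two of the $n-1$ moving points colliding on one of the arcs $\overline\alpha_i$) or to a component of $\overline L_\alpha$. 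Sphere bubbles are generically excluded, and $\Tb$-bubbles have Maslov index $\geq 2$ by the minimal Maslov number computation for tori in symmetric products; hence in case (b) the complementary main component has non-positive index, so it is constant, forcing $\x=\z$ and identifying $\psi$ with a $\beta$-boundary-degeneration domain.

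Next I would compute the three contributions. The two-story ends (a) are counted exactly by $(\mu_2^{\Blg_0}\otimes\Id_X)\circ(\Id_{\Blg_0}\otimes\mfacZ)\circ\mfacZ(\x)$, provided the weight is multiplicative: $b_0(\phi_1)\cdot b_0(\phi_2)=b_0(\phi_1*\phi_2)$ in $\Blg_0$. Writing $b_0(\phi)=U_1^{c_1(\phi)}\cdots U_{2n}^{c_{2n}(\phi)}\,\gamma_{\x,\y}$ with $c_i(\phi)=\min(n_{\ell_i}(\phi),n_{r_i}(\phi))$ and recalling $\gamma_{\x,\y}\cdot\gamma_{\y,\z}=P_{\x,\y,\z}\,\gamma_{\x,\z}$, multiplicativity amounts to the elementary identity $c_i(\phi_1)+c_i(\phi_2)+n_i=c_i(\phi_1*\phi_2)$, where $n_i$ is the exponent of $U_i$ in $P_{\x,\y,\z}$; this follows from translation-invariance of $\min$ together with the standard identity expressing $n_{\ell_i}(\phi)-n_{r_i}(\phi)$ in terms of $v^{\y}_i-v^{\x}_i$. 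For (b): the primitive $\beta$-boundary-degeneration domains are the $n$ components $P$ of $\Sigma$ cut along all of $\beta_1,\dots,\beta_{g+n-1}$, each containing exactly two boundary circles $Z_i$, $Z_j$ with $\{i,j\}\in\Matching$; such a $P$ has local multiplicity $1$ near each of $Z_i$ and $Z_j$ and $0$ near the remaining boundary circles, so $b_0(P)=U_iU_j$, while every higher multiple of $P$ has index $>2$. By the standard count of boundary degenerations in symmetric products (cf.~\cite{InvPair}), each primitive $P$ has a mod-$2$ unique holomorphic representative subject to one marked-point constraint, so the ends of type (b) contribute precisely $\left(\sum_{\{i,j\}\in\Matching}U_iU_j\right)\otimes\x$.

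Finally, the $\alpha$-side ends (c) -- which is exactly what the definition of $b_0$ through the minimum is engineered to control -- I would show contribute terms whose $\Blg_0$-coefficients lie in $\Ideal$: namely, terms of the shape $L_{i+1}L_i$, $R_iR_{i+1}$, or $\Idemp{\x'}\cdot U_j$ with $(\x')^{-1}(\{j-1,j\})=\emptyset$. Adding the contributions (a), (b), (c), using that $\overline{\ModFlow(\psi)/\R}$ is a compact one-manifold, and summing over all $\psi$ yields
\[ (\mu_2^{\Blg_0}\otimes\Id_X)\circ(\Id_{\Blg_0}\otimes\mfacZ)\circ\mfacZ(\x)-\left(\sum_{\{i,j\}\in\Matching}U_iU_j\right)\otimes\x\ \in\ \Ideal\otimes X, \]
which is the assertion. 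I expect the main obstacle to be the treatment of the $\alpha$-side ends (c): checking that reaching the singular locus of $L^0_\alpha$ (or a component of $\overline L_\alpha$) is a genuine codimension-one phenomenon, keeping the Maslov indices under control so that no further compactification strata appear, and verifying that the weighted counts of these ends fall in the ideal $\Ideal$. By comparison, step (b) -- the conceptual source of the curvature -- is essentially a reference to the symmetric-product boundary-degeneration count, and step (a) reduces to the combinatorial identity above.
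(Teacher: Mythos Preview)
Your proposal is correct and follows essentially the same approach as the paper's sketch: analyze ends of index-two moduli spaces, with two-story breakings giving $(\mfacZ)^2$ via multiplicativity of $b_0$ (the paper's Observation~1), $\beta$-boundary degenerations supplying the curvature $\sum_{\{i,j\}\in\Matching}U_iU_j$ (Observation~4), and the $\alpha$-side phenomena landing in $\Ideal$ (Observations~2 and~3).

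One organizational difference is worth noting. You frame (c) as a direct analysis of $\alpha$-side \emph{ends}, and flag as the main obstacle verifying that reaching the singular locus of $L^0_\alpha$ is codimension one and that the resulting counts lie in $\Ideal$. The paper sidesteps this entirely: its Observation~2 is not about counting bad ends, but rather the statement that whenever a homotopy class $\phi$ admits \emph{any} broken decomposition through a point $\y'\in (L^0_\alpha\cap\Tb)\setminus(L_\alpha\cap\Tb)$, one already has $b_0(\phi)\in\Ideal$ (read off directly from the domain, as in the $L_{i+1}L_i$ and $\Idemp{\x}U_j$ pictures). Hence every end of such a $\ModFlow(\phi)$---good two-story, bad two-story, or degeneration---contributes in $\Ideal\otimes X$, and one may simply discard those moduli spaces wholesale before running the usual $\partial^2=0$ argument on the remainder. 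This avoids any transversality or gluing analysis at the singular locus, which is what makes the ``main obstacle'' you anticipate largely evaporate.
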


\begin{proof}[Sketch of proof]
In broad terms, the proof of this is the usual $\partial^2=0$ proof in Lagrangian Floer homology:
it is proved by considering one-dimensional moduli spaces of pseudo-holomorphic disks, and identifying
their boundaries.

In more detail, the proof rests on the following observations

{\bf Observation 1.} First, note that the map $b_0$ is additive under juxtapositions, in
the sense that if $\x,\y,\z\in L_\alpha\cap\Tb$,
$\phi_1\in\pi_2(\x,\y)$, and $\phi_2\in\pi_2(\y,\z)$ are two homotopy
classes whose local multiplicities at all the $\ell_i$ and $r_i$ are
non-negative, then 
\begin{equation}
        \label{eq:AdditiveB0}
        b_0(\phi_1*\phi_2)=b_0(\phi_1)\cdot b_0(\phi_2).
\end{equation}
This follows quickly from the fact that for any
$\phi\in\pi_2(\x,\y)$, 
\[
n_{\ell_i}(\phi)-n_{r_i}(\phi)=v^{s(\x)}_i-v^{s(\y)}_i,\]
together with the additivity of local multiplicities under juxtapositions; i.e.
  \[n_{p}(\phi_1*\phi_2)=n_{p}(\phi_1)+n_{p}(\phi_2)\]
  for any $p\in \Sigma$,
  and the definition of multiplication in the algebra.

{\bf Observation 2.}  The next point is that if $\phi=\phi_1*\phi_2$,
where $\phi_1\in\pi_2(\x,\y)$ and $\phi_2\in\pi_2(\y,\z)$ for some
$\y\in L_\alpha\cap\Tb$ has an alternative decomposition
$\phi=\phi_1'*\phi_2'$ with $\phi_1\in\pi_2(\x,\y')$ and
$\phi_2\in\pi_2(\y',\z)$ with $\y\in (L_\alpha^0\cap\Tb)\setminus
(L_\alpha\cap \Tb)$, then $b_0(\phi)\in \Ideal$. To see why, we refer
to Figure~\ref{fig:TypeDrels}.   At
the left, the pair $\{x_1,x_2\}$ represents an upper state $\x$,
$\{x_1,y_2\}$ represents part of an upper state $\y$, and
$\{y_1,y_2\}$ represents part of an upper state $\z$. The small bigon
near $Z_{i+1}$ gives a term of $L_{i+1}\otimes \y$ in $\mfacZ(\x)$;
and the small bigon near $Z_i$ gives a term of $L_i\otimes \z$ in
$\mfacZ(\y)$. Since $L_{i+1} L_{i}=0$, we do not need to consider the
ends of the moduli space from $\x$ to $\z$: the corresponding term is
in the ideal $\Ideal$. Note that the alternative factorization of this moduli
spaces involves $\{y_1,x_2\}$ which is not in a
$L_\alpha\cap\betas$. At the right is a similar picture, now with $\x$
containing $\{x,t\}\subset \x$, $\{y,t\}\subset \y$, and
$\{z,t\}\subset \z$.  A small bigon from $x$ to $y$ gives a term of
$\y$ in $\mfacZ(\x)$. The bigon from $y$ to $z$ containing $Z_i$ gives
a term of $U_i\otimes \z$ in $\mfacZ(\y)$, which is in the ideal $\Ideal$.

\begin{figure} \centering \input{TypeDrels.pstex_t}
  \caption{{\bf Relations in $\Blg$.}}
\label{fig:TypeDrels}
\end{figure}

In view of the above two observations, it suffices to consider ends of
moduli spaces $\phi\in\pi_2(\x,\z)$ for which all broken flowline
decompositions $\phi=\phi_1*\phi_2$ with $\phi_1\in\pi_2(\x,\y)$ and
$\phi_2\in\pi_2(\x,\y)$ have $\y\in L_\alpha\cap\Tb$.  The usual
$\partial^2=0$ proof now shows that the number of such ends has the
same parity as the number of boundary degenerations: holomorphic
curves which have boundary contained entirely on $L^0_\alpha$ or $\Tb$.
We complete the proof with two more observations:
  
{\bf Observation 3.}
A homotopy class corresponding to curves with boundary contained entirely in ${\overline L}_\alpha$
has positive coefficients at every $\ell_i$ and $m_i$; thus, the associated algebra element lies in the
ideal $\Ideal$.

And finally, to keep track of the $\beta$-boundary degenerations, we have:

{\bf Observation 4.}
There are $n$ homotopy classes of disks $\psi$ with boundary in $\Tb$,
corresponding to the matchings $\{i,j\}\in\Matching$, and their
corresponding algebra element is $U_i\cdot U_j$; see Figure~\ref{fig:BoundaryDegeneration}.
\begin{figure} \centering \input{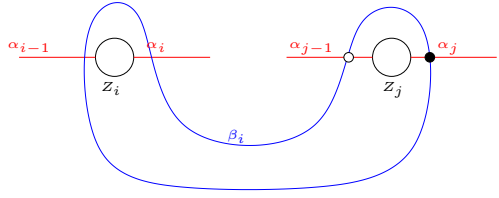}
  \caption{{\bf Motivation for introduction $C_{i,j}$.}
  }
\label{fig:BoundaryDegeneration}
\end{figure}
\end{proof}

The map $\mfacZ$ induces a map
\[ \mfac\colon X \to \Blg\otimes X\]
satisfying the structure relation
\[ (\mu_2\otimes\Id_X)\circ(\Id_{\Blg}\otimes\mfac)\circ\mfac =
\sum_{\{i,j\}} U_i U_j,\] where here $\mu_2$ is multiplication in
$\Blg$.  Note that since $\mu_1=0$ in $\Blg$, the structure relation
for $\mfac$ is nearly the type $D$ structure relation: it would be if
the right hand side were zero. Instead, this structure can be thought
of as a ``curved'' type $D$ structure (i.e. for an algebra with a
$\mu_0$ operation); compare~\cite{KhovanovRozansky}.

We can turn such an object into a type $D$ structure over $\Alg$, defining:
\[ \delta^1(\x)= \left(\sum_{\{i,j\}\in\Matching} C_{\{i,j\}}\right)\otimes \x + \mfac(\x).\]

This defines the type $D$ structure of an upper diagram. 

We consider the upper diagram from Figure~\ref{fig:UpperHeeg}. Note
that we are now working in the (first symmetric product of) the
two-sphere. In some sense, the type $D$ structure is capturing the
Lagrangian Floer homology of an interval with four marked points on it
and the closed curve $\beta$. The type $D$ structure has five
generators corresponding to the five intersection points, which we
label from left to right in the picture $x_1,x_2,t,y_1,y_2$.
\begin{equation}
  \label{eq:Dmod}
  \begin{tikzpicture}[scale=1.8]
  \node at (0,0) (x1) {$x_1$}   ;
  \node at (1,0) (x2) {$x_2$} ;
  \node at (2,0) (t) {$t$} ;
  \node at (3,0) (y1) {$y_1$} ;
  \node at (4,0) (y2){$y_2$} ;
    \draw[->] (x1) [bend left=15] to node[above,sloped] {\tiny{$U_4$}}  (x2)  ;
    \draw[->] (x2) [bend left=15] to node[below,sloped] {\tiny{$U_3$}}  (x1)  ;
    \draw[->] (x2) [bend left=15] to node[above,sloped] {\tiny{$L_2 U_1$}}  (t)  ;
    \draw[->] (t) [bend left=15] to node[below,sloped] {\tiny{$R_2$}}  (x2)  ;
    \draw[->] (t) [bend left=15] to node[above,sloped] {\tiny{$L_3$}}  (y1)  ;
    \draw[->] (y1) [bend left=15] to node[below,sloped] {\tiny{$R_3 U_4$}}  (t)  ;
    \draw[->] (y1) [bend left=15] to node[above,sloped] {\tiny{$U_2$}}  (y2)  ;
    \draw[->] (y2) [bend left=15] to node[below,sloped] {\tiny{$U_1$}}  (y1)  ;
    \draw[->] (x2) [bend left=40] to node[above,sloped] {\tiny{$L_2 L_3$}} (y2) ;
    \draw[->] (y1) [bend left=40] to node[below,sloped] {\tiny{$R_3 R_2$}} (x1) ;
  \end{tikzpicture}
\end{equation}

With a little more work, one can define the $\Ainfty$ module
associated to a lower diagram.  In this case, the higher actions count
pseudo-holomorphic disks that go out to the $\alpha$-boundary.  The
algebra actions record the sequence of walls in which a
pseudo-holomorphic disk crosses the walls in $L_\alpha$. Like in
bordered Floer homology, it is clearer to express these actions
in the language of Lipshitz's cylindrical reformulation of
Heegaard Floer homology~\cite{LipshitzCyl}. A pairing theorem for
recapturing knot Floer homology is then proved using a ``time dilation'' argument analogous to the bordered case
(see~\cite[Chapter~9]{InvPair}), with a little extra attention paid now to
$\beta$-boundary degenerations.

Working out the the type $DD$ bimodules for basic pieces is a fairly
straightforward matter. Extending the pairing theorem to type $DA$ bimodules then gives the following:
\begin{thm}~\cite{HolKnot}
  If $K\subset S^3$ is a knot, then there are isomorphisms of bigraded modules
  \[ \HFKa(K)\cong \KHa(K)\qquad \HFKm(K)\cong \KHm(K)\qquad \HFKwz(K)\cong\KH(K).\]
\end{thm}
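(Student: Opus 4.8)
The plan is to pass through a pseudo-holomorphic model for each combinatorial building block, and then reassemble these models by a gluing (pairing) theorem, finally matching the result with the holomorphic knot Floer complex of a Heegaard diagram for $K$.

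First I would construct holomorphic models for the standard partial knot diagrams, extending the upper Heegaard diagram picture described above. To a local maximum, a local minimum, and a crossing one associates a \emph{partial} Heegaard diagram with decorated boundary (as in Figure~\ref{fig:BoundaryMarkings}), and counts of pseudo-holomorphic disks in the relevant symmetric products --- with half of the boundary on $\Tb$ and half on the partially singular Lagrangian $L^0_\alpha$ --- define type $DA$ bimodules (equivalently, type $DD$ bimodules over the Koszul dual algebra $\DuAlg(n,\Matching)$) over the algebras $\Alg(n,\Matching)$. Exactly as in the Proposition sketched above for the upper diagram, the $\beta$-boundary degenerations contribute the curvature term $\sum_{\{i,j\}\in\Matching}U_iU_j$; adjoining the central variables $C_{\{i,j\}}$ with $dC_{\{i,j\}}=U_iU_j$ and modifying $\delta^1$ by $\sum_{\{i,j\}\in\Matching}C_{\{i,j\}}$ converts each curved structure into an honest type $DA$ structure over $\Alg(n,\Matching)$.

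Second I would identify these holomorphic bimodules with the combinatorial ones defined earlier in the section on bordered algebras and knot invariants. The generators match: upper (resp.\ partial) Heegaard states correspond to upper (resp.\ partial) Kauffman states, compatibly with the idempotents $\Idemp{s(\x)}$. The differentials are then pinned down up to homotopy equivalence by a finite, purely local computation: the partial diagrams for the standard pieces have genus $0$ and few intersection points, so the relevant moduli spaces are combinatorial (bigons and their juxtapositions), and one checks directly that the holomorphic counts reproduce the terms listed in Equations~\eqref{eq:PositiveCrossing}, \eqref{eq:NegativeCrossing}, \eqref{eq:CritDiag} and~\eqref{eq:defDDmin}, together with the various ``outside'' $L_j\otimes R_j$, $U_j\otimes E_j$, and $C\otimes\llbracket E_i,E_j\rrbracket$ terms. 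Boundedness and the grading conventions of~\cite{HFKa2} make the identification one of bigraded bimodules.

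Third --- and this is the main point --- I would prove a pairing theorem: for a decorated knot projection sliced into standard pieces, the iterated $\DT$-product of the holomorphic type $DA$ bimodules over consecutive slices is quasi-isomorphic to the holomorphic complex $\CFKwz$ of the closed doubly-pointed Heegaard diagram obtained by gluing the partial Heegaard diagrams along their common slices. This is a neck-stretching (``time dilation'') argument in the spirit of~\cite[Chapter~9]{InvPair}: one stretches the neck along the circles separating consecutive slices, so that holomorphic curves in the glued diagram degenerate into matched tuples of curves on the two sides, and conversely matched configurations glue back. Relative to the bordered setting, the extra bookkeeping is the contribution of the $\beta$-boundary degenerations and the role of the variables $C_{\{i,j\}}$; but these are exactly what was absorbed in the curved-to-type-$D$ passage above, so the genuinely new analytic input is the transversality, compactness, and gluing theory for disks with boundary on $L^0_\alpha$ near its singular locus. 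I expect this analytic step to be the real obstacle. Finally, the glued Heegaard diagram represents $K\subset S^3$ --- up to isotopy and stabilization it is the standard diagram for the chosen knot projection --- so by invariance of knot Floer homology under Heegaard moves its holomorphic $\CFKwz$ computes $\HFKwz(K)$, and its $V=0$ and $U=V=0$ specializations compute $\HFKm(K)$ and $\HFKa(K)$. Chaining the three identifications --- combinatorial bimodules $\simeq$ holomorphic bimodules, $\DT$-product of holomorphic bimodules $\simeq$ holomorphic $\CFKwz$ of the glued diagram, and holomorphic $\CFKwz\simeq\HFKwz(K)$ --- together with the fact that $\KH(K)$, $\KHm(K)$, $\KHa(K)$ are by construction the homologies of the $\DT$-product and its specializations, yields the stated isomorphisms, with bigradings matching because the algebra gradings were designed to do so.
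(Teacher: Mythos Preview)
Your proposal is correct and follows essentially the same route the paper sketches: build holomorphic $DA$ (equivalently $DD$) bimodules for the standard pieces and identify them with the combinatorial ones, then prove a pairing theorem via the time-dilation argument of~\cite[Chapter~9]{InvPair} with extra care for $\beta$-boundary degenerations, so that the iterated $\DT$-product computes the knot Floer complex of the glued doubly-pointed Heegaard diagram. The paper phrases the pairing step first for an upper/lower decomposition (type $D$ for the upper diagram, $\Ainfty$ module for the lower) and then extends it to $DA$ bimodules, but this is the same argument you outline.
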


\section{Further remarks}

To effectively compute the chain complexes $\KC(K)$, one can start
with the type $D$ structure corresponding to the global maximum, and
successively enlarge it, moving down the knot projection. The computation is
significantly improved by eliminating
(by passing to a homotopy equivalent complex) generators $\x$ with
\[ \delta^1(\x) = \sum_{\y} a_{\x,\y}\otimes \y \] for which some
$a_{\x,\y}$ is an idempotent. Another simplification is achieved by
working directly with the operators $\mfac$; see~\cite{HFKa2}.

Throughout the above discussion, we worked in characteristic $2$ to
avoid signs.  In fact, bordered knot Floer homology with $\Z$
coefficients can be worked out, after paying a little extra care to
sign conventions. This is done in~\cite{HFKa2}. One motivation
is to find a knot $K$ in $S^3$ whose knot Floer homology (with
$\Z$ coefficients) has torsion. Despite a rather extensive search, we
have not yet found such a knot.

Long before the discovery of knot Floer homology, Andreas Floer
proposed a construction of a knot invariant using
instantons~\cite{FloerKnots}. In~\cite{KMknotsSutures}, Kronheimer and
Mrowka conjectured an isomorphism between Floer's construction and
knot Floer homology $\HFKa$ (taken now with $\Q$ coefficients).
This would provide some link between the fundamental group and knot Floer homology.

The knot Floer homology package described here are sufficient for many computations:
calculating $\tau(K)$, $\epsilon(K)$, and $\HFa$ of surgeries on $K$. 
To understand the function $\Upsilon(K)$ and $\HFm$ of surgeries on $K$,
one needs to understand the knot Floer complex with more structure (in
effect, without the $UV=0$ specialization from
Equation~\eqref{eq:CFKwz}). To study this invariant, one must work with
a larger algebra; see~\cite{Pong}.

\bibliographystyle{plain}
\bibliography{biblio}

\end{document}